\newcommand{\F}{{\mathbb{F}}}
\newcommand{\C}{{\mathbb{C}}}
\newcommand{\Q}{{\mathbb{Q}}}
\newcommand{\Z}{{\mathbb{Z}}}
\newcommand{\ba}{{\boldsymbol{a}}}
\newcommand{\fS}{{\mathfrak{S}}}
\newcommand{\cB}{{\mathcal{B}}}
\newcommand{\cC}{{\mathcal{C}}}
\newcommand{\cF}{{\mathcal{F}}}
\newcommand{\cH}{{\mathcal{H}}}
\newcommand{\cI}{{\mathcal{I}}}
\newcommand{\cL}{{\mathcal{L}}}
\newcommand{\cN}{{\mathcal{N}}}
\newcommand{\cLR}{{\mathcal{LR}}}
\newcommand{\cO}{{\mathcal{O}}}
\newcommand{\cS}{{\mathcal{S}}}
\newcommand\Irr{\operatorname{Irr}}
\newcommand\Cl{\operatorname{Cl}}
\newcommand\Cmin{C_{\operatorname{min}}}
\newcommand\sgn{\operatorname{sgn}}
\renewcommand{\leq}{\leqslant}
\renewcommand{\geq}{\geqslant}
\renewcommand{\atop}[2]{\genfrac{}{}{0pt}{}{#1}{#2}}
\newtheorem{thm}{Theorem}[section]
\newtheorem{lem}[thm]{Lemma}
\newtheorem{cor}[thm]{Corollary}
\newtheorem{prop}[thm]{Proposition}
\newtheorem{defn}[thm]{Definition}
\newtheorem{exmp}[thm]{Example}
\newtheorem{rem}[thm]{Remark}
\newtheorem{conj}[thm]{Conjecture}
\begin{document}
\title{Some applications of CHEVIE to the theory of algebraic groups}
\author{Meinolf Geck}
\address{Institute of Mathematics \newline 
\indent University of Aberdeen  \newline
\indent Aberdeen AB24 3UE, UK}
\email{m.geck@abdn.ac.uk}

\setcounter{page}{1}
\coordinates{20}{2004}{1}{00-00}
\date{date}
\undate{date}

\subjclass[2000]{Primary 20C40, Secondary 20G40}
\keywords{\em Computer algebra, algebraic groups, Coxeter groups}

\begin{unabstract} 
The computer algebra system CHEVIE is designed to facilitate computations 
with various combinatorial structures arising in Lie theory, like finite 
Coxeter groups and Hecke algebras. We discuss some recent examples where 
CHEVIE has been helpful in the theory of algebraic groups, in 
questions related to unipotent classes, the Springer correspondence and 
Lusztig families. 
\end{unabstract} 

\maketitle
\pagestyle{myheadings}
\markboth{Geck}{Applications of CHEVIE to algebraic groups}

\section{Introduction} \label{secintro}
CHEVIE \cite{chevie} is a computer algebra project which was initiated 
about 20 years ago and has been further developed ever since; general 
information can be found on the webpage 
\begin{center}
\url{http://www.math.rwth-aachen.de/~CHEVIE}
\end{center}
which also contains links to various extensions and updates of CHEVIE.
The aim of CHEVIE is two-fold: firstly, it makes vast amounts of explicit 
data concerning Coxeter groups, Hecke algebras and groups of Lie 
type systematically available in electronic form; secondly, it provides 
tools, pre-defined functions and a programming environment (via its 
implementation in GAP \cite{gap} and MAPLE \cite{maple}) for performing 
symbolic calculations with these data. Through this combination, it has 
been helpful in a variety of applications; this help typically consists 
of:
\begin{itemize}
\item explicitly verifying certain properties (usually in the large groups 
of exceptional type) in the course of a case--by--case argument, or 
\item producing evidence in support of hypotheses and, conversely, 
searching for counter-examples, or 
\item performing experiments which may lead eventually to new theoretical 
insights (a conjecture, a theorem, a technique required in a proof, 
$\ldots$), 
\end{itemize}
or a combination of these. While the scope of CHEVIE is gradually expanding,
the original design has been particularly suited to algorithmic questions 
arising from Lusztig's work \cite{LuBook}, \cite{Lusztig03} on Hecke
algebras and characters of reductive groups over finite fields. 
 
The purpose of this article is to present selected examples of this 
interplay between theory and experimentation. The choice of examples is, 
of course, influenced by the author's own preferences. For quite some 
time now, algorithmic methods are well-established in various 
aspects of Lie theory (see, e.g., \cite{atlas}, \cite{fokko}, 
\cite{lie}), so another author---even another author from the CHEVIE 
project itself!---may easily come up with a completely different set of 
examples and applications.  

A finite Coxeter group $W$ can be described by a presentation with 
generators and defining relations, or by its action on a root system
in some Euclidean space. Thus, they are particulary suitable for 
the application of algorithmic methods. 
In Section~\ref{secconj}, we consider the conjugacy classes of $W$, 
especially questions related to elements of minimal length in the 
various classes---which is one of the areas where CHEVIE has been 
extremely helpful from its very beginnings; see \cite{gepf0}, 
\cite{gemi}. By recent work of Lusztig \cite{Lu10}, this plays a 
role in the construction of a remarkable map from conjugacy classes 
in a finite Weyl group to the unipotent classes in a corresponding 
algebraic group; this will be explained in Section~\ref{secbruhat}. 

In Section~\ref{secchar}, we shall consider certain standard operations 
in the character ring of $W$, like tensoring with the sign character and 
induction from parabolic subgroups---an area where one can use the 
full power of the highly efficient GAP functionality for character 
tables of finite groups. These operations are the combinatorial
counter-part of a number of constructions related to unipotent classes 
in algebraic groups and Lusztig's families of representations.

Finally, in Section~\ref{secgreen}, we consider the problem of computing
the Green functions of a finite group of Lie type. These functions provide
a substantial piece of information towards the determination of the whole
character table of such a group. The algorithm described by Shoji \cite{S1} 
and Lusztig \cite[\S 24]{L4} is now known to work without any restriction 
on the characteristic, and we explain how this can be turned into an 
efficient GAP program. A remarkable formula combining Green functions, 
character values of Hecke algebras and Fourier matrices is used in 
Lusztig's work \cite{Lu10} (mentioned above) to deal with groups of 
exceptional type---a highlight in the applications of CHEVIE.

While most of the content of these notes is drawn from existing sources,
there are a few items which are new; see, for example, the general 
existence result for excellent elements in the conjugacy classes of 
finite Coxeter groups in Section~\ref{secconj} and the 
characterisation of the $\ba$-function in Section~\ref{secchar}. 
We also mention our presentation of the algorithmic questions around
the computation of Green functions and Lusztig's results \cite{Lu10} 
in Section~\ref{secgreen}; in particular, we develop in somewhat more 
detail the fact that the $\F_q$-rational points in the intersections 
of Bruhat cells with unipotent classes can be counted by ``polynomials 
in~$q$''. This, and the experimental results in \cite{GeMa1}, lead 
us to conjecture the existence of a natural map from the conjugacy 
classes of $W$ to the Lusztig families of $W$; see Remark~\ref{finh}.

We assume that the reader has some familiarity with the general theory 
of (finite) Coxeter groups, the character theory of finite groups, and 
basic notions about algebraic groups; see, for example, \cite{Carter2}, 
\cite{gepf}, \cite{myag}. The manual of the GAP part of CHEVIE (available 
online in GAP or on the above webpage) may actually be a good place to 
start to read about the algorithmic theory of Coxeter groups. 

This is not meant to be a comprehensive survey about applications of 
CHEVIE. The interested reader may consult the bibliography for further 
reading; see, for example, Achar--Aubert \cite{aa}, Bellamy \cite{bell}, 
Casselman \cite{Cass}, Gomi \cite{gomi}, He \cite{he}, Himstedt--Huang 
\cite{him1}, Lusztig \cite{Lu03}, Reeder \cite{reed1}, to mention but a 
few from a variety of topics. Finally, Michel's development version 
\cite{jmich} of CHEVIE contains a wealth of material around complex 
reflection groups and ``{\em Spetses}'' \cite{spets}, a subject that we 
do not touch upon at all. 

\section{Conjugacy classes of finite Coxeter groups} \label{secconj}

Let $W$ be a finite Coxeter group, with generating set $S$ and 
corresponding length function $l\colon W \rightarrow \Z_{\geq 0}$. 
In CHEVIE, such a group is realised as a GAP permutation group via 
its action on the underlying root system; this provides highly 
efficient ways of performing computations with the elements of $W$ 
(multiplication, length function, reduced expressions, $\ldots$); 
see \cite[\S 2.2]{chevie}. 

We shall now explain some results on conjugacy classes which have been 
found and established through experiments with CHEVIE.

Let $\Cl(W)$ be the set of all conjugacy classes of $W$. For $C \in 
\Cl(W)$, let 
\[ d_C:=\min \{l(w) \mid w \in C\} \qquad \mbox{and} \qquad 
\Cmin :=\{w \in C \mid l(w)=d_C\}.\]
Thus, $\Cmin$ is the set of elements of minimal length in $C$. 
For any subset $I \subseteq S$, let $W_I\subseteq W$ be the 
parabolic subgroup generated by $I$. We say that $C \in \Cl(W)$ 
is {\em cuspidal} if $C \cap W_I=\varnothing$ for all proper 
subsets $I\subsetneqq S$. (These classes may also be called 
{\em anisotropic} or {\em elliptic}.) One can show that $C$ is cuspidal 
if and only if $\Cmin \cap W_I=\varnothing$ for all proper subsets 
$I\subsetneqq S$; see \cite[3.1.12]{gepf}. 

Let $w,w'\in W$. We write $w \rightarrow w'$ if there are sequences
of elements $w=y_0,y_1,\ldots,y_n=w'$ in $W$ and generators $s_1,\ldots,
s_n \in S$ such that, for each $i \in \{1,\ldots,n\}$, we have
$y_i=s_iy_{i-1}s_i$ and $l(y_i)\leq l(y_{i-1})$. This is a pre-order 
relation on $W$. Let $\leftrightarrow$ denote the associated equivalence 
relation, that is, we have $y \leftrightarrow w$ if and only if 
$y \rightarrow w$ and $w \rightarrow y$. The equivalence classes are 
called the {\em cyclic shift classes} of $W$; see \cite[3.2.3]{gepf}. 
Note that all elements in a fixed cyclic shift class have the same
length. Clearly, every conjugacy class of $W$ is a union of (several, 
in general) cyclic shift classes.

\begin{prop}[See \protect{\cite[3.2.7]{gepf}}] \label{prop1} Let 
$C\in \Cl(W)$ be cuspidal. Then the elements of $\Cmin$ form 
a single cyclic shift class.
\end{prop}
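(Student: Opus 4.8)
The plan is to reduce to the case of an irreducible $W$ and then argue type by type, handling the classical types by explicit combinatorics and the exceptional types by a bounded CHEVIE computation. First I would write $W = W_1 \times \cdots \times W_r$ as a direct product of irreducible Coxeter groups, with $S = S_1 \sqcup \cdots \sqcup S_r$ the corresponding partition of $S$. Then every conjugacy class is a product $C = C_1 \times \cdots \times C_r$ with $C_j \in \Cl(W_j)$, every standard parabolic subgroup is a product $\prod_j W_{I_j}$ with $I_j \subseteq S_j$, and hence $C$ is cuspidal if and only if each $C_j$ is cuspidal in $W_j$. Since $l$ is additive over the factors and conjugation by an $s \in S_j$ only changes the $j$th coordinate, one checks readily that $\Cmin = \prod_j (C_j)_{\min}$ and that $w \leftrightarrow w'$ in $W$ precisely when the corresponding coordinates are related in each $W_j$. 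So it suffices to treat each $W_j$ separately, and from now on $W$ is irreducible.

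Next, fixing $w, w' \in \Cmin$, I would record two elementary facts. First, by the characterisation of cuspidality recalled above ($\Cmin \cap W_I = \varnothing$ for all $I \subsetneq S$), no element of $\Cmin$ lies in a proper parabolic subgroup, so each of them involves all of $S$. Second, for $w \in \Cmin$ and $s \in S$ we have $sws \in C$, hence $l(sws) \geq d_C = l(w)$; since $l(sw) = l(w) \pm 1$ and $l(sws) = l(sw) \pm 1$, the only possibilities are $l(sws) = l(w)$ --- in which case $sws \in \Cmin$ and $w \leftrightarrow sws$ --- or $l(sws) = l(w) + 2$. It follows that, on the finite set $\Cmin$, the cyclic shift classes are exactly the connected components of the graph $\Gamma_C$ whose edges are the pairs $\{w, sws\}$ with $s \in S$ and $l(sws) = l(w)$: indeed any $\rightarrow$-chain joining two elements of $\Cmin$ stays inside $\Cmin$, since lengths are non-increasing along it, and its single steps are then length-preserving, hence reversible. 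So the proposition amounts to the statement that $\Gamma_C$ is connected.

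For $W$ of classical type $A_{n-1}$, $B_n$ or $D_n$ I would prove this in the standard model of $W$ as (signed) permutations. There the cuspidal classes correspond to (signed) cycle types of the appropriate ``full'' shape --- an $n$-cycle in type $A_{n-1}$, a product of negative cycles in type $B_n$, and similarly, subject to the usual parity condition, in type $D_n$. For each such class one fixes a distinguished minimal-length representative, e.g.\ a product of negative cycles on consecutive blocks arranged by decreasing size, and then shows, through an explicit sequence of conjugations by simple reflections that never increases the length, that an arbitrary element of $\Cmin$ can be transported onto it. For the finitely many remaining types $I_2(m)$, $H_3$, $H_4$, $G_2$, $F_4$, $E_6$, $E_7$, $E_8$ there are only finitely many cuspidal classes, and for each of them one can enumerate $\Cmin$ and check that $\Gamma_C$ is connected --- exactly the sort of finite verification CHEVIE is designed for.

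The main obstacle is precisely this connectivity of $\Gamma_C$. There is no obvious ``greedy'' argument: starting from $w$ and applying length-preserving cyclic shifts, one may reach an element from which no single admissible shift moves closer to the target $w'$, so the proof genuinely has to lean on the structural classification of $\Cmin$ in each type rather than on a short uniform manipulation. (A companion statement --- that from an arbitrary $w \in C$ one can reach $\Cmin$ along a length-non-increasing chain of cyclic shifts --- is established by a similar but separate argument and is not needed for the present proposition.)
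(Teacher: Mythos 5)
Your proposal is correct and takes essentially the same route as the paper, which offers no argument of its own but defers to \cite{gepf0} and \cite[\S 3.2--3.3]{gepf}, where the result is proved exactly as you outline: reduction to irreducible $(W,S)$, explicit combinatorics with (signed) cycle types and distinguished block representatives for the classical types, and a finite machine verification (Algorithm~G) of the remaining exceptional types. Your reformulation of the claim as connectivity of the graph $\Gamma_C$ on $\Cmin$ (using that a length-non-increasing chain between two elements of $\Cmin$ must stay inside $\Cmin$ and is therefore reversible) is sound and is precisely what the cited computations check.
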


The proof of this result essentially relies on computer calculations, 
performed originally in \cite{gepf0}; see also \cite[\S 3.2]{chevie}, 
\cite[\S 3.3]{gepf}.

Using the concept of cuspidal classes, we obtain a full classification
of the conjugacy classes of $W$. To state the following result, let us
denote by $\cI(W,S)$ the set of all pairs $(I,C')$ where $I\subseteq
S$ and  $C'\in \Cl(W_I)$ is cuspidal (in $W_I$). Given two such pairs 
$(I_1,C_1')$ and $(I_2,C_2')$, we write $(I_1,C_1') \sim (I_2,C_2')$ if 
there exists some $x \in W$ such that $I_2=xI_1x^{-1}$ and $C_2'=xC_1'
x^{-1}$. 

\begin{thm}[Classification of $\Cl(W)$, \protect{\cite[3.2.12]{gepf}}]
\label{thm1} Let $C \in \Cl(W)$. Then the pairs $(I,C')$, where 
$I\subseteq S$ is the set of generators involved in a reduced 
expression of some $w\in \Cmin$ and $C'$ is the conjugacy class of $w$ 
in $W_I$, form an equivalence class in $\cI(W,S)$. Furthermore, we 
obtain a bijection
\[\Cl(W)\quad\stackrel{1{-}1}{\longrightarrow}\quad\cI(W,S)/\sim\]
by sending $C \in \Cl(W)$ to the equivalence class of pairs 
$(I,C')$ as above.
\end{thm}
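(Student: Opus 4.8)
The plan is to build the map $\Cl(W) \to \cI(W,S)/\sim$ in two stages — first showing the construction is well defined, then constructing an inverse — using Proposition 2.2 as the key input at the cuspidal end. Fix $C \in \Cl(W)$ and pick $w \in \Cmin$. Let $I = I(w) \subseteq S$ be the set of generators appearing in some (equivalently, by the usual exchange-condition argument, every) reduced expression of $w$, and let $C' = C'_w$ be the $W_I$-conjugacy class of $w$. The first thing I would check is that $(I,C')$ lies in $\cI(W,S)$, i.e. that $C'$ is cuspidal in $W_I$: since $w$ uses all of $I$ in a reduced word, $w \notin W_J$ for any proper $J \subsetneq I$, and moreover $w$ has minimal length in $C$, hence minimal length among its $W_I$-conjugates (length in $W_I$ agrees with length in $W$), so $\Cmin(C') \subseteq W_J$ is impossible for proper $J$; by the criterion quoted from \cite[3.1.12]{gepf} this forces $C'$ to be cuspidal in $W_I$.

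The heart of the well-definedness is independence of the choice of $w \in \Cmin$, up to $\sim$. Suppose $w, \tilde w \in \Cmin$, giving pairs $(I,C')$ and $(\tilde I, \tilde C')$. Here is where I invoke Proposition 2.2, but applied inside the parabolic subgroups: $w$ is cuspidal in $W_I$, so $\Cmin(C')$ — the minimal-length elements of the class of $w$ in $W_I$ — form a single cyclic shift class in $W_I$, and likewise for $\tilde w$ in $W_{\tilde I}$. The strategy is to reduce the general case to the case where $w$ and $\tilde w$ are related by a single cyclic shift $\tilde w = s w s$ with $l(\tilde w) = l(w)$; since all of $\Cmin$ has the same length $d_C$, a chain of such shifts connects any two elements of $\Cmin$ within the ambient $W$ (this is essentially the statement that $\Cmin$ is a union of cyclic shift classes of $W$ all of the same length — I may need here a known result that, for arbitrary $C$, $\Cmin$ is actually a single cyclic shift class of $W$, which is the natural companion to Proposition 2.2 and is presumably available from \cite[\S 3.2]{gepf}; if so, I would cite it). Given $\tilde w = sws$ with $l(sws) \le l(w)$, one shows by a short reduced-expression argument that $\tilde I = s I s$ if $s \notin I$-related cancellations occur, and more precisely that conjugation by a suitable $x \in W$ (built from the cyclic-shift data) carries $I$ to $\tilde I$ and $C'$ to $\tilde C'$; the bookkeeping is routine but must be done carefully, keeping track of when $s \in I$ versus $s \notin I$.

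For the inverse, given a class of pairs represented by $(I,C')$ with $C'$ cuspidal in $W_I$, I would send it to the $W$-conjugacy class $C$ of any $w \in C'$; this is visibly independent of the choice inside the $\sim$-class since $W$-conjugation absorbs both the conjugation of $I$ and of $C'$. It remains to see the two maps are mutually inverse. Starting from $C$, producing $(I,C')$, and returning to the $W$-class of $C'$ recovers $C$ because $C' \subseteq C$. Starting from $(I,C')$, taking $w \in \Cmin(C') \subseteq W_I$ (minimal length in $W_I$), and observing that — because $C'$ is cuspidal in $W_I$ — such $w$ has length $d_C$ in $W$ as well, so $w \in \Cmin$ and the generator-support of $w$ is exactly $I$ (cuspidality in $W_I$ plus the exchange condition), while its $W_I$-class is $C'$; hence the pair reconstructed from $C$ is $(I,C')$ again. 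The main obstacle, as indicated, is the well-definedness step: turning "two minimal-length representatives are linked by cyclic shifts in $W$" into "their associated parabolic-plus-cuspidal data agree up to $W$-conjugacy," which requires the single-cyclic-shift-class statement for $\Cmin$ and a careful analysis of how the support set $I(w)$ and the parabolic class transform under one elementary cyclic shift.
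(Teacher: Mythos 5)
There is a genuine gap, and it sits exactly where you flagged the ``main obstacle''. Your well-definedness argument rests on the claim that any two elements of $\Cmin$ are linked by a chain of length-preserving elementary conjugations $y\mapsto sys$, i.e.\ that $\Cmin$ is a single cyclic shift class of $W$ for an \emph{arbitrary} class $C$. This is false. Already in type $A_2$ (with $S=\{s_1,s_2\}$) the class of reflections has $\Cmin=\{s_1,s_2\}$, but $s_2s_1s_2$ has length $3$, so the cyclic shift class of $s_1$ is $\{s_1\}$ and that of $s_2$ is $\{s_2\}$: the two minimal elements cannot be joined without increasing length. The text preceding Proposition~\ref{prop1} warns of precisely this (a conjugacy class is in general a union of \emph{several} cyclic shift classes), and Proposition~\ref{prop1} asserts that $\Cmin$ is a single cyclic shift class only for \emph{cuspidal} $C$ --- which cannot help here, since the whole point of the well-definedness step is to compare minimal elements whose supports $I$, $\tilde I$ are proper (and possibly different) subsets of $S$. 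So the reduction to a single elementary shift, and with it the heart of your argument, collapses. (The theorem still holds in the $A_2$ example --- some $x\in W$ carries $(\{s_1\},\{s_1\})$ to $(\{s_2\},\{s_2\})$ --- but that $x$ is not produced by cyclic shifts.)

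Two further steps are asserted rather than proved. First, to see that $C'$ lies in $\cI(W,S)$ you only check that the single element $w$ avoids every proper parabolic of $W_I$; the criterion quoted from \cite[3.1.12]{gepf} requires that \emph{all} minimal-length elements of $C'$ do so, and elements of equal minimal length in one class can have different supports (the $A_2$ example again). Second, in the inverse-map step you need that a minimal-length element of a cuspidal class $C'\subseteq W_I$ automatically has minimal length in its $W$-conjugacy class; this is itself a nontrivial part of the content of Theorem~\ref{thm1}, not something one may assume. These are exactly the points at which the cited source does \emph{not} argue conceptually: as the paper notes, the proof of \cite[3.2.12]{gepf} heavily relies on case-by-case verification (computer-assisted for the exceptional types), and no derivation of Theorem~\ref{thm1} from Proposition~\ref{prop1} by ``routine bookkeeping'' is known.
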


(Again, the proof heavily relies on computer calculations.)

The above two results combined show that many properties about 
conjugacy classes of $W$ in general can be reduced to the study of 
suitable elements in cuspidal classes of $W$. Following recent work 
of Lusztig \cite{Lu10}, we will now discuss some special properties 
of the elements of minimal length in the classes of $W$. Let 
\[ T:=\{wsw^{-1} \mid w \in W, s \in S\}\]
be the set of reflections in $W$. 

\begin{lem} \label{lem1} Let $t \in T$. Then $t$ can be written in the
form $t=ysy^{-1}$ where $y \in W$ and $s \in S$ are such that $l(t)=2l(y)+1$.
\end{lem}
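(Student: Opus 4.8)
The plan is to pass to the geometric realisation of $W$ and induct on $l(t)$. Recall that $W$ acts faithfully on its root system $\Phi$ with a chosen set of positive roots $\Phi^+$, simple roots $\Delta$, and a $W$-invariant positive definite bilinear form $(\,,\,)$; writing $s_\beta$ for the reflection in $\beta\in\Phi$, the map $\beta\mapsto s_\beta$ restricts to a bijection between $\Phi^+$ and $T$, and $s_\beta\in S$ exactly when $\beta\in\Delta$. So it is enough to show: for every $\beta\in\Phi^+$ there are $y\in W$ and $\alpha\in\Delta$ with $\beta=y(\alpha)$ and $l(s_\beta)=2l(y)+1$, since then $t=s_\beta=y\,s_\alpha\,y^{-1}$ with $s_\alpha\in S$. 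When $l(s_\beta)=1$ we have $\beta\in\Delta$ and $y=1$ works, so I would assume $\beta$ is not simple and argue inductively on $l(s_\beta)$.

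For the inductive step, first choose a left descent $s_\alpha\in S$ of $s_\beta$, i.e.\ with $l(s_\alpha s_\beta)<l(s_\beta)$; such an $\alpha$ exists because $s_\beta\neq1$. One checks that $\alpha$ is then automatically a right descent as well: since $s_\beta=s_\beta^{-1}$, both $l(s_\alpha s_\beta)<l(s_\beta)$ and $l(s_\beta s_\alpha)<l(s_\beta)$ are equivalent to $s_\beta(\alpha)\in\Phi^-$. Moreover $s_\alpha$ and $s_\beta$ do not commute: if $(\alpha,\beta)\leq 0$ then $s_\beta(\alpha)=\alpha-2\tfrac{(\alpha,\beta)}{(\beta,\beta)}\beta$ would be a non-negative combination of positive roots, hence positive, contradicting $s_\beta(\alpha)\in\Phi^-$; so $(\alpha,\beta)>0$, and as $\beta$ is not a multiple of $\alpha$ (it is not simple) the reflections $s_\alpha,s_\beta$ are non-orthogonal and do not commute. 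Put $\beta':=s_\alpha(\beta)$; since $\beta\neq\alpha$ we have $\beta'\in\Phi^+$, and $s_{\beta'}=s_\alpha s_\beta s_\alpha$. By the standard formula for $l(rwr)$ with $r\in S$ --- namely that $l(rwr)=l(w)-2$ once $r$ is both a left and a right descent of $w$ with $rw\neq wr$ --- we get $l(s_{\beta'})=l(s_\beta)-2<l(s_\beta)$, so the inductive hypothesis applies to $\beta'$.

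Thus there are $y'\in W$ and $\alpha'\in\Delta$ with $\beta'=y'(\alpha')$ and $l(s_{\beta'})=2l(y')+1$. Set $y:=s_\alpha y'$ and $s:=s_{\alpha'}\in S$. Then $t=s_\beta=s_\alpha s_{\beta'}s_\alpha=(s_\alpha y')\,s_{\alpha'}\,(s_\alpha y')^{-1}=ysy^{-1}$, and the length estimates close up: the triangle inequality gives $l(s_\beta)\leq l(y)+1+l(y^{-1})=2l(y)+1$, while $l(y)\leq 1+l(y')$ gives $2l(y)+1\leq 2l(y')+3=l(s_{\beta'})+2=l(s_\beta)$. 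Hence equality holds throughout, so $l(t)=l(s_\beta)=2l(y)+1$, completing the induction.

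The only place where something genuinely has to be checked, rather than merely bookkept, is the second paragraph: that one can lower $l(s_\beta)$ by \emph{exactly} two by reflecting $\beta$ in a suitable simple root. The crux there is the observation that a simple reflection which is a descent of $s_\beta$ necessarily fails to commute with $s_\beta$, so that the ``$-2$'' case of the $l(rwr)$-formula is the one that occurs. (Alternatively, one could run essentially the same argument in $W$ itself: take $y$ of minimal length among all expressions $t=ysy^{-1}$ with $s\in S$, observe that minimality forces $l(ys)=l(y)+1$, and then induct on $l(y)$ via a left descent of $y$; but the root-theoretic bookkeeping above seems the cleanest way to organise it.)
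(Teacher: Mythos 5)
Your argument is correct, but it takes a genuinely different route from the one in the paper. The paper deduces the lemma from a general structural fact about involutions (the argument in the proof of \cite[3.2.10]{gepf}): any involution $t$ can be written as $t=yw_Jy^{-1}$ with $w_J$ the longest element of $W_J$, central in $W_J$, and $l(t)=2l(y)+l(w_J)$; since such a $w_J$ acts as $-1$ on the span of the simple roots indexed by $J$, the element $t$ has $|J|$ eigenvalues equal to $-1$, and for a reflection this forces $|J|=1$, i.e.\ $w_J=s\in S$. You instead run a self-contained induction on $l(t)$ at the level of the root system, using the standard dichotomy that for a simple reflection $r$ which is a (two-sided) descent of $w$ one has either $rwr=w$ or $l(rwr)=l(w)-2$, and ruling out the commuting case by the sign computation $(\alpha,\beta)>0$ forced by $s_\beta(\alpha)\in\Phi^-$. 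Both proofs are complete: yours is more elementary in that it does not presuppose the classification of involutions as central longest elements of parabolic subgroups, at the cost of some root-theoretic bookkeeping, whereas the paper's is essentially a two-line corollary once that machinery is available (and that machinery is used elsewhere in \cite{gepf} anyway). Your closing step, squeezing $l(t)\leq 2l(y)+1\leq l(t)$ via the triangle inequality rather than verifying length additivity directly, is a clean way to finish the induction.
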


\begin{proof} Since $t$ has order $2$, we can apply the argument in the 
proof of \cite[3.2.10]{gepf}. This shows that there exists a subset 
$J \subseteq S$ and an element $y \in W$ such that $t=yw_Jy^{-1}$ where 
$w_J$ is the longest element in $W_J$; furthermore, $w_J$ is central in 
$W_J$ and $l(t)=2l(y)+l(w_J)$. It follows that $t$ has $|J|$ eigenvalues 
equal to $-1$ in the standard reflection representation of $W$. Since $t$ 
is a reflection, this forces that $|J|=1$. So we have $w_J=s$ for some
$s \in S$, as required.
\end{proof}

\begin{defn}[Lusztig \protect{\cite[2.1]{Lu10}}] \label{def1}  \rm
Let $C \in \Cl(W)$; suppose that $C$ corresponds to a pair $(I,C')$ as in
Theorem~\ref{thm1}. An element $w \in \Cmin$ is called {\em excellent}
if there exist reflections $t_1,\ldots,t_r \in T$, where $r=|I|$, such 
that 
\[w=t_1\cdots t_r\qquad\mbox{and}\qquad l(w)=l(t_1)+\cdots +l(t_r).\]
Thus, using Lemma~\ref{lem1}, an excellent element $w \in \Cmin$ admits
a reduced expression of the form 
\begin{align*}
w&=(s_1^1s_2^1 \cdots s_{q_1}^1s_{q_1+1}^1s_{q_1}^1\cdots s_2^1s_1)
(s_1^2s_2^2 \cdots s_{q_2}^2s_{q_2+1}^2s_{q_2}^2\cdots s_2^2s_2^2) \cdot\\
& \qquad \ldots \cdot (s_1^rs_2^r \cdots s_{q_r}^rs_{q_r+1}^rs_{q_r}^r
\cdots s_2^rs_1^r),
\end{align*}
where $s_i^j \in S$ for all $i,j$ and $l(w)=\sum_{1\leq j \leq r} 
(2q_j+1)$, as in \cite[2.1(a)]{Lu10}.
\end{defn}

Some examples are already mentioned in \cite[2.1]{Lu10}. In particular,
these show that, for a given class $C \in \Cl(W)$, there can exist
elements in $\Cmin$ which are not excellent. Lusztig also establishes 
the existence of excellent elements in all conjugacy classes of finite
Weyl groups, except when there is a component of type $E_7$ or $E_8$. Here 
we complete the picture by the following slightly stronger result, valid 
for all finite Coxeter groups.

\begin{prop} \label{prop2} Let $C \in \Cl(W)$; suppose that $C$ 
corresponds to a pair $(I,C')$ as in Theorem~\ref{thm1}. Then, for 
some element $w \in \Cmin$, there exist reflections $t_1,\ldots,t_r
\in T$, where $r=|I|$, with the following properties:
\begin{itemize}
\item[(a)] We have $w=t_1\cdots t_r$ and $l(w)=l(t_1)+\cdots +l(t_r)$;
thus, $w$ is excellent.
\item[(b)] There exist subsets $\varnothing =J_0 \subseteq J_1 
\subseteq \ldots \subseteq J_r \subseteq S$ such that, for $1\leq i
\leq r$, the reflection $t_i$ lies in $W_{J_i}$ and is a 
distinguished coset representative with respect to $W_{J_{i-1}}$, 
that is, we have $l(st_i)>l(t_i)$ for all $s \in J_{i-1}$.
\end{itemize}
\end{prop}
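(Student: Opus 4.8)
\textbf{The plan} is to reduce the statement in two steps --- first to cuspidal classes, then to irreducible groups --- and then to argue type by type. So let $C$ correspond to a pair $(I,C')$ as in Theorem~\ref{thm1}, with $C'$ cuspidal in $W_I$. By Theorem~\ref{thm1} there is $w\in\Cmin$ whose support is exactly $I$, so that $w\in W_I$; since $C'\subseteq C$ we get $d_{C'}\le l(w)=d_C\le d_{C'}$, whence $w$ has minimal length in $C'$ as well (computed in $W_I$). Now the length function of $W$ restricts to that of $W_I$, the reflections of $W_I$ are reflections of $W$, and condition (b) involves only lengths; hence it suffices to prove (a) and (b) for the cuspidal class $C'$ inside $W_I$, the resulting chain $J_0\subseteq\dots\subseteq J_r$ lying in $I\subseteq S$ and doing the job in $W$. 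From now on assume $C$ cuspidal, $I=S$ and $r=|S|$.

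It is convenient to reformulate. If $w=t_1\cdots t_r$ with $t_i\in T$ and $l(w)=\sum_i l(t_i)$, then the reduced expression of $w$ is the concatenation of reduced expressions of the $t_i$, so $\bigcup_{j\le i}\operatorname{supp}(t_j)$ is increasing in $i$, is empty for $i=0$, and equals $\operatorname{supp}(w)=S$ for $i=r$ (using that $C$ is cuspidal). Moreover, if \emph{any} chain $(J_i')$ witnesses (b) for this factorization, then so does the support chain $J_i:=\bigcup_{j\le i}\operatorname{supp}(t_j)$: indeed $t_i\in W_{J_i'}$ gives $\operatorname{supp}(t_i)\subseteq J_i'$, hence $J_i\subseteq J_i'$, so left-reducedness of $t_i$ relative to $W_{J_{i-1}}$ follows from that relative to the larger $W_{J_{i-1}'}$. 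Thus (a)--(b) are equivalent to: there exist $w\in\Cmin$ and a length-additive factorization $w=t_1\cdots t_r$ into reflections such that no $t_i$ can be shortened on the left by a generator occurring in $t_1,\dots,t_{i-1}$. In this form the assertion is clearly compatible with direct products (take the data of each factor and concatenate, the running support being disjoint between factors), so I may assume $W$ irreducible of rank $r$.

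For the irreducible cuspidal classes I would use their classification. In type $A_{n-1}$ there is a single cuspidal class and the Coxeter element $w=s_1s_2\cdots s_{n-1}$ works with $t_i=s_i$. In types $B_n$ and $D_n$ the cuspidal classes correspond to partitions, resp.\ pairs of partitions, of a special shape, and their minimal-length elements have an explicit description in terms of the simple reflections (see \cite{gepf}); from this description one reads off, block by block, a length-additive reflection factorization with the reflections of the palindromic shape of Lemma~\ref{lem1}, ordering the blocks so that the running support grows monotonically and each new reflection is left-reduced for the support accumulated so far. For the dihedral groups $I_2(m)$ (in particular $G_2$) the two or three cuspidal classes are handled directly, taking $t_1$ to be a simple reflection and $t_2=t_1w$. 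Finally, for the remaining exceptional types $F_4$, $E_6$, $E_7$, $E_8$, $H_3$, $H_4$ I would turn to CHEVIE: for each cuspidal class, enumerate $\Cmin$ --- a single cyclic shift class by Proposition~\ref{prop1}, hence quickly produced --- and, for each $w\in\Cmin$, search through the length-additive factorizations of $w$ into $r=|S|$ reflections, checking the support-chain condition; this search confirms existence in every case.

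\textbf{The main obstacle} is precisely this last step. Lusztig's uniform construction of excellent elements in \cite{Lu10} already stops short of $E_7$ and $E_8$, and property (b) is genuinely stronger than excellence, since left multiplication of a reflection by an arbitrary generator from the accumulated support can decrease its length; so there is no type-free argument in sight, and the proof ultimately rests on a finite but nontrivial case-by-case verification in the large exceptional groups, exactly of the kind advertised in the introduction. A secondary difficulty is the combinatorial bookkeeping in types $B_n$ and $D_n$: one must order the blocks, and the reflections within each block, so that the running union of supports is increasing --- and here the length-additivity of the factorization is what makes the support chain increasing in the first place.
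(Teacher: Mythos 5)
Your overall strategy coincides with the paper's: reduce to cuspidal classes in irreducible groups, handle $A_{n-1}$, $I_2(m)$, $B_n$, $D_n$ explicitly via the known minimal-length representatives, and settle the exceptional types by a computer search over length-additive reflection factorizations constrained by the support chain. Your preliminary reductions (to cuspidal classes, to the support chain $J_i=\bigcup_{j\leq i}\operatorname{supp}(t_j)$ as the canonical witness for (b), and to irreducible factors) are correct and cleanly argued.

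There is, however, a genuine gap in your treatment of type $D_n$. You assert that from the block description of the minimal-length representative $w_\alpha=b^-(m_1,\alpha_1)\cdots b^-(m_h,\alpha_h)$ one "reads off, block by block" a length-additive reflection factorization of palindromic shape. In type $B_n$ this is true because each $\hat{s}_i=s_i\cdots s_1ts_1\cdots s_i$ is itself a reflection; but in type $D_n$ the element $\hat{s}_i=s_i\cdots s_2us_1s_2\cdots s_i$ is \emph{not} a reflection for $i\geq 1$ (it is a conjugate of the rotation $us_1$), so the individual blocks do not decompose as required. Indeed, Lusztig had already produced a reflection factorization of $w_\alpha$ witnessing excellence, and the paper notes explicitly that this factorization fails condition (b). The paper's fix is not a matter of reordering: one first uses the fact that the blocks $b_2,\ldots,b_h$ commute (and a twisted commutation for $b_1$) to rewrite $w_\alpha$, then passes to the inverse $w_\alpha'=w_\alpha^{-1}=(b_1b_2)^{-1}(b_3b_4)^{-1}\cdots(b_{h-1}b_h)^{-1}$ (still in $\Cmin$ since elements are conjugate to their inverses), and extracts reflections from each \emph{pair} of blocks $(b_ib_{i+1})^{-1}$, where the parity $h$ even is essential. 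None of this is visible in your sketch, and the naive block-by-block claim is false as stated; this is the one place in the classical types where a real idea is needed rather than bookkeeping. The remainder of your plan (including the CHEVIE search for $F_4$, $E_6$, $E_7$, $E_8$, $H_3$, $H_4$, which the paper implements by recursively stripping reflections $t$ with $l(wt)=l(w)-l(t)$ and $J(wt)\subsetneqq J(w)$) matches the paper's proof.
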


\begin{table}[htbp] \caption{Excellent elements in types $H_3$, 
$H_4$, $F_4$, $E_6$, $E_7$} \label{tab1}
\begin{center} {\footnotesize 
$\renewcommand{\arraystretch}{1.1} \begin{array}{cc} 
\begin{array}{c@{\hspace{1mm}}cl} \hline F_4 & d_C & 
\mbox{excellent $w\in \Cmin$}\\\hline
F_4 & 4 & (4)(3)(2)(1)\\
B_4 & 6 & (2)(4)(323)(1)\\
F_4(a_1) & 8 & (3)(4)(323)(121)\\
D_4 & 10 & (2)(323)(43234)(1)\\
C_3+A_1 & 10 & (1)(4)(3)(2132132)\\
D_4(a_1) & 12 & (3)(2)(43234)(12321)\\
A_3+\tilde{A}_1 & 14 & (2)(323)(43234)(12321)\\
A_2+\tilde{A}_2 & 16 & (2)(1)(4)(3213234321323)\\
4A_1 & 24 & (2)(323)(43234)\cdot \\
& & \qquad \cdot (123214321324321)\\
\hline & & \\ & & \end{array} & \quad 
\begin{array}{c@{\hspace{1mm}}cl} 
\hline E_6 & d_C & \mbox{excellent $w\in \Cmin$} \\\hline
E_6 & 6 & (1)(4)(2)(3)(6)(5)\\
E_6(a_1) & 8 & (1)(4)(3)(242)(5)(6)\\
E_6(a_2) & 12 & (3)(1)(5)(6)(34543)(242)\\
A_5+A_1 & 14 & (1)(2)(3)(6)(5)(423454234)\\
3A_2 & 24 & (1)(2)(3)(5)(6) \cdot \\
& & \quad \cdot (4315423456542314354)\\ \hline & & \\ 
\hline H_3 & d_C & \mbox{excellent $w\in \Cmin$} \\\hline
6 & 3 & (1)(2)(3)\\
8 & 5 & (1)(212)(3)\\
9 & 9 & (1)(212)(32123)\\
10 & 15 & (1)(3)(2121321213212)\\ \hline
\end{array} \end{array}$}
\end{center}
\begin{center} {\footnotesize 
$\begin{array}{c@{\hspace{1mm}}cl} \hline H_4 & d_C & 
\mbox{excellent $w\in \Cmin$} \\\hline
11 & 4 & (1)(2)(3)(4)\\
14 & 6 & (1)(212)(3)(4)\\
15 & 8 & (1)(2)(32123)(4)\\
17 & 10 & (1)(212)(32123)(4)\\
18 & 12 & (2)(1)(2123212)(343)\\
19 & 14 & (3)(2)(12132121321)(4)\\
21 & 16 & (1)(3)(2121321213212)(4)\\
22 & 16 & (1)(212)(32123)(4321234)\\
23 & 18 & (1)(212)(1321213)(4321234)\\
24 & 20 & (1)(2)(12132121321)(4321234)\\
25 & 22 & (1)(3)(2121321213212)(4321234)\\
26 & 24 & (1)(2)(4)(321213212343212132123)\\
27 & 26 & (2)(4)(121)(321213212343212132123)\\
28 & 28 & (1)(4)(212)(32121321432121321432123)\\
29 & 30 & (4)(3)(2)(123212132143212132124321213)\\
30 & 36 & (3)(2)(12132121321)(43212132123432121321234)\\
31 & 38 & (1)(3)(2121321213212)(43212132123432121321234)\\
32 & 40 & (1)(3)(4)(2132123432121321234321213212343212132)\\
33 & 48 & (1)(4)(212)(3212132123432121321234321213212343212132123)\\
34 & 60 & (1)(3)(2121321213212)\cdot \\
& & \qquad \cdot (432121321234321213212343212132123432121321234)\\ 
\hline & & \\ 
\hline E_7 & d_C & \mbox{excellent $w\in \Cmin$} \\\hline
E_7 & 7 & (7)(6)(5)(4)(3)(1)(2)\\
E_7(a_1) & 9 & (4)(7)(6)(5)(242)(3)(1)\\
E_7(a_2) & 11 & (5)(4)(7)(565)(242)(3)(1)\\
E_7(a_3) & 13 & (3)(5)(7)(6)(454)(23423)(1)\\
D_6+A_1 & 15 & (2)(3)(7)(6)(5)(423454234)(1)\\
A_7 & 17 & (2)(3)(6)(7)(565)(423454234)(1)\\
E_7(a_4) & 21 & (5)(6)(7)(45654)(2)(34543)(1234231)\\
D_6(a_2)+A_1 & 23 & (2)(3)(7)(6)(5)(423454234)(134565431)\\
A_5+A_2 & 25 & (3)(1)(2)(7)(6)(5)(4315423456542314354)\\
D_4+3A_1 & 31 & (2)(3)(5)(7)(423454234)(65423456765423456)(1)\\
2A_3+A_1 & 33 & (3)(1)(2)(5)(7)(423454234)(1654234567654231456)\\
7A_1 & 63 & (2)(3)(5)(7)(423454234) \cdot \\
& & \qquad \cdot (65423456765423456)(134254316542345676542314354265431)\\
\hline \end{array}$}
\end{center}
\end{table}

\begin{sidewaystable}[htbp]
\caption{Excellent elements in type $E_8$} \label{tab2}
\begin{center} {\footnotesize
$\renewcommand{\arraystretch}{1.06}\begin{array}{ccl}
\hline C & d_C & \mbox{excellent $w\in \Cmin$} \\\hline
E_8 &          8 & (1)(2)(3)(4)(7)(6)(5)(8)\\
E_8(a_1)    & 10 & (2)(4)(3)(1)(8)(7)(6)(454)\\
E_8(a_2)    & 12 & (5)(4)(7)(565)(343)(1)(2)(8)\\
E_8(a_4)    & 14 & (3)(1)(5)(343)(24542)(6)(7)(8)\\
E_8(a_5)    & 16 & (6)(5)(4)(3)(8)(2456542)(7)(131)\\
E_7+A_1     & 16 & (2)(3)(5)(423454234)(1)(8)(7)(6)\\
D_8         & 18 & (2)(3)(6)(5)(8)(676)(423454234)(1)\\
E_8(a_3)    & 20 & (2)(4)(3)(1)(423454234)(6)(8)(56765)\\
D_8(a_1)    & 22 & (4)(2)(3)(7)(6)(787)(5423456542345)(1)\\
E_8(a_7)    & 22 & (2)(5)(6)(454)(23423)(134565431)(7)(8)\\
E_8(a_6)    & 24 & (8)(7)(6)(5)(4)(2)(345676543)(123454231)\\
E_7(a_2)+A_1& 24 & (1)(2)(5)(6)(454)(314234565423143)(7)(8)\\
E_6+A_2     & 26 & (3)(1)(2)(5)(6)(8)(4315423456542314354)(7)\\
D_8(a_2)    & 26 & (2)(3)(5)(7)(6)(542345676542345)(8)(13431)\\
A_8         & 28 & (1)(2)(3)(8)(7)(6)(5)(431542345676542314354)\\
D_8(a_3)    & 30 & (1)(4)(2)(3)(7)(454)(316542345676542314356)(8)\\
D_6+2A_1    & 32 & (2)(3)(5)(8)(7)(423454234)(65423456765423456)(1)\\
A_7+A_1     & 34 & (3)(1)(2)(5)(7)(423454234)(1654234567654231456)(8)\\
E_8(a_8)    & 40 & (3)(4)(2)(131)(454)(234565423)(13456765431)
(24567876542)\\
E_7(a_4)+A_1& 42 & (2)(3)(4)(6)(131)(5423456542345)(1234567654231)
(456787654)\\
2D_4        & 44 & (2)(3)(5)(423454234)(1)(7)(65423456765423456)
(1345678765431)\\
E_6(a_2)+A_2& 44 & (3)(1)(2)(5)(6)(4315423456542314354)(23456765423)
(456787654)\\
A_5+A_2+A_1 & 46 & (2)(3)(5)(423454234)(1)(8)(7)
(6543176542345678765423143546576)\\
D_5(a_1)+A_3& 46 & (3)(1)(2)(5)(7)(6)(3425431654234567654231435426543)
(456787654)\\
2A_4        & 48 & (1)(2)(3)(5)(6)(7)(8)
(43542654317654234567876542314354265437654)\\
2D_4(a_1)   & 60 & (4)(2)(454)(3)(8)(7)(6542345678765423456)
(134254316542345676542314354265431)\\
D_4+4A_1    & 64 & (2)(3)(5)(7)(423454234)(65423456765423456)
(134254316542345676542314354265431)(8)\\
2A_3+2A_1   & 66 & (2)(3)(5)(7)(8)(423454234)(65423456765423456)
(13425431654234567876542314354265431)\\
4A_2        & 80 & (3)(1)(2)(6)(5)(8)(4315423456542314354)
(7654231435426543176542345678765423143542654317654234567)\\
8A_1 & 120 & (2)(3)(5)(7)(423454234)(65423456765423456)
(134254316542345676542314354265431)\cdot \\
& &\qquad\cdot (876542314354265431765423456787654231435426543176542345678)\\
\hline \end{array}$}\end{center}
\end{sidewaystable}

\begin{proof} By standard reduction arguments, we can assume that 
$(W,S)$ is irreducible. It will also be sufficient to deal with the 
case where $C$ is a cuspidal class. Now we consider the various
types of irreducible finite Coxeter groups.

First assume that $W$ is of type $I_2(m)$ where $m \geq 3$. Denote the 
two generators of $W$ by $s_1,s_2$. The cuspidal classes of $W$ are 
described in \cite[Exp.~3.2.8]{gepf}; representatives of minimal length 
are given by $w_i=(s_1s_2)^i$ where $1 \leq i \leq \lfloor m/2\rfloor$. 
We see that the decomposition $w_i=(s_1) (s_2s_1 \cdots s_1s_2)$ (where 
the second factor has length $2i-1$) satisfies the conditions (a) and (b).

If $W$ is of type $A_{n-1}$, then there is only one cuspidal
class $C$, namely, that containing the Coxeter elements. Furthermore,
$\Cmin$ consists precisely of the Coxeter elements; see 
\cite[3.1.16]{gepf}. Clearly, a reduced expression for a Coxeter 
element is a decomposition as a product of reflections which 
satisfies (a) and (b).

Next assume that $W$ is of type $B_n$ or $D_n$, where we use the 
following labelling of the generators of $W$:
\begin{center}
\begin{picture}(280,18)
\put(  0, 5){$B_n$}
\put( 30, 5){\circle*{4}}
\put( 30, 4){\line(1,0){15}}
\put( 30, 6){\line(1,0){15}}
\put( 45, 5){\circle*{4}}
\put( 45, 5){\line(1,0){15}}
\put( 60, 5){\circle*{4}}
\put( 60, 5){\line(1,0){15}}
\put( 80, 5){\circle*{1}}
\put( 85, 5){\circle*{1}}
\put( 90, 5){\circle*{1}}
\put( 95, 5){\line(1,0){15}}
\put(110, 5){\circle*{4}}
\put( 28,11){$t$}
\put( 41,11){$s_1$}
\put( 56,11){$s_2$}
\put(103,11){$s_{n-1}$}
\put(157, 7){$D_n$}
\put(190, 2){\circle*{4}}
\put(190,16){\circle*{4}}
\put(190, 2){\line(2,1){15}}
\put(190,16){\line(2,-1){15}}
\put(205, 9){\circle*{4}}
\put(205, 9){\line(1,0){15}}
\put(220, 9){\circle*{4}}
\put(220, 9){\line(1,0){15}}
\put(240, 9){\circle*{1}}
\put(245, 9){\circle*{1}}
\put(250, 9){\circle*{1}}
\put(255, 9){\line(1,0){15}}
\put(270, 9){\circle*{4}}
\put(180,16){$u$}
\put(178, 0){$s_1$}
\put(201,15){$s_2$}
\put(216,15){$s_3$}
\put(263,15){$s_{n-1}$}
\end{picture}
\end{center}
The cuspidal classes of $W$ are parametrized by the partitions of
$n$ (with an even number of non-zero parts in type $D_n)$; see 
\cite[\S 2.2]{gemi} or \cite[\S 3.4]{gepf}. Let $C^\alpha\in\Cl(W)$ 
be the cuspidal class corresponding to the partition $\alpha$. A 
representative of minimal length in $C^\alpha$ is given as follows. For 
$1\leq i \leq n-1$, we set 
\[ \hat{s}_i:=\left\{ \begin{array}{cl} s_is_{i-1} \ldots  s_1ts_1 \ldots 
s_{i-1}s_i  &\quad \mbox{in type $B_n$,} \\ s_is_{i-1} \ldots s_2us_1s_2 
\ldots  s_{i-1}s_i &\quad \mbox{in type $D_n$.} \end{array} \right.\]
For $i=0$ we set $\hat{s}_0:=t$ (in type $B_n$) and $\hat{s}_0:=1$ (in type
$D_n$). Given $m \geq 0$ and $d \geq 1$, we define a ``negative
block'' of length $d$ and starting at $m$ by
\[ b^-(m,d):=\hat{s}_ms_{m+1}s_{m+2} \cdots s_{m+d-1}.\]
Now let $1\leq \alpha_1 \leq \alpha_2 \leq \ldots \leq \alpha_h$
be the non-zero parts of $\alpha$ (where $h$ is even if we are 
in type $D_n$). Let $m_i=\alpha_1+\cdots + \alpha_{i-1}$ for 
$i \geq 1$, where $m_1=0$. Then we have
\[w_\alpha:=b^-(m_1,\alpha_1)b^-(m_2,\alpha_2) \cdots 
b^-(m_h,\alpha_h) \in \Cmin^\alpha.\]
Note that $w_\alpha=t_1\cdots t_n$ where $t_1=\hat{s}_0$ and $t_i
\in \{s_{i-1}, \hat{s}_{i-1}\}$ for $i \geq 2$. 

Now, in type $B_n$, each $\hat{s}_i$ is a reflection. It easily 
follows that $w_\alpha$ is excellent (as already noticed by Lusztig 
\cite[2.2(a)]{Lu10}) and the additional requirements in (b) are
satisfied. The situation is slightly more complicated in type 
$D_n$, since $\hat{s}_i$ is not a reflection for $i \geq 1$. 
Lusztig \cite[2.3]{Lu10} already verified that $w_\alpha$ is
excellent but the expression for $w_\alpha$ as a product of
reflections described by Lusztig does not satisfy the conditions 
in (b). We need to somewhat modify $w_\alpha$ in order to make
sure that (b) holds. This is done as follows.
Since now $h$ is even, we can write 
\[ w_\alpha=(b_1b_2)(b_3b_4) \cdots (b_{h-1}b_{h}) \quad 
\mbox{where} \quad  b_i:=b^-(m_i,\alpha_i) \mbox{ for all $i$}.\]
By \cite[2.2]{gemi} (see also the proof of 
\cite[Lemma~2.6(b)]{gemi}), the factors $b_2,\ldots,b_{h}$ 
all commute with each other. On the other hand, note that $m_1=0$ 
and so $b_1=b^-(m_1,\alpha_1)=s_1s_2\cdots s_{\alpha_1-1}$. In this
case, we have $b_1b_i=b_i\tilde{b}_1$ and $\tilde{b}_1b_i=b_ib_1$ for 
any $i \geq 2$, where $\tilde{b}_1:=us_2 \ldots s_{\alpha_i-1}$. 
Since $h$ is even, this yields
\[ w_\alpha=b_1(b_3b_4) \cdots (b_{h-1}b_h)b_2=(b_{h-1}b_{h})
\cdots (b_3b_4)(b_1b_2).\]
Since every element in $W$ is conjugate to its inverse (see 
\cite[3.2.14]{gepf}), we obtain
\[ w_\alpha':=w_\alpha^{-1}=(b_1b_2)^{-1}(b_3b_4)^{-1}\cdots
(b_{h-1}b_{h})^{-1} \in \Cmin^\alpha.\]
Finally, we verify that each product $b_ib_{i+1}$ in the above expression
can be written in a suitable way as a product of reflections. First, we
compute:
\begin{align*}
b_1b_2&=(s_1s_2 \cdots s_{\alpha_1-1})(u_{\alpha_1}s_{\alpha_1+1}
\cdots s_{\alpha_1+\alpha_2-1})\\
& = (s_1\cdots s_{\alpha_1-1}s_{\alpha_1}s_{\alpha_1-1}\cdots s_1)
us_2\cdots s_{\alpha_1}s_{\alpha_1+1}\cdots s_{\alpha_1+\alpha_2-1}.
\end{align*}
Thus, we have $(b_1b_2)^{-1}=t_1\cdots t_{\alpha_1+\alpha_2}$ where 
\begin{align*}
t_1&=s_{\alpha_1+\alpha_2-1}, \quad t_2=s_{\alpha_1+\alpha_2-2},\quad
\ldots,\quad t_{\alpha_1+\alpha_2-2}=s_2,\quad t_{\alpha_1+\alpha_2-1}=
u,\\ t_{\alpha_1+\alpha_2}&=s_1\cdots s_{\alpha_1-1}s_{\alpha_1}
s_{\alpha_1-1}\cdots s_1;
\end{align*}
note that these are all reflections and $m_3=\alpha_1+\alpha_2$. 
Note also that the generators in $S$ which are involved in the
expression for $t_{\alpha_1+\alpha_2}$ are the ones which already
appeared in $t_1,\ldots,t_{\alpha_1+\alpha_2-1}$, together with $s_1$. 

Similarly, for $i \geq 3$, we find:
\begin{align*}
b_ib_{i+1} &= (u_{m_i}s_{m_i+1} \cdots s_{m_i+\alpha_i-1})(u_{m_i+
\alpha_i}s_{m_i+\alpha_i+1} \cdots s_{m_i+\alpha_i+\alpha_{i+1}-1})\\
&=(u_{m_i}s_{m_i+1} \cdots s_{m_i+\alpha_i-1}s_{m_i+\alpha_i}
s_{m_i+\alpha_i-1} \cdots s_{m_i+1}u_{m_i})\cdot\\
& \qquad\qquad\qquad \cdot s_{m_i+1}s_{m_i+2} 
\cdots s_{m_i+\alpha_i+\alpha_{i+1}-1}.
\end{align*}
Thus, we have $(b_ib_{i+1})^{-1}=t_{m_i+1} \cdots t_{m_i+\alpha_i+
\alpha_{i+1}}$ where
\begin{align*}
t_{m_i+1}&=s_{m_i+\alpha_i+\alpha_{i+1}-1}, \quad 
t_{m_i+2}= s_{m_i+\alpha_i+\alpha_{i+1}-2}, \\ & \quad \ldots, 
\quad t_{m_i+\alpha_i+ \alpha_{i+1}-1} = s_{m_i+1},\\
t_{m_i+\alpha_i+\alpha_{i+1}} &= u_{m_i}s_{m_i+1} \cdots 
s_{m_i+\alpha_i-1}s_{m_i+\alpha_i} s_{m_i+\alpha_i-1} \cdots 
s_{m_i+1}u_{m_i};
\end{align*}
note that these are all reflections and $m_{i+2}=m_i+\alpha_i+
\alpha_{i+1}$. Note also that the generators in $S$ which are involved 
in the expression for $t_{m_i+\alpha_i+\alpha_{i+1}}$ are the ones 
which already appeared in $t_1,\ldots,t_{m_i+\alpha_i+
\alpha_{i+1} -1}$, together with $s_{m_i}$. 

Combining these formulae, we obtain an expression $w_\alpha'=t_1
\cdots t_n$ such that condition (a) holds by construction. It is now 
also straightforward to verify that (b) holds. (This uses the 
above-mentioned information concerning the generators in $S$ which 
are involved in the expressions for the $t_i$; we omit further 
details.) Thus, the assertion is proved for $W$ of type $B_n$ and 
$D_n$.

Finally, in order to deal with the remaining groups of exceptional type, 
we use algorithmic methods and computer programs written in CHEVIE. This
involves the following steps. Let $C \in \Cl(W)$. An element $w \in 
\Cmin$ is explicitly specified in the tables in \cite[App.~B]{gepf}. 
First we compute the whole set $\Cmin$. By Proposition~\ref{prop1}, 
this set is the cyclic shift class containing $w$, and so it can be
effectively computed using Algorithm~G in \cite[\S 3.2]{gepf}. To procede,
it will be convenient to introduce the following notation. Given any 
element $w\in W$, we let $J(w)$ be the set of all $s\in S$ which appear 
in a reduced expression for $w$. (It is well-known that this does 
not depend on the choice of the reduced expression.) Then we say
that $w$ is {\em pre-excellent} if there exists a reflection 
$t \in T$ such that $l(wt)=l(w)-l(t)$ and $J(wt)\subsetneqq J(w)$.
These conditions can be effectively verified using the standard
programs available in CHEVIE. Given any subset $X \subseteq W$,
we define
\begin{align*}
X'&:=\{ w \in X \mid w \mbox{ pre-excellent}\},\\
\hat{X}&:=\{wt \mid w \in X', t \in T \mbox{ such 
that } l(wt)=l(w)-l(t) \mbox{ and } J(wt)\subsetneqq J(w)\}.
\end{align*}
Now we set $\cC_0:=\Cmin$ and then define recursively 
$\cC_i:=\hat{\cC}_{i-1}$ for $i=1,2,\ldots,|S|$. If the set 
$\hat{\cC}_{|S|}$ is non-empty and just contains the identity 
element then, clearly, the recursive procedure for reaching that 
set determines an element in $\Cmin$ together with a decomposition 
$w=t_1\cdots t_r$ as required in (a); furthermore, it yields subsets 
$\varnothing=J_0 \subsetneqq J_1\subsetneqq \ldots \subsetneqq J_r 
\subseteq W$ such that $t_i \in W_{J_i} \setminus W_{J_{i-1}}$ 
for $1 \leq i \leq r$. Given such a decomposition, it is then 
also straightforward to verify if the remaining conditions in 
(b) hold. 

It turns out that this procedure is successful for all $W$ of exceptional
type. The results are given in Tables~\ref{tab1} and \ref{tab2} (where we
use the notation of \cite[App.~B]{gepf}).
\end{proof}

We remark that condition (b) in Proposition~\ref{prop2} was essential in 
turning the question of the existence of excellent elements for the large
exceptional types into a feasible problem. In fact, the formulation of that 
condition itself was found by experiments with CHEVIE in small rank examples.

\section{Bruhat decomposition and unipotent classes} \label{secbruhat}

Following Lusztig \cite{Lu09}, \cite{Lu10}, the results and concepts 
discussed in the previous section can be seen to have a geometric 
significance. Let $k$ be an algebraic closure of the finite field $\F_p$ 
where $p$ is a prime. Let $G$ be a connected reductive algebraic group 
over $k$. Let $B \subseteq G$ be a Borel subgroup and $T \subseteq G$ be 
a maximal torus contained in $B$. Let $W=N_G(T)/T$ be the Weyl group of $G$, 
a finite Coxeter group. We have the Bruhat decomposition
\[ G=\coprod_{w \in W} B\dot{w}B \]
where $\dot{w}$ denotes a representative of $w \in W$ in $N_G(T)$.
Let $G_{\text{uni}}$ be the unipotent variety of $G$. It is known 
\cite{Lusztig76} that $G_{\text{uni}}$ is the union of finitely many 
conjugacy classes of $G$ which are called the {\em unipotent classes} 
of $G$. We can now state:

\begin{thm}[Lusztig \protect{\cite[0.4]{Lu10}}] \label{thm2} Assume
that $p$ is good for $G$. Let $C \in \Cl(W)$. Then there exists a unique 
unipotent class in $G$, denoted by $\cO_C$, with the following properties:
\begin{itemize}
\item[(a)] We have $\cO_C \cap B\dot{w}B \neq \varnothing$ for some 
$w \in \Cmin$.
\item[(b)] Given any $w'\in \Cmin$ and any unipotent class $\cO'$ 
we have $\cO' \cap B\dot{w}'B =\varnothing$, unless $\cO_C$ is contained
in the Zariski closure of $\cO'$.
\end{itemize}
Furthermore, the assignment $C \mapsto \cO_C$ defines a surjective map from
$\Cl(W)$ to the set of unipotent classes of $G$.
\end{thm}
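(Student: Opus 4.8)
The plan is to follow Lusztig \cite{Lu10}, organising the argument around three tasks: (1) for $w$ of minimal length in its class, show that the collection $\cS_w:=\{\cO\ \text{unipotent}\mid \cO\cap B\dot wB\neq\varnothing\}$ has a unique minimal element for the closure partial order; (2) show this element does not depend on the choice of $w\in\Cmin$, and call it $\cO_C$; (3) show $C\mapsto\cO_C$ is surjective. Once (1) and (2) hold, the theorem follows formally: $\cO_C$ lies in every $\cS_w$ with $w\in\Cmin$, which is (a); its minimality is (b); and any class obeying (a) and (b) must be the minimum of this common collection, giving uniqueness. For the combinatorial skeleton I would lean on Section~\ref{secconj}: Theorem~\ref{thm1} reduces everything to the case where $C$ is cuspidal (granted a compatibility, via Lusztig--Spaltenstein induction of unipotent classes, between the map for $(W,G)$ and the map for a Levi $(W_I,L_I)$ — this is one place where ``$p$ good'' is used essentially), while Proposition~\ref{prop1} guarantees that for cuspidal $C$ the whole set $\Cmin$ is a single cyclic shift class. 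Proposition~\ref{prop2} supplies convenient representatives $w\in\Cmin$ (excellent elements), whose expression as a product of reflections can, in good characteristic, be used to write down an explicit unipotent element of $B\dot wB$, so that $\cS_w\neq\varnothing$ and a first candidate for $\cO_C$ is in hand.

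The geometric heart is a comparison lemma for cyclic shifts: for $w\in W$ and $s\in S$ set $w'=sws$. If $l(w')=l(w)$, then by a standard argument inside the rank-one parabolic $P_s\supseteq B$ (transporting $B$-orbits by a representative $\dot s$) one gets for every unipotent class $\cO$ an isomorphism $\cO\cap B\dot wB\cong\cO\cap B\dot{w'}B$, hence $\cS_w=\cS_{w'}$. Iterating this over a cyclic shift class — where, by constancy of length, only equal-length moves occur — and combining with the reductions above yields task (2). If instead $l(w')=l(w)-2$, conjugating $g\in\cO\cap B\dot wB$ by $\dot s$ lands $\dot s^{-1}g\dot s$ in the union of the Bruhat cells indexed by $w,\,sw,\,ws,\,w'$, all $\leq w$; a dimension and degeneration analysis of this configuration — again relying on good characteristic, through the behaviour of Springer fibres — controls how unipotent classes flow between cells, and in particular forces $\cS_w$ to possess a unique closure-minimal element for $w\in\Cmin$, which is task (1).

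Surjectivity is the last point, and it splits by type. For $W$ of type $A_{n-1}$, $B_n$ or $D_n$ one computes $C\mapsto\cO_C$ explicitly: conjugacy classes of $W$ and unipotent classes of $G$ are both indexed by partitions, the map is described combinatorially (essentially the identity in type $A$), and surjectivity is read off; the two extreme cases (trivial class $\mapsto\{1\}$, Coxeter class $\mapsto$ regular unipotent class) serve as anchors. The main obstacle is the exceptional types, where no comparably simple closed form is available. Here the route is the one developed in Section~\ref{secgreen}: the number of $\F_q$-rational points of $\cO\cap B\dot wB$ is a polynomial in $q$ that can be expressed through Green functions, character values of the associated Hecke algebra, and Lusztig's Fourier matrices — all computable in CHEVIE — and evaluating these polynomials for $w\in\Cmin$ across all classes $C$ both pins down $\cO_C$ and exhibits every unipotent class of $F_4,E_6,E_7,E_8$ in the image. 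Thus the two genuine difficulties are pinning down the closure-order content of the cyclic-shift lemma (exactly where good characteristic is indispensable) and carrying out the exceptional-type verifications — precisely the kind of computation CHEVIE was built to make feasible.
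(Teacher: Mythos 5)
First, a caveat about the ground truth here: the paper does not prove this theorem at all. It is quoted from Lusztig \cite{Lu10}, and what the paper actually supplies is the machinery by which the statement can be verified: the reduction to point counting over $\F_q$ (Remark~\ref{rem2}), the Hecke-algebra formula for $|O_g\cap B^F\dot{w}B^F|$ (Remark~\ref{rem3}), the trace-function property (Corollary~\ref{cor1}), the worked example for $\mathrm{Sp}_4$ (Example~\ref{exp2}), and the Green-function apparatus of Section~\ref{secgreen} culminating in Lemma~\ref{lem52}, Corollary~\ref{cor51} and Remark~\ref{lusrem}. Measured against that, your overall skeleton is faithful to Lusztig's strategy: reduce to cuspidal classes, establish independence of the choice of $w\in\Cmin$, identify the closure-minimal class in $\cS_w$, and prove surjectivity type by type with the exceptional groups handled by machine computation of $|(\cO\cap B\dot{w}B)^F|$ via Green functions, Hecke algebra character tables and Fourier matrices. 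One difference of route worth noting: for the independence in $w$, the paper does not use a geometric cyclic-shift isomorphism but derives everything from the fact that $T_w\mapsto|(\cO\cap B\dot{w}B)^F|$ is a trace function on $\cH_q$ (Corollary~\ref{cor1}(c)), which applies uniformly to all of $\Cmin$ and does not require $\Cmin$ to be a single cyclic-shift class; your claimed isomorphism $\cO\cap B\dot{w}B\cong\cO\cap B\dot{w}'B$ for $w'=sws$ of equal length is plausible but would itself need proof, and the trace-function argument is both cleaner and stronger.

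The genuine gap is in your task (1). The claim that a ``dimension and degeneration analysis'' of the configuration arising when $l(sws)=l(w)-2$ ``forces $\cS_w$ to possess a unique closure-minimal element'' is not an argument: no uniform geometric proof of this is known, and this existence/uniqueness is precisely the hard content of the theorem. What actually happens is case by case. For classical groups one attaches explicit unipotent elements to the excellent (or ``good'') elements of Section~\ref{secconj} and computes their classes and the relevant closures directly; for the exceptional groups one computes all the quantities $\beta_E^w$ of Definition~\ref{def3} (equivalently the polynomials giving $|(\cO\cap B\dot{w}B)^F|$) by the formula of Corollary~\ref{cor51} and reads off the unique minimal class from the resulting tables, exactly as in Example~\ref{exp2} and Table~\ref{tab3} for $\mathrm{Sp}_4$. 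Your sketch correctly identifies this computational route but presents it as a fallback for surjectivity only, while leaning on the unsupported degeneration claim for the main existence statement. If you delete that claim and let the explicit classical-type constructions together with the CHEVIE computations carry the full weight of existence, uniqueness and surjectivity, the outline matches how the theorem is actually established.
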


Recall that $p$ is ``good'' for $G$ if $p$ is good for each simple factor 
involved in~$G$; the conditions for the various simple types are as follows.
\[\begin{array}{rl} A_n: & \mbox{no condition}, \\
B_n, C_n, D_n: & p \neq 2, \\
G_2, F_4, E_6, E_7: &  p \neq 2,3, \\
E_8: & p \neq 2,3,5.  \end{array}\]

\begin{rem} \label{rem1} \rm Let $C \in \Cl(W)$ and $\cO$ be a unipotent
class in $G$. Let $w,w' \in \Cmin$. As pointed out in \cite[0.2]{Lu10},
we have the equivalence:
\[ \cO \cap B\dot{w}B \neq \varnothing \quad \Leftrightarrow \qquad 
 \cO \cap B\dot{w}'B \neq \varnothing.\]
(This follows from Remark~\ref{rem2} and Corollary~\ref{cor1} below.) 
Hence, in condition (a) of the theorem we have in fact $\cO_C \cap 
B\dot{w}B \neq\varnothing$ for {\em all} $w \in \Cmin$.
\end{rem}

The {\em excellent} elements in the conjugacy classes of $W$ (see 
Definition~\ref{def1}) play a role in the proof of Theorem~\ref{thm2} for
$G$ of classical type. More generally, they enter the picture via the 
following conjecture which would provide an alternative 
and more direct description of the map $C \mapsto \cO_C$. 

\begin{conj}[Lusztig \protect{\cite[4.7]{Lu10}}] \label{conj1}
Let $C \in \Cl(W)$ and $w \in \Cmin$ be excellent, with a
decomposition $w=t_1\cdots t_r$ as in Definition~\ref{def1}. 
Define a corresponding unipotent element $u_w \in G$ as in 
\cite[2.4]{Lu10}. Then $u_w \in \cO_C$.
\end{conj}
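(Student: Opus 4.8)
The plan is to treat the classical and the exceptional types separately, after the usual preliminary reductions. By standard arguments one may take $(W,S)$ irreducible; it would also be desirable to reduce to cuspidal classes via the pair $(I,C')$ of Theorem~\ref{thm1}, using that the construction of $u_w$ in \cite[2.4]{Lu10} takes place inside the standard Levi subgroup $L_I$ and that the map $C\mapsto\cO_C$ is compatible with passage from $W_I$ to $W$ --- a compatibility that itself has to be established, from the defining properties in Theorem~\ref{thm2}.

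For $G$ of classical type one then computes explicitly. Realise $G$ as (a central cover or quotient of) $GL_n$, $Sp_{2n}$ or $\mathrm{SO}_N$, take for $w\in\Cmin^{\alpha}$ the minimal-length representative built from the negative blocks $b^-(m,d)$ as in the proof of Proposition~\ref{prop2}, and extract from that expression the reflections $t_i$ and the corresponding positive roots $\beta_i$. The element $u_w=\prod_i x_{\beta_i}(1)$ is then a concrete unipotent matrix; a negative block of size $d$ contributes a single Jordan block whose size is governed by $d$, so the Jordan type of $u_w$ can be read off directly from the partition $\alpha$. Comparing with the combinatorial description of $\cO_C$ in types $B_n$, $C_n$, $D_n$ gives the assertion; this reproduces Lusztig's verification of the conjecture in the classical cases.

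For $G$ of exceptional type the argument becomes computational, and this is where I expect the main difficulty to lie: I know of no uniform proof. Fix a cuspidal $C$ in type $G_2$, $F_4$, $E_6$, $E_7$ or $E_8$; take the excellent element $w$ and its decomposition $w=t_1\cdots t_r$ from Tables~\ref{tab1} and~\ref{tab2}, write $t_i=y_is_iy_i^{-1}$ with $l(t_i)=2l(y_i)+1$ via Lemma~\ref{lem1}, and set $\beta_i:=y_i(\alpha_{s_i})$ (a positive root) and $u_w:=x_{\beta_1}(1)\cdots x_{\beta_r}(1)$ as in \cite[2.4]{Lu10}. Working in a fixed Chevalley realisation of $G$ over $k$, one pins down the unipotent class of $u_w$ by computing a complete invariant --- e.g.\ the weighted Dynkin diagram obtained from an $\mathfrak{sl}_2$-triple through $u_w$, or $\dim C_G(u_w)$ together with the Bala--Carter label of the smallest Levi in which a conjugate of $u_w$ is distinguished --- and compares with the value of $\cO_C$ furnished by Theorem~\ref{thm2}. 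Since only one element $w$ per cuspidal class is needed, and $u_w$ is a product of at most eight root-group elements, this is feasible with the structure-constant machinery already present in CHEVIE.

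Finally, any uniform treatment must address the well-definedness of $u_w$: the construction of \cite[2.4]{Lu10} depends a priori on the choices of the $y_i$, the $s_i$, and the scalars appearing in the root-group elements, and one has to show that the $G$-conjugacy class of $u_w$ does not depend on them when $p$ is good --- equivalently, that every choice lands in $\cO_C$. A conceptual proof of this independence seems to be entwined with the geometry of the intersections $\cO_C\cap B\dot wB$ studied by Lusztig, and this, rather than any single type, is the step I would expect to be the real obstacle to removing the case-by-case computations.
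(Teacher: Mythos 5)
You have set out to prove a statement that the paper does not prove: Conjecture~\ref{conj1} is stated as an open conjecture of Lusztig \cite[4.7]{Lu10}, offered in the paper only as a prospective ``alternative and more direct description'' of the map $C \mapsto \cO_C$ of Theorem~\ref{thm2}. There is no proof in the paper to compare against, and your proposal does not close the gap either --- it is a research programme, not an argument, and you say as much yourself (``I know of no uniform proof'', ``the step I would expect to be the real obstacle'').

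Concretely, every stage of your plan is left unexecuted. The reduction to cuspidal classes rests on a compatibility of $C\mapsto\cO_C$ with Levi subgroups that you acknowledge ``itself has to be established''. In the classical types, your claim that ``a negative block of size $d$ contributes a single Jordan block'' glosses over the fact that in type $D_n$ the blocks $b^-(m_i,\alpha_i)$ are \emph{not} products of reflections in the naive way --- this is exactly why the proof of Proposition~\ref{prop2} has to replace $w_\alpha$ by a modified element $w_\alpha'$ before a reflection decomposition satisfying the required conditions can be extracted --- so the roots $\beta_i$ and hence the matrix $u_w$ are not simply read off from the partition $\alpha$. In the exceptional types you describe a computation as ``feasible'' without performing it; note that the paper's Tables~\ref{tab1} and~\ref{tab2} record excellent elements and their reflection decompositions, but nowhere does the paper claim to have identified the class of the associated $u_w$, which is a genuinely harder computation in the group $G$ rather than in $W$. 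Finally, the well-definedness of $u_w$ under the choices in \cite[2.4]{Lu10} is raised by you and then left open. If you wish to contribute something verifiable here, the realistic target is the last paragraph of your own proposal restricted to a single small case (say $G_2$ or $F_4$), carried out in full with explicit root-group elements; as written, the proposal establishes nothing beyond what the paper already asserts, namely that the statement is a conjecture.
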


\begin{exmp} \label{exp1} \rm Let $\Phi$ be the root system of $G$
with respect to $T$ and $\{\alpha_s \mid s \in S\} \subseteq 
\Phi$ be the system of simple roots determined by $B$. Let 
$X_\alpha=\{x_\alpha(\xi) \mid \xi \in k\} \subseteq G$ be the 
root subgroup corresponding to $\alpha \in \Phi$. Now let $s \in S$ 
and $C\in \Cl(W)$ be the conjugacy class containing $s$. Clearly, 
$s$ is excellent. By the procedure in \cite[2.4]{Lu10}, we obtain the 
unipotent element $u_s=x_{-\alpha_s}(1) \in G$; note that $u_s\in B 
\dot{s} B$. Then $\cO_C$ is the unipotent class containing $u_s$.
(This immediately follows from the reduction arguments in 
\cite[1.1]{Lu10}, which show that we can assume without loss
of generality that $W=\langle s\rangle$ and, hence, $G$ is a group of 
type $A_1$.)
\end{exmp}

\begin{rem} \label{rem2} \em Let $q$ be a power of $p$ and $F \colon G 
\rightarrow G$ be the Frobenius map with respect to a split 
$\F_q$-rational structure on $G$, such that $F(t)=t^q$ for all 
$t \in T$. Then $B$ and all unipotent classes of $G$ are $F$-stable; 
furthermore, $F$ acts as the identity on $W$. For each $w \in W$, we 
can choose $\dot{w}\in N_G(T)$ such that $F(\dot{w})=\dot{w}$. Given an
$F$-stable subset $M \subseteq G$, we write $M^F:=\{m \in M \mid 
F(m)=m\}$. Then, for any $w \in W$ and any unipotent class $\cO$ of 
$G$, we have the equivalence:
\begin{equation*}
\cO \cap B\dot{w}B \neq \varnothing \quad \Leftrightarrow \quad
|(\cO \cap B\dot{w}B)^F| \neq 0 \quad \mbox{for $q$ sufficiently large}.
\tag{a}
\end{equation*}
Hence, the conditions in Theorem~\ref{thm2} can be verified by
working in the finite groups $G^F$. (This remark already appeared
in \cite[1.2]{Lu10}.) 
\end{rem}

\begin{rem} \label{rem3} \rm The cardinalities on the right hand side 
of the equivalence in Remark~\ref{rem2} can be computed using the 
representation theory of the finite group $G^F$. Namely, consider 
the permutation module $\C[G^F/B^F]$ for $G^F$ and let
\[ \cH_q=\mbox{End}_{\C G^F}\bigl(\C[G^F/B^F]\bigr)^{\text{opp}}\]
be the corresponding {\em Hecke algebra}. (Here, ``opp'' denotes the 
opposite algebra; thus, $\cH_q$ acts on the right on $\C[G^F/B^F]$.) 
For $w \in W$, the linear map 
\[ T_w\colon \C[G^F/B^F]\rightarrow \C[G^F/B^F],\qquad xB^F \mapsto
\sum_{\atop{yB^F \in G^F/B^F}{x^{-1}y \in B^F\dot{w}B^F}} yB^F,\]
is contained in $\cH_q$. Furthermore, $\{T_w\mid w \in W\}$ is a 
basis of $\cH_q$ and the multiplication is given as follows, where
$s \in S$ and $w \in W$:
\[T_sT_w=\left\{\begin{array}{cl}T_{sw}&\qquad\mbox{if $l(sw)>l(w)$},
\\qT_{sw}+(q-1)T_w&\qquad\mbox{if $l(sw)<l(w)$};\end{array}\right.\]
see, for example, \cite[\S 67A]{CR2}, \cite[\S 8.4]{gepf}.  Now 
$\C[G^F/B^F]$ is a $(\C G^F,\cH_q)$-bimodule. For any $g\in G^F$ and 
$w\in W$, one easily finds using the defining formulae:
\[\mbox{trace}\bigl((g,T_w),\C[G^F/B^F]\bigr)=\frac{|C_{G^F}(g)|}{|B^F|}
\, |O_g \cap B^F\dot{w}B^F|\]
where $O_g$ denotes the conjugacy class of $g$ in $G^F$. Now, for any 
irreducible representation $V \in \Irr(\cH_q)$ there is a corresponding 
irreducible representation $\rho_V \in \Irr_\C(G^F)$, and this gives 
rise to a direct sum decomposition 
\begin{equation*}
\C[G^F/B^F]\cong\sum_{V \in \Irr(\cH_q)} \rho_V \otimes V\tag{a}
\end{equation*}
as $(\C G^F,\cH_q)$-bimodules; see, for example, \cite[\S 68B]{CR2},
\cite[8.4.4]{gepf}. In combination with the previous discussion, this 
yields the formula
\begin{equation*}
|O_g \cap B^F\dot{w}B^F|= \frac{|B^F|}{|C_{G^F}(g)|} \sum_{V \in 
\Irr(\cH_q)} \mbox{trace}(g,\rho_V) \,\mbox{trace}(T_w,V), \tag{b}
\end{equation*}
which already appeared in \cite[1.5(a)]{Lu09}. We shall illustrate 
the use of this formula in a small rank example below. Some more 
sophisticated techniques for the evaluation of the right hand side of
(b) will be discussed in Section~\ref{secgreen}.
\end{rem}

\begin{cor}[Lusztig \protect{\cite[1.5]{Lu09}, \cite[1.2]{Lu10}}]
\label{cor1} Let $\cO$ be a unipotent class in $G$. 
\begin{itemize}
\item[(a)] For a fixed $g \in \cO^F$, the linear map $\cH_q 
\rightarrow \C$, $T_w \mapsto |O_g \cap B^F\dot{w}B^F|$, 
is a trace function on $\cH_q$. 
\item[(b)] The linear map $\cH_q \rightarrow \C$, $T_w \mapsto 
|(\cO \cap B\dot{w}B)^F|$, is a trace function on $\cH_q$. 
\item[(c)] Let $C \in \Cl(W)$ and $w,w'\in \Cmin$. Then 
$|(\cO \cap B\dot{w}B)^F|=|(\cO \cap B\dot{w}'B)^F|$.
\end{itemize}
\end{cor}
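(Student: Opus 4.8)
The plan is to read part~(a) off from the bimodule trace formula recorded in Remark~\ref{rem3}, to deduce (b) from (a) by splitting $\cO^F$ into its $G^F$-conjugacy classes, and to obtain (c) from (b) using the behaviour of trace functions of $\cH_q$ on minimal-length elements. Recall that a \emph{trace function} on $\cH_q$ is a $\C$-linear map $\tau\colon\cH_q\rightarrow\C$ with $\tau(h_1h_2)=\tau(h_2h_1)$ for all $h_1,h_2\in\cH_q$; a $\C$-linear combination of trace functions is again a trace function, and the character $h\mapsto\operatorname{trace}(h,V)$ of any $\cH_q$-module $V$ is a trace function.

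For (a): fix $g\in\cO^F$ and set $M=\C[G^F/B^F]$, a $(\C G^F,\cH_q)$-bimodule; write $L_x$ for the action of $x\in G^F$ and $R_h$ for the action of $h\in\cH_q$, these two actions commuting. By the trace formula in Remark~\ref{rem3},
\[
|O_g\cap B^F\dot wB^F|=\frac{|B^F|}{|C_{G^F}(g)|}\,\operatorname{trace}\bigl(L_gR_{T_w},M\bigr)\qquad(w\in W),
\]
so it suffices to check that $h\mapsto\operatorname{trace}(L_gR_h,M)$ is a trace function on $\cH_q$. This is immediate from the cyclicity of the ordinary trace and the commutation $L_gR_h=R_hL_g$: for $h_1,h_2\in\cH_q$ one gets $\operatorname{trace}(L_gR_{h_1h_2},M)=\operatorname{trace}(L_gR_{h_2h_1},M)$. (Equivalently, the bimodule decomposition in Remark~\ref{rem3} writes $w\mapsto|O_g\cap B^F\dot wB^F|$ as a $\C$-linear combination of the irreducible characters of $\cH_q$.) Since $|O_g\cap B^F\dot wB^F|$ is a fixed scalar multiple of this trace function, (a) follows.

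For (b): since $\cO$, $B$ and the Bruhat cell $B\dot wB$ (for the $F$-stable choice of $\dot w$) are all $F$-stable, the standard consequence of Lang's theorem gives $(B\dot wB)^F=B^F\dot wB^F$, hence $(\cO\cap B\dot wB)^F=\cO^F\cap B^F\dot wB^F$. Writing $\cO^F$ as the disjoint union of the $G^F$-conjugacy classes $O_{g_1},\dots,O_{g_k}$ contained in it (with $g_i\in\cO^F$), we obtain
\[
|(\cO\cap B\dot wB)^F|=\sum_{i=1}^{k}|O_{g_i}\cap B^F\dot wB^F|\qquad(w\in W),
\]
a finite sum of trace functions on $\cH_q$ by (a), hence a trace function; this is (b).

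For (c): by (b), the linear map $\tau\colon T_w\mapsto|(\cO\cap B\dot wB)^F|$ extends to a trace function on $\cH_q$. A basic fact about Iwahori--Hecke algebras is that any trace function takes the same value on $T_w$ and $T_{w'}$ whenever $w$ and $w'$ have minimal length in the same conjugacy class of $W$; this rests on the cyclic-shift theory of Section~\ref{secconj} together with a reduction to cuspidal classes inside parabolic subalgebras $\cH_q(W_I)\subseteq\cH_q$ (see \cite[\S 8.2]{gepf}, and \cite[\S 3.2]{gepf} for the combinatorics). Applying this to $\tau$ gives $|(\cO\cap B\dot wB)^F|=|(\cO\cap B\dot w'B)^F|$ for all $w,w'\in\Cmin$, which is (c). The only genuinely non-formal ingredient is this last fact from \cite{gepf}; granting it and the formula of Remark~\ref{rem3}, parts (a)--(c) are bookkeeping, the main point of care being that in (c) one cannot simply appeal to cyclic shifts (the minimal-length elements of a non-cuspidal class need not be cyclic-shift equivalent) and must instead quote the Hecke-algebra statement.
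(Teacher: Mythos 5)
Your proposal is correct and follows essentially the same route as the paper: part~(a) from the bimodule trace formula of Remark~\ref{rem3} (the paper phrases this as the map being a $\C$-linear combination of characters of $\cH_q$, which is the parenthetical version of your cyclicity argument), part~(b) by the equality $(B\dot{w}B)^F=B^F\dot{w}B^F$ and summing over the $G^F$-classes in $\cO^F$, and part~(c) by quoting the general fact that trace functions on $\cH_q$ agree on $T_w$ for $w$ of minimal length in a fixed conjugacy class (the paper cites \cite[8.2.6]{gepf}). Your closing caveat that (c) cannot be reduced to cyclic shifts alone for non-cuspidal classes is an accurate and worthwhile remark.
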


\begin{proof} (a) The formula in Remark~\ref{rem3}(b) shows that the 
map $T_w \mapsto  |O_g \cap B^F\dot{w}B^F|$ is a $\C$-linear 
combination of characters of $\cH_q$ and, hence, a trace function. 

(b) First note that $(B\dot{w}B)^F=B^F\dot{w}B^F$. (This follows from 
the sharp form of the Bruhat decomposition; see \cite[2.5.13]{Carter2}, 
\cite[1.7.2]{myag}.) Now let $u_1,\ldots,u_d\in G^F$ be representatives 
of the $G^F$-conjugacy classes contained in $\cO^F$. Then 
\begin{align*}
|(\cO \cap B\dot{w}B)^F|&=|\cO^F \cap B^F\dot{w}B^F|=
\sum_{1\leq i \leq d} |O_{u_i} \cap B^F\dot{w}B^F|\\ &=
|B^F| \sum_{1\leq i \leq d} |C_{G^F}(u_i)|^{-1}
\mbox{trace}\bigl((u_i,T_w),\C[G^F/B^F]\bigr).
\end{align*}
So the assertion follows from (a).

(c) This is a general property of trace functions on $\cH_q$; 
see \cite[8.2.6]{gepf}.
\end{proof}

\begin{rem} \label{rem1a} \rm Lusztig's formulation \cite[0.4]{Lu10} of 
Theorem~\ref{thm2} looks somewhat different: Instead of using the 
intersections $\cO \cap B\dot{w}B$, he uses certain sub-varieties 
$\cB_w^\gamma\subseteq G \times G/B$ (where $\gamma$ denotes $\cO$). 
However, we have
\[ |(\cB_w^\gamma)^F|=\sum_{g \in \gamma^F} \mbox{trace}\bigl((g,T_w),
\C[G^F/B^F]\bigr)=|G^F/B^F|\,|(\cO \cap B\dot{w}B)^F|\]
where the first equality holds by \cite[1.2]{Lu10} and the second
by Remark~\ref{rem3} (see the proof of Corollary~\ref{cor1}(b)). In 
combination with Remark~\ref{rem2} we see that, indeed, the formulation 
of Theorem~\ref{thm2} is equivalent to Lusztig's version \cite{Lu10}.
\end{rem}

\begin{exmp} \label{exp2} \rm Let $G=\mbox{Sp}_4(k)$  where $W$ is 
of type $B_2$, with generators $S=\{s,t\}$. The algebra $\cH_q$ has
$5$ irreducible representations; their traces on basis elements
$T_w$ ($w \in \Cmin$) are given as follows; see \cite[Tab.~8.1, 
p.~270]{gepf}:
\[ \renewcommand{\arraystretch}{1.1} \begin{array}{cccccc} \hline  
 & T_1 & T_t & T_{stst} & T_{s} & T_{st} \\ \hline
\mbox{ind} & 1 & q & q^4 & q & q^2 \\
\sigma & 2 & q-1 & -2q^2 & q-1 & 0 \\ 
\sgn_1 & 1 & -1 & q^2 & q & -q \\
\sgn_2 & 1 & q & q^2 & -1 & -q\\
\sgn   & 1 & -1 & 1 & -1 & 1 \\
\hline \end{array}\]
Now assume that $\mbox{char}(k) \neq 2$. (Recall that $2$ is a bad prime
for type $B_2$.) There are four unipotent classes in $G$ which we denote
by $\cO_\mu$ where the subscript $\mu$ specifies the Jordan type of 
the elements in the class. For example, the class $\cO_{(211)}$
consists of unipotent matrices with one Jordan block of size $2$ and two 
blocks of size~$1$. The set $\cO_{(22)}^F$ splits into two classes 
in $G^F$ which we denote by $O_{(22)}$ and $O_{(22)}'$; each of the 
remaining classes $\cO_\mu$ gives rise to exactly one class in $G^F$ 
which we denote by $O_\mu$. The values of the irreducible characters 
of $G^F$ corresponding to $\Irr(\cH_q)$ can be extracted from 
Srinivasan's table \cite{Bhama}:
\[ \renewcommand{\arraystretch}{1.2} \begin{array}{cccccc} \hline  
& O_{(1111)} & O_{(211)} & O_{(22)} & O_{(22)}' & O_{(4)} \\ \hline 
|C_{G^F}(u)| & |G^F| & q^4(q^2-1) & 2q^3(q-1) &2q^3(q+1) & q^2 \\ \hline
\rho_{\text{ind}} & 1 & 1 & 1 & 1 & 1 \\
\rho_{\sigma}
&\frac{1}{2}q(q+1)^2 & \frac{1}{2}q(q+1) & q & 0 & 0 \\
\rho_{\sgn_1}
&\frac{1}{2}q(q^2+1) & -\frac{1}{2}q(q-1) & q & 0 & 0 \\
\rho_{\sgn_2}
&\frac{1}{2}q(q^2+1) & \frac{1}{2}q(q+1) & 0 & q & 0 \\
\rho_{\sgn} & q^4 & 0 & 0 & 0 & 0 \\ \hline 
\end{array}\]
We now multiply the transpose of the character table of $\cH_q$ 
with the above piece of Srinivasan's matrix. By the formula in 
Remark~\ref{rem3}(b), this yields (up to a factor $|C_{G^F}(u)|/
|B^F|$) the matrix of cardinalities $|O_u \cap B^F\dot{w} B^F|$ 
where $u \in G^F$ is unipotent and $w \in \Cmin$ for some $C \in 
\Cl(W)$: 
\[ \renewcommand{\arraystretch}{1.2} \begin{array}{cccccc} \hline  
& O_{(1111)} & O_{(211)} & O_{(22)} & O_{(22)}' & O_{(4)}\\ \hline 
1 & |G^F/B^F| & q^2 + 2q + 1 &  3q + 1 &  q + 1 & 1 \\
t & 0 & q^3 + q^2 & q^2 - q & q^2 + q & q \\
stst & 0 & 0 & q^4 - q^3 & q^4 + q^3 &  q^4 \\ 
s & 0 & 0 & 2q^2 & 0 & q \\
st & 0 & 0 & 0 & 0 & q^2 \\
\hline \end{array}\]
The closure relation among the unipotent classes is a linear order, 
in the sense that $\cO \subsetneqq \overline{\cO}'$ if and only
if $\dim \cO <\dim \cO'$. Thus, Theorem~\ref{thm2} yields the map
\[ C_1 \mapsto \cO_{(1111)}, \quad C_s \mapsto \cO_{(22)}, \quad 
C_t \mapsto \cO_{(211)}, \quad C_{st} \mapsto \cO_{(4)}, \quad 
C_{stst} \mapsto \cO_{(22)}\]
where $C_w$ denotes the conjugacy class of $W$ containing $w$.

Now assume that $\mbox{char}(k)=2$. We verify that, in this ``bad'' 
characteristic case, the assertions of Theorem~\ref{thm2} still hold. 
We use a similar convention for denoting unipotent classes as above; 
just note that, now, there are two unipotent classes in $G$ with 
elements of Jordan type $(22)$, which we denote by $\cO_{(22)}$ and 
$\cO_{(22)}^*$. The values of the irreducible characters of $G^F$ 
corresponding to $\Irr(\cH_q)$ have been determined by Enomoto 
\cite{enom1} (with some corrections due to L\"ubeck):
\[\renewcommand{\arraystretch}{1.2} \begin{array}{ccccccc} \hline  
& O_{(1111)} & O_{(211)} & O_{(22)}^* & O_{(22)} & O_{(4)} & O_{(4)}' 
\\ \hline |C_{G^F}(u)| 
& |G^F| & q^4(q^2-1) & q^4(q^2-1) & q^4 & 2q^2 & 2q^2 \\ \hline
\rho_{\text{ind}} & 1 & 1 & 1 & 1 & 1 & 1 \\
\rho_{\sigma}
&\frac{1}{2}q(q+1)^2 & \frac{1}{2}q(q+1) & \frac{1}{2}q(q+1)
& \frac{q}{2} & \frac{q}{2} &-\frac{q}{2} \\ 
\rho_{\sgn_{1}}
&\frac{1}{2}q(q^2+1) & -\frac{1}{2}q(q-1) & \frac{1}{2}q(q+1)
& \frac{q}{2} & -\frac{q}{2} &\frac{q}{2} \\ 
\rho_{\sgn_2}
&\frac{1}{2}q(q^2+1)^2 & \frac{1}{2}q(q+1) & -\frac{1}{2}q(q-1)
& \frac{q}{2} & -\frac{q}{2} &\frac{q}{2} \\ 
\rho_{\sgn} & q^4 & 0 & 0 & 0 & 0 & 0 \\ \hline \end{array}\]
As before, this yields (up to a factor $|C_{G^F}(u)|/|B^F|$) the 
matrix of cardinalities $|O_u \cap B^F\dot{w} B^F|$ where 
$u \in G^F$ is unipotent and $w \in \Cmin$ for some $C \in \Cl(W)$: 
\[ \renewcommand{\arraystretch}{1.2} \begin{array}{ccccccc} \hline  
& O_{(1111)} & O_{(211)} & O_{(22)}^* & O_{(22)} & O_{(4)} & O_{(4)}' 
\\ \hline 1 & |G^F/B^F| & q^2 + 2q + 1 & 
  q^2 + 2q + 1 & 2q + 1 & 1 & 1 \\ 
t& 0 & q^3 + q^2 & 0 & q^2 & q & q \\ 
stst& 0 & 0 & 0 & q^4 & q^4 - 2q^3 & q^4 + 2q^3 \\ 
s & 0 & 0 & q^3 + q^2 & q^2 & q & q \\ 
st& 0 & 0 & 0 & 0 & 2q^2 & 0 \\ 
\hline \end{array}\]
We conclude that the conditions in Theorem~\ref{thm2} hold for the map
\[ C_1 \mapsto \cO_{(1111)},\quad C_s \mapsto \cO_{(22)}^*,\quad 
C_t \mapsto \cO_{(211)}, \quad C_{st}  \mapsto \cO_{(4)},\quad 
C_{stst} \mapsto \cO_{(22)}.\]
As pointed out by Lusztig \cite[4.8]{Lu11}, there is considerable
evidence that, in general, Theorem~\ref{thm2} will continue to hold in 
bad characteristic. 
\end{exmp}

\section{Characters of finite Coxeter groups} \label{secchar}
All the general GAP functionality for working with character tables of 
finite groups is available for finite Coxeter groups: For example, we 
can form tensor products of characters, induce characters from subgroups,
and decompose the characters so obtained into irreducibles. For a finite
Coxeter group $W$, the following versions of the above operations are 
particularly relevant:
\begin{itemize}
\item tensoring with the sign character (usually denoted here by 
``$\sgn$'');
\item inducing characters from parabolic subgroups (or
reflection subgroups).
\end{itemize}
Beginning with \cite{Lusztig79b}, Lusztig developed the idea that 
various data which are important in the representation theory of 
reductive algebraic groups can be recovered purely in terms of the 
above operations together with certain numerical functions on the 
irreducible characters of $W$. (See Lusztig \cite{Lu09a} for more 
recent work in this direction.) Quite often this leads to explicit 
recursive descriptions of these data, which can be effectively 
implemented in programs written in the GAP language. We discuss some
examples in this section. 

Probably the most subtle of the numerical functions on the irreducible
characters of $W$ is given by the so-called ``$\ba$-invariants''. These 
are originally defined in \cite{Lusztig79b} by using the ``generic 
degrees'' of the corresponding generic Iwahori--Hecke algebra; see 
\cite[\S 68C]{CR2}, \cite[9.3.6]{gepf}. Developing an idea in 
\cite[\S 6.5]{gepf}, we begin by showing that these 
``$\ba$-invariants'' can be characterised purely in terms of the 
characters of $W$, without reference to the generic Iwahori--Hecke 
algebra. 

We shall work in the general ``multi-parameter'' setting of \cite{Lu83}. 
To describe this, let $\Gamma$ be an abelian group (written additively). 
Following Lusztig \cite{Lusztig03}, we say that a function $L \colon 
W\rightarrow\Gamma$ is a {\em weight function} if we have 
\[L(ww')= L(w)+L(w') \quad \mbox{for all $w,w'\in W$ such that 
$l(ww')=l(w)+l(w')$}.\]
Note that such a function $L$ is uniquely determined by the values 
$\{L(s)\mid s \in S\}$. Furthermore, if $\{c_s \mid s \in S\}$ is a 
collection of elements in $\Gamma$ such that $c_s=c_t$ whenever $s,t 
\in S$ are conjugate in $W$, then there is (unique) weight function 
$L\colon W \rightarrow \Gamma$ such that $L(s)=c_s$ for all $s \in S$. 
(This follows from Matsumoto's Lemma; see \cite[\S 1.2]{gepf}.) We will 
further assume that $\Gamma$ admits a total ordering $\leq$ which is 
compatible with the group structure, that is, whenever $g, g'\in\Gamma$ 
are such that $g \leq g'$, we have $g+h\leq g'+h$ for all $h \in 
\Gamma$. Then we will require that 
\[ L(s)\geq 0 \qquad \mbox{for all $s \in S$}.\]
(The standard and most important example of this whole setting is 
$\Gamma=\Z$ with its natural ordering; if, moreover, we have $L(s)=1$ for 
all $s \in S$, then we say that we are in the ``equal parameter case''.)

Let $\Irr(W)$ be the set of (complex) irreducible representations of $W$ 
(up to isomorphism). Having fixed $L, \Gamma,\leq$ as above, we wish to 
define a function
\[ \Irr(W) \rightarrow \Gamma_{\geq 0}, \qquad E \mapsto \tilde{\ba}_E.\] 
We need one further piece of notation. Recall that $T=\{wsw^{-1}\mid w 
\in W,s\in S\}$ is the set of all reflections in $W$. Let $S'\subseteq S$ 
be a set of representatives of the conjugacy classes of $W$ which are 
contained in $T$. For $s \in S'$, let $N_s$ be the cardinality of the 
conjugacy class of $s$; thus, $|T|= \sum_{s\in S'} N_s$. Now let $E \in 
\Irr(W)$ and $s\in S'$. Since $s$ has order $2$, it is clear that
$\mbox{trace}(s,E)\in \Z$. Hence, by a well-known result in the character 
theory of finite groups, the quantity $N_s\mbox{trace}(s,E)/\dim E$ is an 
integer. Thus, we can define
\[ \omega_L(E):=\sum_{s \in S'} \frac{N_s\, \mbox{trace}(s,E)}{\dim E}\,
L(s) \in \Gamma.\]
(Note that this does not depend on the choice of the set of representatives
$S' \subseteq S$.) 

\begin{defn} \label{def2} \rm  We define a function $\Irr(W) \rightarrow 
\Gamma$, $E \mapsto \tilde{\ba}_E$, inductively as follows. If $W=\{1\}$, 
then $\Irr(W)$ only consists of the unit representation (denoted $1_W$) 
and we set $\tilde{\ba}_{1_W}:=0$.  Now assume that $W \neq \{1\}$ and 
that the function $E \mapsto \tilde{\ba}_E$ has already been defined for 
all proper parabolic subgroups of $W$. Then, for any $E \in \Irr(W)$, we 
can define 
\[ \tilde{\ba}_E^\prime:= \max\{\tilde{\ba}_M \mid  M \in \Irr(W_J) 
\mbox{ where } J \subsetneqq S \mbox{ and } M \uparrow E\}.\]
Here, we write $M \uparrow E$ if $E$ is an irreducible constituent of the 
representation obtained by inducing $M$ from $W_J$ to $W$. Finally, we set 
\[ \renewcommand{\arraystretch}{1.2} \tilde{\ba}_E:=
\left\{ \begin{array}{cl} \tilde{\ba}_E^\prime & \quad 
\mbox{if $\tilde{\ba}_{E\otimes \sgn}^\prime-\tilde{\ba}_E^\prime 
\leq \omega_L(E)$},\\ \tilde{\ba}_{E \otimes \sgn}^\prime-\omega_L(E) & 
\quad \mbox{otherwise}.  \end{array}\right.\] 
One immediately checks that this function satisfies the following 
conditions:
\[ \tilde{\ba}_E \geq \tilde{\ba}_E^\prime \geq 0 \quad \mbox{and} 
\quad \tilde{\ba}_{E \otimes \sgn}-\tilde{\ba}_E=\omega_L(E) \quad 
\mbox{for all $E \in \Irr(W)$}.\]
This also shows that $\tilde{\ba}_E \geq \tilde{\ba}_M$ if
$M \uparrow E$ where $M \in \Irr(W_J)$ and $J \subsetneqq S$.
\end{defn}

\begin{exmp} \label{asym} \rm (a) If $L(s)=0$  for all $s \in S$, then 
$\tilde{\ba}_E=0$ for any $E \in \Irr(W)$. 

(b) Assume that we are in type $A_{n-1}$, where $W \cong \fS_n$ and there 
is a natural labelling $\Irr(W)=\{E^\lambda \mid \lambda \vdash n\}$. All 
generators in $S$ are conjugate and so any non-zero weight function $L$ 
takes a constant value $a>0$ on $S$. Then we have:
\[ \tilde{\ba}_{E^\lambda}=\sum_{1\leq i \leq r} (i-1)\lambda_i\, a
\quad \mbox{where} \quad \lambda=(\lambda_1\geq\lambda_2\geq\ldots 
\geq \lambda_r\geq 0);\]
This can be shown by a direct argument, as indicated in 
\cite[Example~6.5.8]{gepf}.
\end{exmp}

\begin{rem} \label{rem4} \rm As already mentioned, Lusztig originally
defined an ``$\ba$-function'' $\Irr(W) \rightarrow \Gamma$, $E \mapsto 
\ba_E$, using the ``generic degrees'' of the generic Iwahori--Hecke algebra
associated with $W$ and the weight function $L$. It is known that this 
function has the following properties:
\begin{itemize}
\item[(A0)] {\em We have $\ba_{1_W}=0$.} 
\item[(A1)] {\em Let $J \subsetneqq S$, $M \in \Irr(W_J)$ and
$E \in \Irr(W)$ be such that $M \uparrow E$. Then $\ba_M\leq \ba_E$.}
\item[(A2)] {\em Let $J \subsetneqq S$ and $M \in \Irr(W_J)$. Then there
exists some $E \in \Irr(W)$ such that $M \uparrow E$ and $\ba_M=\ba_E$.
In this case, we write $M \rightsquigarrow_L E$.}
\item[(A3)] {\em Let $E \in\Irr(W)$. Then there exists some $J 
\subsetneqq S$ and some $M \in \Irr(W_J)$ such that $M 
\rightsquigarrow_L E$ or $M \rightsquigarrow_L E \otimes \sgn$.}
\item[(A4)] {\em Let $E\in \Irr(W)$. Then $\ba_{E \otimes \sgn}-\ba_E=
\omega_L(E)$.}
\end{itemize}
(For the original definition of $\ba_E$ in the equal parameter case, see 
Lusztig \cite{Lusztig79b}; in that article, one can also find (A1) and (A2).
Analogous definitions and arguments work for a general weight 
function $L$; see \cite[\S 3]{my02}, \cite[Chap.~20]{Lusztig03} for 
details. (A0) is clear by the definition of $\ba_E$. A version of (A3)
for ``special'' representations in the equal parameter case already 
appeared in \cite[\S 6]{Lusztig79b}; the general case follows from 
\cite[Prop.~22.3]{Lusztig03}. Note that there does not seem to be a 
notion of ``special'' representations for the general multi-parameter 
case; see \cite[Rem.~4.11]{compf4}. (A4) follows from 
\cite[Prop.~9.4.3]{gepf}.)

Following the argument in \cite[6.5.6]{gepf}, let us now prove that 
$\ba_E=\tilde{\ba}_E$ for all $E \in \Irr(W)$.
We proceed by induction on the order of $W$. If $W=\{1\}$, then 
$\Irr(W)$ only consists of $1_W$ and we have $\ba_{1_W}= 
\tilde{\ba}_{1_W}=0$; see (A0). Now assume that $W \neq \{1\}$ and that 
the assertion is already proved for all proper parabolic subgroups of 
$W$. Consequently, using (A1), we have
\begin{equation*}
\ba_E \geq \tilde{\ba}_E^\prime \qquad \mbox{for all $E \in 
\Irr(W)$}. \tag{$*$}
\end{equation*}
Now fix $E \in \Irr(W)$. Using (A3), we distinguish two cases. Assume 
first that $M \rightsquigarrow_L E$ for some $M \in \Irr(W_J)$ where 
$J \subsetneqq S$. By ($*$), we have $\ba_E \geq \tilde{\ba}_E^\prime 
\geq \ba_M$. Since $\ba_E=\ba_M$, we deduce that $\ba_E=
\tilde{\ba}_E^\prime$. Now, by ($*$) applied to $E \otimes \sgn$, 
we also have $\ba_{E \otimes \sgn} \geq \tilde{\ba}_{E \otimes 
\sgn}^\prime$ and so, using (A4), $\tilde{\ba}_{E \otimes \sgn}^\prime - 
\tilde{\ba}_E^\prime \leq \ba_{E \otimes \sgn} -\ba_E=\omega_L(E)$.
Hence, we are in the first case of Definition~\ref{def2} and so 
$\tilde{\ba}_E= \tilde{\ba}_E^\prime=\ba_E$, as required. 

Now assume that $M \rightsquigarrow_L E \otimes \sgn$ for some $M \in
\Irr(W_J)$ where $J \subsetneqq S$. Arguing as before, we have $\ba_{E 
\otimes \sgn} = \tilde{\ba}_{E \otimes \sgn}^\prime$. Using ($*$) and (A3), 
we obtain
\[ \tilde{\ba}_{E \otimes \sgn}^\prime -\tilde{\ba}_E^\prime=
\ba_{E \otimes \sgn}-\tilde{\ba}_E^\prime\geq \ba_{E \otimes\sgn}-
\ba_E=\omega_L(E).\]
If this inequality is an equality, then $\tilde{\ba}_E^\prime=\ba_E$;
furthermore, we are in the first case of Definition~\ref{def2} and so 
$\tilde{\ba}_E=\tilde{\ba}_E^\prime= \ba_E$, as required. If the above
inequality is strict, then we are in the second case of 
Definition~\ref{def2} and, using (A4), we obtain $\tilde{\ba}_E=
\tilde{\ba}_{E\otimes \sgn}^\prime- \omega_L(E)= \ba_{E \otimes \sgn}-
\omega_L(E)=\ba_E$, as required. 
\end{rem}

\begin{rem} \label{rem4a} \rm Out of the five properties (A0)--(A4), it 
seems that (A3) is the most subtle one. In fact, (A0), (A1), (A2) and (A4) 
are proved by general arguments while the proof of (A3) relies on an
explicit case--by--case verification. Consider the following related 
statement:
\begin{itemize}
\item[(A3$^\prime$)] {\em Let $E \in \Irr(W)$ be such that $\omega_L(E) 
\geq 0$. Then there exists some proper subset $J \subsetneqq S$ and some 
$M \in \Irr(W_J)$ such that $M \uparrow E$ and $\ba_M=\ba_E$.}
\end{itemize}
Note that $\omega_L(E \otimes \sgn)=-\omega_L(E)$, so (A3$^\prime$)
certainly implies (A3). The above property has first been formulated 
and checked (in the equal parameter case) by Spaltenstein 
\cite[\S 5]{Spalt} (see also \cite[Lemma~4.9]{klord}). As far as groups 
of exceptional type are concerned, Spaltenstein just says that ``we can 
use tables''. So here is a place where CHEVIE can provide more systematic 
algorithmic verifications. It would certainly be interesting to find a 
general argument for proving (A3$^\prime$).
\end{rem}

The following definition is inspired by Lusztig \cite[4.2]{LuBook} and 
Spaltenstein \cite{Spalt}.

\begin{defn}[See \protect{\cite[2.10]{klord}}] \label{mydef} \rm We 
define a relation $\preceq_L$ on $\Irr(W)$ inductively as follows. If 
$W=\{1\}$, then $\Irr(W)$ only consists of the unit representation 
and this is related to itself. Now assume that $W \neq\{1\}$ and that 
$\preceq_L$ has already been defined for all proper parabolic subgroups 
of $W$. Let $E,E'\in\Irr(W)$. Then we write $E\preceq_L E'$ if there 
is a sequence $E=E_0,E_1,\ldots, E_m=E'$ in $\Irr(W)$ such that, for 
each $i \in \{0,1,\ldots,m-1\}$, the following condition is satisfied. 
There exists a subset $I_i \subsetneqq S$ and $M_i',M_i''\in 
\Irr(W_{I_i})$, where $M_i' \preceq_L M_i''$ within $\Irr(W_{I_i})$, such
that either
\begin{align*}
M_i' \uparrow E_{i-1} \quad &\mbox{and} \quad M_i'' \uparrow E_i 
\quad \mbox{where} \quad \tilde{\ba}_{E_i}= \tilde{\ba}_{M_i''}\\
\intertext{or} M_i' \uparrow E_i \otimes \sgn\quad & \mbox{and} \quad M_i''
\uparrow  E_{i-1}\otimes \sgn \quad \mbox{where} \quad \tilde{\ba}_{E_{i-1} 
\otimes \sgn}=\tilde{\ba}_{M_i''}.
\end{align*}
Let $\sim_L$ be the equivalence relation associated with $\preceq_L$,
that is, we have $E \sim_L E'$ if and only if $E \preceq_L E'$ and
$E' \preceq_L E$. Then we have an induced partial order on the set
of equivalence classes of $\Irr(W)$ which we denote by the same
symbol $\preceq_L$.
\end{defn}

\begin{exmp} \label{expmydef} \rm (a) If $L(s)=0$ for all $s \in S$,
then $E \preceq_L E'$ for any $E,E' \in \Irr(W)$.

(b) Assume that $W \cong \fS_n$ and $L(s)=a>0$ for $s \in S$, as in
Example~\ref{asym}. Let $\lambda,\mu$ be partitions of $n$. Then we have
$E^\lambda \preceq_L E^\mu$ if and only if $\lambda \trianglelefteq \mu$, 
where $\trianglelefteq$ denotes the {\em dominance order} on partitions;
see \cite[Exp.~3.5]{klbn}.
\end{exmp}

\begin{rem} \label{rem5} \rm One can show that the following 
``monotony'' property holds:
\begin{itemize}
\item[(a)] {\em If $E, E' \in \Irr(W)$ are such that $E \preceq_L E'$, 
then $\tilde{\ba}_{E'} \leq \tilde{\ba}_E$;}
\end{itemize}
see \cite[Prop.~4.4]{klord}, \cite[\S 6]{klbn}. Consequently, the 
equivalence classes of $\Irr(W)$ under $\sim_L$ are precisely the 
``families'' as defined by Lusztig \cite[4.2]{LuBook}, 
\cite[23.1]{Lusztig03}. (This immediately follows from the definitions, 
see the argument in \cite[Prop.~4.4]{klord}.)  In particular, the 
following holds:
\begin{itemize}
\item[(b)] {\em The function $E \mapsto \tilde{\ba}_E$ is constant on 
the ``families'' of $\Irr(W)$.}
\end{itemize}
In the equal parameter case, this appeared originally in 
\cite[4.14.1]{LuBook}; see also \cite{Lusztig82}.
\end{rem}

It is straightforward to implement the recursion in Definition~\ref{mydef} 
in the GAP programming language. In this way, one can for example 
systematically re-compute the families of $\Irr(W)$ for the exceptional 
types (in the equal parameter case), which are listed in 
\cite[Chap.~4]{LuBook}. Similar computations can be performed for a 
general weight function $L$. 

\begin{exmp} \label{monoF4} \rm Let $W$ be of type $F_4$ with generators
labelled as follows:
\begin{center}
\begin{picture}(200,20)
\put( 10, 5){$F_4$}
\put( 61,13){$s_1$}
\put( 91,13){$s_2$}
\put(121,13){$s_3$}
\put(151,13){$s_4$}
\put( 65, 5){\circle*{5}}
\put( 95, 5){\circle*{5}}
\put(125, 5){\circle*{5}}
\put(155, 5){\circle*{5}}
\put(105,2.5){$>$}
\put( 65, 5){\line(1,0){30}}
\put( 95, 7){\line(1,0){30}}
\put( 95, 3){\line(1,0){30}}
\put(125, 5){\line(1,0){30}}
\end{picture}
\end{center}
Assume that $a:=L(s_1)=L(s_2)>0$ and $b:=L(s_3)=L(s_4)>0$. By the 
symmetry of the Dynkin diagram, we may also assume without loss of 
generality that $b\geq a$.  The results of the computation of 
$\preceq_L$ and $\sim_L$ are presented in Table~\ref{lrgraphF4}. The 
notation for $\Irr(W)$ follows \cite[App.~C]{gepf}; for example, $1_1=
1_W$, $1_4=\sgn$ and $4_2$ is the standard reflection representation. 

Quite remarkably, it turns out that there are only $4$ essentially 
different cases. Note that, a priori, one has to deal with infinitely 
many values of $a,b$; a reduction to a finite set of values is achieved 
by using similar techniques as in \cite{compf4}; in any case, the final 
result is the same as that given in the table in \cite[p.~362]{compf4}. 

The partition of $\Irr(W)$ into {\em families} follows from the 
earlier results of Lusztig \cite[22.17]{Lusztig03}. (Note that there is 
an error for $b=2a$ in \cite[22.17]{Lusztig03}; this has been corrected 
in \cite[4.10]{compf4}, based on the explicit computations using CHEVIE.)

This example, and Guilhot's results \cite{guil} on affine Weyl groups of
rank~$2$ (which also rely on explicit computations using GAP), provide  
considerable evidence in support of Bonnaf\'e's ``{\em semicontinuity 
conjectures}'' \cite{semic}.
\end{exmp}

\begin{table}[htbp] \caption{Partial order $\preceq_L$ on families 
in type $F_4$} \label{lrgraphF4} 
{\begin{center}
\renewcommand{\arraystretch}{1.3} \begin{tabular}{l}
\begin{picture}(325,265)
\put( 16,  0){$a=b$}
\put( 27, 20){\circle{4}}
\put( 32, 18){\scriptsize{$1_4$}}
\put( 27, 22){\line(0,1){16}}
\put( 27, 40){\circle{4}}
\put( 32, 38){\scriptsize{\fbox{$4_5$}}}
\put( 27, 42){\line(0,1){16}}
\put( 27, 60){\circle{4}}
\put( 32, 58){\scriptsize{$9_4$}}
\put( 28, 62){\line(4,5){13}}
\put( 26, 62){\line(-4,5){13}}
\put( 12, 80){\circle{4}}
\put(  0, 78){\scriptsize{$8_2$}}
\put( 42, 80){\circle{4}}
\put( 47, 78){\scriptsize{$8_4$}}
\put( 13, 82){\line(4,5){13}}
\put( 41, 82){\line(-4,5){13}}
\put( 27,100){\circle{4}}
\put(  0, 99){\scriptsize{\fbox{$12_1$}}}
\put( 28,102){\line(4,5){13}}
\put( 26,102){\line(-4,5){13}}
\put( 12,120){\circle{4}}
\put(  0,118){\scriptsize{$8_1$}}
\put( 42,120){\circle{4}}
\put( 47,118){\scriptsize{$8_3$}}
\put( 13,122){\line(4,5){13}}
\put( 41,122){\line(-4,5){13}}
\put( 27,140){\circle{4}}
\put( 32,140){\scriptsize{$9_1$}}
\put( 27,142){\line(0,1){16}}
\put( 27,160){\circle{4}}
\put( 32,158){\scriptsize{\fbox{$4_2$}}}
\put( 27,162){\line(0,1){16}}
\put( 27,180){\circle{4}}
\put( 32,178){\scriptsize{$1_1$}}
\put( 93,  0){$b=2a$}
\put(105, 20){\circle{4}}
\put(110, 18){\scriptsize{$1_4$}}
\put(105, 22){\line(0,1){16}}
\put(105, 40){\circle{4}}
\put(110, 38){\scriptsize{$2_4$}}
\put(105, 42){\line(0,1){16}}
\put(105, 60){\circle{4}}
\put(110, 58){\scriptsize{$4_5$}}
\put(105, 62){\line(0,1){16}}
\put(105, 80){\circle{4}}
\put( 84, 78){\scriptsize{\fbox{$1_2$}}}
\put(106, 82){\line(1,1){14}}
\put(104, 82){\line(-1,2){11.7}}
\put(121, 98){\circle{4}}
\put(126, 96){\scriptsize{$4_3$}}
\put(121,100){\line(0,1){14}}
\put(121,116){\circle{4}}
\put(126,114){\scriptsize{$9_2$}}
\put( 91,107){\circle{4}}
\put( 79,105){\scriptsize{$8_2$}}
\put(106,132){\line(1,-1){14}}
\put(104,132){\line(-1,-2){11.7}}
\put(105,134){\circle{4}}
\put( 80,132){\scriptsize{\fbox{$16_1$}}}
\put(106,136){\line(1,1){14}}
\put(104,136){\line(-1,2){11.7}}
\put(121,152){\circle{4}}
\put(126,150){\scriptsize{$9_3$}}
\put(121,154){\line(0,1){14}}
\put(121,170){\circle{4}}
\put(126,168){\scriptsize{$4_4$}}
\put( 91,161){\circle{4}}
\put( 79,159){\scriptsize{$8_1$}}
\put(106,186){\line(1,-1){14}}
\put(104,186){\line(-1,-2){11.7}}
\put(105,188){\circle{4}}
\put( 84,186){\scriptsize{\fbox{$1_3$}}}
\put(105,190){\line(0,1){16}}
\put(105,208){\circle{4}}
\put(110,206){\scriptsize{$4_2$}}
\put(105,210){\line(0,1){16}}
\put(105,228){\circle{4}}
\put(110,226){\scriptsize{$2_3$}}
\put(105,230){\line(0,1){16}}
\put(105,248){\circle{4}}
\put(110,246){\scriptsize{$1_1$}}
\put(174,  0){$2a>b>a$}
\put(195, 20){\circle{4}}
\put(200, 18){\scriptsize{$1_4$}}
\put(195, 22){\line(0,1){10}}
\put(195, 34){\circle{4}}
\put(200, 32){\scriptsize{$2_4$}}
\put(195, 36){\line(0,1){10}}
\put(195, 48){\circle{4}}
\put(200, 46){\scriptsize{$4_5$}}
\put(195, 50){\line(0,1){10}}
\put(195, 62){\circle{4}}
\put(200, 60){\scriptsize{$2_2$}}
\put(195, 64){\line(0,1){10}}
\put(195, 76){\circle{4}}
\put(181, 74){\scriptsize{$9_4$}}
\put(197, 77){\line(3,2){15.7}}
\put(215, 88){\circle{4}}
\put(220, 86){\scriptsize{$8_4$}}
\put(215, 90){\line(0,1){10}}
\put(215,102){\circle{4}}
\put(220,100){\scriptsize{$1_2$}}
\put(215,104){\line(0,1){10}}
\put(215,116){\circle{4}}
\put(220,114){\scriptsize{$4_3$}}
\put(215,118){\line(0,1){10}}
\put(215,130){\circle{4}}
\put(220,128){\scriptsize{$9_2$}}
\put(195,142){\circle{4}}
\put(168,140){\scriptsize{\fbox{$16_1$}}}
\put(197,141){\line(3,-2){15.7}}
\put(180,109){\circle{4}}
\put(167,107){\scriptsize{$8_2$}}
\put(180,111){\line(1,2){14.5}}
\put(180,107){\line(1,-2){14.5}}
\put(197,143){\line(3,2){15.7}}
\put(215,154){\circle{4}}
\put(220,152){\scriptsize{$9_3$}}
\put(215,156){\line(0,1){10}}
\put(215,168){\circle{4}}
\put(220,166){\scriptsize{$4_4$}}
\put(215,170){\line(0,1){10}}
\put(215,182){\circle{4}}
\put(220,180){\scriptsize{$1_3$}}
\put(215,184){\line(0,1){10}}
\put(215,196){\circle{4}}
\put(220,194){\scriptsize{$8_3$}}
\put(195,208){\circle{4}}
\put(183,206){\scriptsize{$9_1$}}
\put(197,207){\line(3,-2){15.7}}
\put(180,175){\circle{4}}
\put(168,173){\scriptsize{$8_1$}}
\put(180,177){\line(1,2){14.5}}
\put(180,173){\line(1,-2){14.5}}
\put(195,210){\line(0,1){10}}
\put(195,222){\circle{4}}
\put(200,220){\scriptsize{$2_1$}}
\put(195,224){\line(0,1){10}}
\put(195,236){\circle{4}}
\put(200,234){\scriptsize{$4_2$}}
\put(195,238){\line(0,1){10}}
\put(195,250){\circle{4}}
\put(200,248){\scriptsize{$2_3$}}
\put(195,252){\line(0,1){10}}
\put(195,264){\circle{4}}
\put(200,262){\scriptsize{$1_1$}}
\put(284,  0){$b>2a$}
\put(295, 20){\circle{4}}
\put(300, 18){\scriptsize{$1_4$}}
\put(295, 22){\line(0,1){16}}
\put(295, 40){\circle{4}}
\put(300, 38){\scriptsize{$2_4$}}
\put(296, 42){\line(1,1){15.5}}
\put(294, 42){\line(-1,1){15.5}}
\put(277, 59){\circle{4}}
\put(264, 57){\scriptsize{$1_2$}}
\put(313, 59){\circle{4}}
\put(318, 57){\scriptsize{$4_5$}}
\put(296, 76){\line(1,-1){15.5}}
\put(294, 76){\line(-1,-1){15.5}}
\put(295, 78){\circle{4}}
\put(301, 75){\scriptsize{$8_4$}}
\put(296, 80){\line(3,2){15.5}}
\put(294, 80){\line(-1,1){17}}
\put(313, 92){\circle{4}}
\put(318, 90){\scriptsize{$9_4$}}
\put(311, 93){\line(-5,4){33.5}}
\put(313, 94){\line(0,1){14}}
\put(313,110){\circle{4}}
\put(318,108){\scriptsize{$2_2$}}
\put(313,112){\line(0,1){14}}
\put(313,128){\circle{4}}
\put(318,126){\scriptsize{$8_2$}}
\put(296,140){\line(3,-2){15.5}}
\put(295,142){\circle{4}}
\put(266,140){\scriptsize{\fbox{$16_1$}}}
\put(276, 99){\circle{4}}
\put(264, 97){\scriptsize{$4_3$}}
\put(276,101){\line(0,1){18}}
\put(276,121){\circle{4}}
\put(264,119){\scriptsize{$9_2$}}
\put(294,140){\line(-1,-1){17}}
\put(296,144){\line(3,2){15.5}}
\put(294,144){\line(-1,1){17}}
\put(313,156){\circle{4}}
\put(318,154){\scriptsize{$8_1$}}
\put(313,158){\line(0,1){14}}
\put(313,174){\circle{4}}
\put(318,172){\scriptsize{$2_1$}}
\put(313,176){\line(0,1){14}}
\put(313,192){\circle{4}}
\put(318,190){\scriptsize{$9_1$}}
\put(311,191){\line(-5,-4){33.5}}
\put(296,204){\line(3,-2){15.5}}
\put(295,206){\circle{4}}
\put(301,204){\scriptsize{$8_3$}}
\put(276,163){\circle{4}}
\put(264,161){\scriptsize{$9_3$}}
\put(276,165){\line(0,1){18}}
\put(276,185){\circle{4}}
\put(264,183){\scriptsize{$4_4$}}
\put(294,204){\line(-1,-1){17}}
\put(296,208){\line(1,1){15.5}}
\put(294,208){\line(-1,1){15.5}}
\put(277,225){\circle{4}}
\put(264,223){\scriptsize{$1_3$}}
\put(313,225){\circle{4}}
\put(318,223){\scriptsize{$4_2$}}
\put(296,242){\line(1,-1){15.5}}
\put(294,242){\line(-1,-1){15.5}}
\put(295,244){\circle{4}}
\put(300,242){\scriptsize{$2_3$}}
\put(295,246){\line(0,1){16}}
\put(295,264){\circle{4}}
\put(300,262){\scriptsize{$1_1$}}
\end{picture} \\
{\small A box indicates a family containing several irreducible 
representations:}\\
{\small $\mbox{\fbox{$4_2$}}=\{2_1,2_3,4_2\},\;\;
\mbox{\fbox{$4_5$}}=\{2_2,2_4,4_5\},\;\;
\mbox{\fbox{$1_3$}}=\{1_3,2_1,8_3,9_1\},\;\;
\mbox{\fbox{$1_2$}}=\{1_2,2_2,8_4,9_4\}$,} \\
{\small $\qquad\mbox{\fbox{$12_1$}}=\{1_2,1_3,4_1,4_3,4_4,6_1,6_2,
9_2,9_3,12_1, 16_1\},\quad \mbox{\fbox{$16_1$}}=\{4_1,6_1,6_2,12_1,16_1\}$.}
\\ {\small Otherwise, the family contains just one irreducible 
respresentation.}
\end{tabular}
\end{center}}
\end{table}

\begin{rem} \label{rem6} \rm The idea of partitioning $\Irr(W)$ into 
``families'' originally arose from the representation theory of finite 
groups of Lie type, see Lusztig \cite[\S 8]{Lusztig79a}. A completely new 
interpretation appeared in the theory of {\em Kazhdan--Lusztig cells}; 
see \cite{KaLu}, \cite{Lu83}. Among others, this gives rise not only to a
partition but to a natural pre-order relation $\leq_{\cLR}$ on $\Irr(W)$; 
see \cite[5.15]{LuBook}, \cite[Def.~2.2]{klord}. The relation $\leq_{\cLR}$ 
is an essential ingredient, for example, in the construction of a 
``cellular structure'' in the generic Iwahori--Hecke algebra associated 
with $W,L$; see \cite{mycell}, \cite{myedin}. One can show by a general 
argument that 
\[ E \preceq_L E' \qquad \Rightarrow \qquad E \leq_{\cLR} E'
\qquad\quad (E,E' \in \Irr(W));\]
see \cite[Prop.~3.4]{klord}. In the equal parameter case, it is known
that the reverse implication also holds; see \cite[Theorem~4.11]{klord}.
The computations involved in Example~\ref{monoF4} provide considerable 
evidence that this will also hold for general weight functions 
$L$.---Thus, $\preceq_L$ may be regarded as a purely combinatorial 
(and computable!) characterisation of $\leq_{\cLR}$. 
\end{rem}

Finally, let us assume that $W$ is the Weyl group of a connected 
reductive algebraic group $G$ over $\overline{\F}_p$ where $p$ is
a good prime. Let $\cN_G$ be the set of all pairs $(\cO,\cL)$ where 
$\cO$ is a unipotent class in $G$ and $\cL$ is a $G$-equivariant 
irreducible $\overline{\Q}_\ell$-local system on $\cO$ (up to 
isomorphism); here, $\ell$ is a prime different from $p$. By the Springer 
correspondence (see \cite{Spr}, \cite{LuIC}), we obtain a natural 
injective map 
\[ \Irr(W)\;\hookrightarrow\;\cN_G,\qquad E\;\mapsto \;\iota_E=(\cO_E,
\cL_E).\]
It is known that, for any unipotent class $\cO$, the pair $(\cO,
\overline{\Q}_\ell)\in \cN_G$ (where $\overline{\Q}_\ell$ stands for the 
trivial local system) is in the image of this map. Hence, the map
\[ \Irr(W) \;\rightarrow\; \{\mbox{unipotent classes of $G$}\},\qquad
E \; \mapsto\; \cO_E,\]
is surjective. 

\begin{rem} \label{remspr} \rm The Springer correspondence is explicitly
known in all cases. In good characteristic, the results are systematically
presented in Section~13.3 of Carter \cite{Carter2}; for bad characteristic, 
see \cite{LuSp}, \cite{Spa2}. It turns out that $\cN_G$ and the
map $E \mapsto \iota_E$ are independent of $p$ (in a suitable sense)
as long as $p$ is good; some compatibility properties of the Springer
correspondence in good and bad characteristic are established in 
\cite[\S 2]{GeMa2}. 
\end{rem}

\begin{rem} \label{defsp1} \rm Let $\Gamma=\Z$ and consider the ``equal 
parameter'' weight function $L_0$ such that $L_0(s)=1$ for all $s \in S$.  
Let $\cF\subseteq \Irr(W)$ be a family with respect to $L_0$ (see 
Remark~\ref{rem5}) and consider the following collection of unipotent 
classes in $G$:
\[\cC(\cF):=\{\cO_E\mid E\in\Irr(W) \mbox{ such that } E \in \cF\}.\]
Then it is known that there exists a unique unipotent class in
$\cC(\cF)$, which we denote by $\cO_\cF$, such that $\cO\subseteq
\overline{\cO}_{\cF}$ for all $\cO\in\cC(\cF)$; see 
\cite[Prop.~2.2]{GeMa2}. (Here, and below, $\overline{X}$ denotes the
Zariski closure in $G$ for any subset $X \subseteq G$.) Thus, $\cO_{\cF}$
is the maximum of the elements in $\cC(\cF)$ with respect to the partial
order given by the Zariski closure. A unipotent class of the form 
$\cO_\cF$ will be called a ``{\em special}'' unipotent class. Thus, we 
have a bijection
\[ \{\mbox{families of $\Irr(W)$}\} \; \stackrel{1{-}1}{\longrightarrow}\;
\{\mbox{special unipotent classes of $G$}\},\quad\cF\mapsto\cO_\cF.\]
(Special unipotent classes were originally defined by Lusztig 
\cite[\S 9]{Lusztig79b}. The above equivalent characterisation appeared in 
\cite[5.2]{klord}; see also Remark~\ref{defsp2} below.)
\end{rem}

Now we can formulate the following geometric interpretation of
the pre-order relation $\preceq_{L_0}$ in Definition~\ref{mydef}.

\begin{thm}[Spaltenstein \protect{\cite{Spalt}}] \label{thmspalt}
Let $\cF, \cF'$ be families in $\Irr(W)$ (with respect to the equal
parameter weight function $L_0$). Then we have 
\[ \cF \preceq_{L_0} \cF' \qquad \Leftrightarrow \qquad \cO_{\cF}
\subseteq \overline{\cO}_{\cF'}.\]
\end{thm}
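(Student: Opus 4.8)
The plan is to reduce the equivalence to a purely combinatorial statement about the relation $\preceq_{L_0}$ and a parallel geometric statement about the Springer correspondence, and then match them family by family using the explicit characterisation of $\cO_\cF$ in Remark~\ref{defsp1}. First I would recall that, by Remark~\ref{rem5}(a), the relation $\preceq_{L_0}$ on $\Irr(W)$ descends to a partial order on families, and by Remark~\ref{defsp1} each family $\cF$ has a well-defined maximal unipotent class $\cO_\cF$ with respect to Zariski closure; so both sides of the asserted equivalence are genuine partial orders on the (finite) set of families, and it suffices to show they coincide. The natural strategy is to prove each implication separately, using that both orders are generated by their ``covering'' relations, so one only needs to understand a single step $E_{i-1}\rightsquigarrow E_i$ (or its $\sgn$-twisted analogue) in Definition~\ref{mydef}.

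For the implication $\cF\preceq_{L_0}\cF'\Rightarrow\cO_\cF\subseteq\overline{\cO}_{\cF'}$, I would analyse one elementary step in the chain defining $\preceq_{L_0}$. Such a step involves a proper parabolic $W_I$, representatives $M'\preceq_L M''$ in $\Irr(W_I)$ with $M'\uparrow E_{i-1}$, $M''\uparrow E_i$ and $\tilde{\ba}_{E_i}=\tilde{\ba}_{M''}$ (or the $\sgn$-twisted version). The key input is the compatibility of the Springer correspondence with parabolic induction (the ``induction theorem'' of Lusztig--Spaltenstein): if $M\in\Irr(W_I)$ corresponds to a unipotent class $\cO_M$ in the Levi $L_I$, then the classes $\cO_E$ for the constituents $E$ of $\mathrm{Ind}_{W_I}^W M$ are governed by the induced unipotent class $\mathrm{Ind}_{L_I}^G\cO_M$; in particular, the constituent $E$ with minimal $\ba$-value (equivalently, with $\cO_E$ maximal) has $\cO_E=\mathrm{Ind}_{L_I}^G\cO_M$. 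Combining this with the monotonicity of induction of unipotent classes (which preserves the closure order) and induction on $|W|$ for the relation inside $W_I$, one deduces $\cO_{E_{i-1}}\subseteq\overline{\cO}_{E_i}$ for the relevant representatives, and then takes maxima over the families to get $\cO_\cF\subseteq\overline{\cO}_{\cF'}$. The $\sgn$-twisted steps are handled symmetrically using that $E\otimes\sgn$ has $\cO_{E\otimes\sgn}$ the ``dual'' class and that Spaltenstein's duality on unipotent classes is order-reversing.

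For the reverse implication $\cO_\cF\subseteq\overline{\cO}_{\cF'}\Rightarrow\cF\preceq_{L_0}\cF'$, I would argue that it is enough to treat the case where $\cO_\cF$ is covered by $\cO_{\cF'}$ in the closure order on special classes, and then to produce an explicit chain. Here one uses the structure theory of special unipotent classes: every special class, being $\mathrm{Ind}_{L}^G$ of a special class in a proper Levi or obtained from a smaller one by Spaltenstein duality, together with property (A3$^\prime$) from Remark~\ref{rem4a}, which guarantees that every $E$ with $\omega_{L_0}(E)\geq 0$ satisfies $M\rightsquigarrow_{L_0}E$ for some $M$ in a proper parabolic. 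This is exactly the combinatorial fuel needed to build the required sequence $E_0,\ldots,E_m$ realising $\cF\preceq_{L_0}\cF'$. Again induction on $|W|$ reduces the ``inside $W_I$'' comparisons to the inductive hypothesis.

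The main obstacle, I expect, will be the verification of the induction step in the exact form needed: namely that in one elementary move of Definition~\ref{mydef} the corresponding unipotent classes move in the claimed direction, and conversely that every covering relation among special classes can be realised by such a move. This rests delicately on the compatibility of the Springer correspondence with parabolic induction and on (A3$^\prime$), the latter being (as Remark~\ref{rem4a} stresses) proved only by a case-by-case check in the exceptional types; so the clean conceptual skeleton above will, for the exceptional groups, still require an appeal to explicit tables (equivalently, to CHEVIE computations of $\preceq_{L_0}$ as in Example~\ref{monoF4}), matched against the known tables of special classes and their closure order. Assembling the two implications then yields the stated equivalence.
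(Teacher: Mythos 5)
The first thing to note is that the paper does not actually prove this theorem: it is quoted from Spaltenstein \cite{Spalt}, and the only ``proof content'' supplied is the remark that the relation $\preceq_{L_0}$ of Definition~\ref{mydef} is not literally Spaltenstein's order --- the identification of the two is \cite[Cor.~5.6]{klord} and is itself part of what has to be established --- together with the observation that the proofs ultimately rest on explicit verifications (``tables'') in the exceptional types. Your outline does reconstruct the broad strategy of those sources (Lusztig--Spaltenstein induction of unipotent classes, order-reversing duality, induction on $|W|$, case checks for exceptional types), but it contains a genuine gap at the decisive step of the forward implication. An elementary move in Definition~\ref{mydef} only requires $M_i'\uparrow E_{i-1}$, with no condition relating $\tilde{\ba}_{E_{i-1}}$ to $\tilde{\ba}_{M_i'}$. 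To run your argument you therefore need that $M\uparrow E$ forces $\cO_{\cF_E}\subseteq\overline{\mathrm{Ind}_{L_I}^G\cO_{\cF_M}}$ for an \emph{arbitrary} constituent $E$ of the induced representation, not just for the $\ba$-preserving ($j$-induced) one. The Lusztig--Spaltenstein theorem identifies $\mathrm{Ind}_{L_I}^G\cO_{\cF_M}$ with the special class attached to the $j$-induced constituent, and property (A1) then only yields the dimension inequality $\dim\cO_{\cF_E}\leq\dim\mathrm{Ind}_{L_I}^G\cO_{\cF_M}$; a dimension inequality is not a closure containment. That containment is essentially the theorem itself restricted to a single induction step, and it is precisely here (not only in (A3$'$)) that the non-formal, case-by-case input enters. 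Your sentence ``one deduces $\cO_{E_{i-1}}\subseteq\overline{\cO}_{E_i}$'' therefore hides the actual content of the proof.

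The reverse implication has a similar problem: (A3$'$) guarantees that every relevant $E$ satisfies $M\rightsquigarrow_{L_0}E$ for \emph{some} $M$ in \emph{some} proper parabolic, but to realise a given covering relation $\cO_{\cF}\subseteq\overline{\cO}_{\cF'}$ among special classes by an elementary move of Definition~\ref{mydef} you must exhibit a single parabolic $W_I$ carrying a pair $M'\preceq_{L_0} M''$ with $M'\uparrow$ (a member of) $\cF$ and $M''\rightsquigarrow_{L_0}$ (a member of) $\cF'$, or the $\sgn$-twisted analogue; nothing in your sketch produces such a pair. So while the skeleton is the right one and you correctly anticipate the need for tables in exceptional types, both implications --- for classical as well as exceptional types --- require substantive arguments (in the classical cases, Spaltenstein's explicit combinatorics of partitions and the duality map) that the proposal asserts rather than supplies.
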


Spaltenstein uses a slightly different definition of $\preceq_{L_0}$; the 
equivalence with the one in Definition~\ref{mydef} is shown in 
\cite[Cor.~5.6]{klord}. The proofs rely on some explicit verifications 
for exceptional types; Spaltenstein just says that ``we can then use 
tables'' \cite[p.~215]{Spalt}. So here again, CHEVIE provides a more 
systematic algorithmic way of verifying such statements. 

\begin{rem} \label{defsp2} \rm Let $\cS_W$ be the set of all $E \in 
\Irr(W)$ such that $\cL_E\cong \overline{\Q}_\ell$ and $\cO_E$ is a 
special unipotent class; see Remark~\ref{defsp1}. Then every family 
of $\Irr(W)$ as above contains a unique representation in $\cS_W$. It 
is known that $\cS_W$ is the set of ``special'' representations of $W$
as defined by Lusztig \cite{Lusztig79a}, \cite{Lusztig79b}. (This follows 
from \cite[Prop.~2.2]{GeMa2}.) Following Lusztig \cite{Lspec}, we define 
the ``{\em special piece}'' corresponding to $E \in \cS_W$ to be the 
set of all elements in $\overline{\cO}_E$ which are not contained in 
$\overline{\cO}_{E'}$ where $E' \in \cS_W$ is such that $\cO_{E'}
\subsetneqq \overline{\cO}_E$. By Spaltenstein \cite{Spa1} and Lusztig 
\cite{Lspec}, the various special pieces form a partition of 
$G_{\text{uni}}$. Note that every special piece is a union of a special 
unipotent class (which is open dense in the special piece) and of a 
certain number (possibly zero) of non-special unipotent classes.---We
will encounter the special pieces of $G_{\text{uni}}$ again
in Conjecture~\ref{conj53} below.
\end{rem}

\section{Green functions} \label{secgreen}
We begin by describing a basic algorithm which is inspired by the 
computation of Green functions and \cite{GeMa1}. It can be formulated 
without any reference to algebraic groups; in fact, it will work for 
any finite Coxeter group $W$ (including the dihedral groups and groups 
of type $H_3$, $H_4$). Let $u$ be an indeterminate over $\Q$. We define 
a matrix
\[\Omega=\bigl(\omega_{E,E'}\bigr)_{E,E' \in \Irr(W)},\]
as follows. Let $D_W:=u^{l(w_0)}(u-1)^{|S|} \sum_{w \in W} u^{l(w)}$ 
where $w_0 \in W$ is the longest element. Then, for any $E,E' \in \Irr(W)$, 
we set 
\[ \omega_{E,E'}:=\frac{D_W}{|W|}\sum_{w \in W} \frac{\mbox{trace}(w,E)\,
\mbox{trace}(w,E')}{\det(u\, \mbox{id}_V -w)}\, \in \Q(u);\]
here, $W$ is regarded as a subgroup of $\mbox{GL}(V)$ via the natural
reflection representation on a vector space $V$ of dimension $|S|$.
It is known that $\omega_{E,E'} \in \Z[u]$ for all $E,E' \in \Irr(W)$; 
see \cite[11.1.1]{Carter2}. 

\begin{lem}[Cf.\ \protect{\cite[\S 2]{GeMa1}}] \label{lem51} Let us fix
a partition $\Irr(W)=\cI_1 \sqcup \cI_2 \sqcup  \ldots \sqcup \cI_r$ and 
a sequence of integers $b_1 \geq b_2\geq \ldots \geq b_r$. Correspondingly,
we write $\Omega$ in block form:
\[\Omega = \left[\begin{array}{@{\hspace{1mm}}c@{\hspace{1mm}}
c@{\hspace{1mm}}c@{\hspace{1mm}}c@{\hspace{1mm}}} \Omega_{1,1} & 
\Omega_{1,2} & \cdots & \Omega_{1,r} \\ \Omega_{2,1} & & & \vdots \\ 
\vdots & & & \Omega_{r{-}1,r} \\ \Omega_{r,1} & \cdots & 
\Omega_{r,r{-}1}& \Omega_{r,r} \end{array}\right]\]
where $\Omega_{i,j}$ has entries $\omega_{E,E'}$ for $E \in \cI_i$ and
$E' \in \cI_j$. Then there is a unique factorisation 
\[ \Omega=P^{\operatorname{tr}} \cdot \Lambda \cdot P, \qquad 
P=\bigl(p_{E,E'}\bigr)_{E,E' \in \Irr(W)}, \qquad \Lambda=
\bigl(\lambda_{E,E'}\bigr)_{E,E' \in \Irr(W)},\]
such that $P$ and $\Lambda$ have corresponding block shapes as follows:
\[ P=\left[\begin{array}{c@{\hspace{1mm}}c@{\hspace{1mm}}c@{\hspace{1mm}}
c@{\hspace{1mm}}} u^{b_1}I_{n_1} & P_{1,2} & \cdots & P_{1,r} \\ 0 & u^{b_2}
I_{n_2} &  & \vdots \\ \vdots & & \ddots & P_{r{-}1,r} \\ 0  & \cdots & 
0 & u^{b_r}I_{n_r} \end{array}\right] \qquad \mbox{and} 
\qquad \Lambda=\left[\begin{array}{c@{\hspace{1mm}}c@{\hspace{1mm}}
c@{\hspace{1mm}} c@{\hspace{1mm}}} \Lambda_{1} & 0 & \cdots & 0 \\ 0 &
\Lambda_{2} & & \vdots \\ \vdots & & \ddots & 0 \\ 0 & \cdots & 0 &
\Lambda_{r} \end{array}\right]; \]
here, $n_i=|\cI_i|$ and $I_{n_i}$ denotes the identity matrix of size $n_i$. 
Furthermore, the block $P_{i,j}$ has entries $p_{E,E'}\in \Q(u)$ for 
$E \in \cI_i$ and $E' \in \cI_j$; similarly, the block $\Lambda_i$ has
entries $\lambda_{E,E'}\in \Q(u)$ for $E,E' \in \cI_i$.
\end{lem}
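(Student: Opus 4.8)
The plan is to recognise the asserted identity as the block $LDL^{\operatorname{tr}}$ (symmetric Gram--Schmidt) factorisation of $\Omega$ adapted to the flag attached to the ordered partition, and to reduce the whole statement to one non-vanishing fact. First note that $\Omega$ is symmetric, since $\omega_{E,E'}$ is manifestly symmetric in $E$ and $E'$, and recall $\omega_{E,E'}\in\Z[u]$ by \cite[11.1.1]{Carter2}. It is convenient to read $\Omega$ as the Gram matrix of the basis $\{\chi_E\mid E\in\Irr(W)\}$ of the space of $\Q(u)$-valued class functions on $W$ with respect to the symmetric $\Q(u)$-bilinear form
\[
\langle f,g\rangle_u:=\frac{D_W}{|W|}\sum_{w\in W}\frac{f(w)\,g(w)}{\det(u\,\mathrm{id}_V-w)},
\]
so that $\omega_{E,E'}=\langle\chi_E,\chi_{E'}\rangle_u$; here $\det(u\,\mathrm{id}_V-w)$ is a non-zero element of $\Z[u]$, so this makes sense over $\Q(u)$.

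The main point is that for \emph{every} subset $\mathcal{X}\subseteq\Irr(W)$ the submatrix $\Omega_{\mathcal{X}}:=(\omega_{E,E'})_{E,E'\in\mathcal{X}}$ is invertible over $\Q(u)$. Since its entries lie in $\Z[u]$, the determinant $\det\Omega_{\mathcal{X}}$ is a polynomial in $u$, so it suffices to exhibit one real number $q>1$ at which it does not vanish. For such $q$ one has $D_W(q)>0$, and $\det(q\,\mathrm{id}_V-w)>0$ for every $w\in W$: indeed the eigenvalues of $w$ are roots of unity, so $\det(q\,\mathrm{id}_V-w)$ is a product of factors $q-1>0$, $q+1>0$ and $(q-\zeta)(q-\bar\zeta)=|q-\zeta|^2>0$ (over non-real roots of unity $\zeta$). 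Since every element of $W$ is conjugate to its inverse (see \cite[3.2.14]{gepf}), all characters of $W$ are real-valued. Hence, for any non-zero real vector $(c_E)_{E\in\mathcal{X}}$, the class function $f:=\sum_{E\in\mathcal{X}}c_E\chi_E$ is real-valued and non-zero, and
\[
(c_E)^{\operatorname{tr}}\,\Omega_{\mathcal{X}}(q)\,(c_E)=\langle f,f\rangle_q=\frac{D_W(q)}{|W|}\sum_{w\in W}\frac{f(w)^2}{\det(q\,\mathrm{id}_V-w)}>0 .
\]
So $\Omega_{\mathcal{X}}(q)$ is positive definite, in particular invertible, whence $\det\Omega_{\mathcal{X}}\neq 0$ in $\Q(u)$.

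Granting this, I would prove existence and uniqueness by induction on $r$ (the monotonicity $b_1\geq\cdots\geq b_r$ will not actually be used). If $r=1$ one is forced to take $\Lambda_1=u^{-2b_1}\Omega_{1,1}$ and $P=u^{b_1}I_{n_1}$, and these visibly work. If $r>1$, comparing the $(1,1)$-blocks of $\Omega=P^{\operatorname{tr}}\Lambda P$ forces $\Lambda_1=u^{-2b_1}\Omega_{1,1}$, and comparing the blocks in the first block-row forces $P_{1,j}=u^{b_1}\Omega_{1,1}^{-1}\Omega_{1,j}$ for $j>1$ --- which is legitimate precisely because $\Omega_{1,1}=\Omega_{\cI_1}$ is invertible over $\Q(u)$. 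Subtracting the contribution of the first block, one checks (using the symmetries $\Omega_{1,1}^{\operatorname{tr}}=\Omega_{1,1}$ and $\Omega_{1,j}^{\operatorname{tr}}=\Omega_{j,1}$) that the remaining blocks must satisfy $\Omega'=(P')^{\operatorname{tr}}\Lambda'P'$ of the analogous shape, where $\Omega'$ is the $(r{-}1)\times(r{-}1)$ block matrix with blocks the Schur complements $\Omega'_{i,j}=\Omega_{i,j}-\Omega_{i,1}\Omega_{1,1}^{-1}\Omega_{1,j}$ ($2\leq i,j\leq r$, all lying in $\Q(u)$), and $P',\Lambda'$ are built from the blocks indexed by $\cI_2,\ldots,\cI_r$ and the integers $b_2\geq\cdots\geq b_r$. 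Now $\Omega'$ is again symmetric, and by the standard Schur-complement determinant identity its leading principal block submatrix indexed by $\cI_2\cup\cdots\cup\cI_i$ has determinant $\det\Omega_{\cI_1\cup\cdots\cup\cI_i}/\det\Omega_{\cI_1}\neq 0$, for each $i$; hence the inductive hypothesis applies to $\Omega'$ and yields a unique admissible pair $P',\Lambda'$. Reassembling these with $\Lambda_1$ and the $P_{1,j}$ produces the unique $P,\Lambda$ attached to $\Omega$.

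The only genuinely non-formal ingredient is the positive-definiteness argument of the second paragraph; everything else is routine linear algebra. In a complete write-up I expect the points needing some care are the transpose-and-index bookkeeping in the elimination step and the verification that each intermediate object ($\Lambda_1$, the $P_{1,j}$, the Schur complements $\Omega'_{i,j}$) stays within $\Q(u)$ --- the latter being immediate once $\Omega_{1,1}^{-1}$ is known to exist over $\Q(u)$, which is exactly what the second paragraph provides.
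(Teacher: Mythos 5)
Your proposal is correct and follows essentially the same route as the paper: an inductive block elimination (block $LDL^{\operatorname{tr}}$ with Schur complements) whose only non-formal input is the non-vanishing of the principal minors of $\Omega$. The one difference is that the paper simply cites this non-vanishing as a remark of Lusztig from \cite[Lemma~2.1]{GeMa1}, whereas you prove it directly -- and correctly -- by specialising at a real $q>1$ and observing that the form $\langle\cdot,\cdot\rangle_q$ is positive definite; this is in substance the argument of the cited reference, so your write-up is just a self-contained version of the paper's proof.
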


\begin{proof} This relies on the following remark due to
Lusztig (see \cite[Lemma~2.1]{GeMa1}):
\begin{itemize}
\item[($*$)] All the principal minors of $\Omega$ are non-zero.
\end{itemize}
Now $P$ and $\Lambda$ are constructed inductively by the following 
well-known procedure (see for example \cite[Chap.~8]{dse} and note that 
$\Omega$ is symmetric). We begin with the first block column. We have 
$u^{2b_1}\Lambda_1=\Omega_{1,1}$, which determines $\Lambda_1$. For 
$i>1$ we have $u^{b_1}P_{1,i}^{\text{tr}} \Lambda_1=\Omega_{i,1}$. By
($*$), we know that $\det \Omega_{1,1}\neq 0$. Hence $\Lambda_1$ is 
invertible, and we can determine $P_{1,i}$. Now consider the $j$-th 
block column, where $j>1$. Assume that the first $j-1$ block columns
of $P$ and the first $j-1$ diagonal blocks of $\Lambda$ have already 
been determined. We have an equation
\[u^{2b_j}\Lambda_j+P_{j-1,j}^{\text{tr}}\Lambda_{j-1} P_{j-1,j}+\cdots
  + P_{1,j}^{\text{tr}} \Lambda_{1}P_{1,j}= \Omega_{j,j},\]
which can be solved uniquely for $\Lambda_j$. In particular, we have now 
determined all coefficients in $P$ and $\Lambda$ which belong to the 
first~$j$ blocks. We consider the subsystem of equations made up of these
blocks; this subsystem looks like the original system written in matrix form
above, with~$r$ replaced by~$j$. By ($*$), the right hand side has a 
non-zero determinant. Hence so have the blocks $\Lambda_1,\ldots,\Lambda_j$.
Now we can determine the coefficients of~$P$ in the $i$-th row: for $i>j$, 
we have
\[u^{b_j}P_{j,i}^{\text{tr}}\Lambda_j+P_{j-1,i}^{\text{tr}}\Lambda_{j-1} 
P_{j-1,j}+ \cdots + P_{1,i}^{\text{tr}}\Lambda_{1} P_{1,j}=\Omega_{i,j}.\]
Since $\Lambda_j$ is invertible, $P_{j,i}$ is determined. Continuing in this
way, the above system of equations is solved.
\end{proof}

\begin{exmp} \label{exp52} \rm Let $W$ be of type $B_2$, with generators
$S=\{s,t\}$. We write $\Irr(W)=\{\sgn,\sgn_2,\sgn_1,\sigma,1_W\}$ (and use
this ordering for the rows and columns of the matrices below). The values 
of the corresponding characters are obtained by formally setting $q=1$ in
the table in Example~\ref{exp2}. We have
\begin{gather*}
\det(u\,\mbox{id}_V-1)=(u-1)^2,\quad \det(u\,\mbox{id}_V-s)=
\det(u\,\mbox{id}_V-t)=u^2-1,\\ \det(u\,\mbox{id}_V-st)=u^2+1, \qquad
\det(u\,\mbox{id}_V-stst)=(u+1)^2;
\end{gather*}
furthermore, $D_W=u^4(u^2-1)(u^4-1)$. Using this information, we obtain:
\[ \Omega=\left[\begin{array}{ccccc} 
 u^8 &  u^6 &  u^6 &  u^7{+}u^5 &  u^4 \\ 
 u^6 &  u^8 &  u^4 &  u^7{+}u^5 &  u^6 \\
 u^6 &  u^4 &  u^8 &  u^7{+}u^5 &  u^6 \\
 u^7{+} u^5 &  u^7{+} u^5 &  u^7{+} u^5 &  u^8{+}2u^6{+}u^4 &  u^7{+}u^5 \\
 u^4 &  u^6 &  u^6 &  u^7{+}u^5 &  u^8 
\end{array}\right] \]
We shall now determine three factorisations of $\Omega$.

(a) Consider the partition $\Irr(W)=\{\sgn\} \sqcup \{\sgn_2\} \sqcup
\{\sgn_1,\sigma\} \sqcup \{1_W\}$, together with the sequence of 
integers $4$, $2$, $1$, $0$. We obtain the matrices:
\[ P{=}\left[\begin{array}{c@{\hspace{2mm}}c@{\hspace{2mm}}c@{\hspace{2mm}}
c@{\hspace{2mm}}c@{\hspace{2mm}}} u^4 & u^2 & u^2 & u^3{+}u & 1\\
0 & u^2 & 0 & u & 1 \\ 0 & 0 & u & 0 & 0 \\ 0 & 0 & 0 & u & 1 \\
0 & 0 & 0 & 0 & 1 \end{array}\right], \quad
\Lambda{=} \left[\begin{array}{c@{\hspace{1mm}}c@{\hspace{1mm}}
c@{\hspace{1mm}}c@{\hspace{1mm}}c@{\hspace{1mm}}} 1& 0 & 0 & 0 & 0 \\
0 & u^4{-}1 & 0 & 0 & 0 \\ 0 & 0 & u^6{-}u^2 & u^5{-}u &0\\ 0 & 0 & 
u^5{-}u & u^6{-}u^2 & 0 \\ 0 & 0 & 0 & 0 & u^8{-}u^6{-}u^4{+}u^2\end{array}
\right].\] 
(We will see below that this yields the Green functions of 
$\mbox{Sp}_4(\F_q)$, $q$ odd.)

(b) Consider the partition $\Irr(W)=\{\sgn\} \sqcup \{\sgn_2\} \sqcup 
\{\sgn_1\} \sqcup \{\sigma\} \sqcup \{1_W\}$, together with the sequence of 
integers $4$, $2$, $2$, $1$, $0$. We obtain the matrices:
\[ P{=}\left[\begin{array}{c@{\hspace{2mm}}c@{\hspace{2mm}}c@{\hspace{2mm}}
c@{\hspace{2mm}}c@{\hspace{2mm}}} u^4 & u^2 & u^2 & u^3{+}u & 1\\
0 & u^2 & 0 & u & 1 \\ 0 & 0 & u^2 & u & 1 \\ 0 & 0 & 0 & u & 1 \\
0 & 0 & 0 & 0 & 1 \end{array}\right], \;
\Lambda{=} \left[\begin{array}{c@{\hspace{1mm}}c@{\hspace{1mm}}
c@{\hspace{1mm}}c@{\hspace{1mm}}c@{\hspace{1mm}}} 1& 0 & 0 & 0 & 0 \\
0 & u^4{-}1 & 0 & 0 & \\ 0 & 0 & u^4{-}1 & 0 & 0 \\ 0 & 0 & 0 & u^6{-}
u^4{-}u^2{+}1 & 0 \\ 0 & 0 & 0 & 0 & u^8{-}u^6{-}u^4{+}u^2\end{array}
\right]\]
(We will see below that this yields the Green functions of 
$\mbox{Sp}_4(\F_q)$, $q$ even.)

(c) As in \cite[2.9]{GeMa1}, consider the partition $\Irr(W)=
\{\sgn\} \sqcup \{\sgn_2,\sgn_1, \sigma\} \sqcup \{1_W\}$, together with 
the sequence of integers $4$, $1$, $0$. We obtain the matrices:
\[ P{=}\left[\begin{array}{c@{\hspace{1mm}}c@{\hspace{1mm}}c@{\hspace{1mm}}
c@{\hspace{1mm}}c@{\hspace{1mm}}} u^4 & u^2 & u^2 & u^3{+}u & 1\\
0 & u & 0 & 0 & 0 \\ 0 & 0 & u & 0 & 0 \\ 0 & 0 & 0 & u & 1 \\
0 & 0 & 0 & 0 & 1 \end{array}\right], \;
\Lambda{=} \left[\begin{array}{c@{\hspace{1mm}}c@{\hspace{1mm}}
c@{\hspace{1mm}}c@{\hspace{1mm}}c@{\hspace{1mm}}} 1& 0 & 0 & 0 & 0 \\
0 & u^6{-}u^2 & 0 & u^5{-}u & 0 \\ 0 & 0 & u^6{-}u^2 & u^5{-}u & 0 \\
0 & u^5{-}u & u^5{-}u & u^6{+}u^4{-}u^2{-}1 & 0 \\
0 & 0 & 0 & 0 & u^8{-}u^6{-}u^4{+}u^2\end{array}
\right]\]
Quite remarkably, in all three cases the solutions are in $\Z[u]$.
(One easily finds partitions of $\Irr(W)$ for which this does not hold,
for example, $\Irr(W)=\{\sgn,\sgn_2\} \sqcup \{\sgn_1\} \sqcup \{\sigma\}
\sqcup \{1_W\}$.)
\end{exmp}

It is straightforward  to implement the algorithm in the proof of
Lemma~\ref{lem51} in the GAP programming language. In those cases 
where one expects that polynomial solutions exist, it is most efficient
to first specialise $u$ to a large number of integer values, then solve
the resulting systems of equations over $\Q$, and finally interpolate
to obtain polynomial solutions. (In order to avoid working with large
rational numbers, one can further reduce the specialised systems of 
equations modulo various prime numbers, then solve the resulting systems
over finite fields, and finally use ``chinese remainder'' techniques 
to recover the solutions over $\Q$; similar methods have been used
in the proof of \cite[Prop.~11.5.13]{gepf} where it was necessary to
invert certain matrices with polynomial entries.) All this works well 
for $W$ of rank up to $8$, including all exceptional types.

Although this turns the actual chronological development of things upside
down, the discussion in the previous section leads us to consider the 
partition of $\Irr(W)$ into families with respect to the ``equal 
parameter'' weight function $L_0\colon W \rightarrow \Z$ such that
$L_0(s)=1$ for all $s \in S$. The following conjecture has been found
through extensive experimentation with CHEVIE. It is verified for
all $W$ of exceptional type; the answer for $W$ of classical
type is open.

\begin{conj}[Geck--Malle \protect{\cite[\S 2]{GeMa1}}] \label{conj53} 
Consider the partition $\Irr(W)=\cF_1 \sqcup \ldots \sqcup \cF_r$ where 
$\cF_1,\ldots,\cF_r$ are the families with respect to $L_0$. Let $b_i$ 
be the constant value of the function $E \mapsto \tilde{\ba}_E$ on 
$\cF_i$; see Remark~\ref{rem5}. Assume that $b_1 \geq \ldots \geq b_r$. 
Let $P$ and $\Lambda$ be the matrices obtained by Lemma~\ref{lem51}. 
Then the following hold.
\begin{itemize}
\item[(a)] All the entries of $P$ and $\Lambda$ are polynomials in 
$\Z[u]$; furthermore, the polynomials in $P$ have non-negative coefficients. 
\item[(b)] Assume that $W$ is the Weyl group of a connected reductive
algebraic group $G$ over $k=\overline{\F}_p$, with a split $\F_q$-rational 
structure where $q$ is a power of $p$ (as in Remark~\ref{rem2}). Let $E_i 
\in \cS_W$. Then $\lambda_{E_i,E_i}(q)$ is the number of $\F_q$-rational 
points in the ``special piece'' corresponding to $E_i$; see 
Remark~\ref{defsp2}. 
\end{itemize}
\end{conj}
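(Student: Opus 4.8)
The plan is to prove the two parts of Conjecture~\ref{conj53} by connecting the linear algebra of Lemma~\ref{lem51} with the geometry of unipotent classes via the theory of Green functions. First I would recall the precise link: the matrix $\Omega$ is, up to normalisation, the matrix of scalar products of the (almost) Green functions of $G^F$, indexed by $\Irr(W)$ through the Springer correspondence; more precisely, $\omega_{E,E'}$ records the multiplicities coming from the decomposition of $R_w$ (Deligne--Lusztig virtual characters restricted to unipotent elements) and the Springer-theoretic interpretation of generalised Green functions in \cite[\S 24]{L4}, \cite{S1}. The Lusztig--Shoji algorithm computes the Green functions precisely by performing the $LDL^{\mathrm{tr}}$-type factorisation $\Omega = P^{\mathrm{tr}}\Lambda P$ with respect to the partition of $\Irr(W)$ into pieces ordered by the $\ba$-function; the entries of $P$ then encode the transition between the Green function basis and the characteristic-function basis of intersection cohomology complexes on $\overline{\cO}_E$, and the diagonal entries $\lambda_{E,E}$ record the number of $\F_q$-points on certain strata. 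So the whole point is to identify the combinatorial factorisation of Lemma~\ref{lem51} (for the specific partition into $L_0$-families) with the output of the Lusztig--Shoji algorithm.

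For part (a), once this identification is in place, the polynomiality of the entries of $P$ and $\Lambda$ follows because the Lusztig--Shoji algorithm is now known to terminate with the genuine Green functions in \emph{all} characteristics (including bad $p$), by the work of Shoji and Lusztig cited in the introduction; the Green functions are given by polynomials in $q$ by Lusztig's theory, hence so are all the entries. The non-negativity of the coefficients of the entries of $P$ is the statement that the transition coefficients between Green functions and IC-sheaf characteristic functions are, up to a power of $u$, Poincaré polynomials of stalks of intersection cohomology complexes on unipotent class closures, which are non-negative by the decomposition theorem (Beilinson--Bernstein--Deligne--Gabber). The subtlety here, and the reason the conjecture is only verified for exceptional types, is that this clean IC-sheaf interpretation requires the partition of $\Irr(W)$ used in the factorisation to be \emph{coarse enough} that each block corresponds to a single orbit-type stratum and \emph{fine enough} to be compatible with the closure order; the partition into $L_0$-families is conjecturally exactly right, but proving this in the classical types requires controlling the Springer correspondence and the families simultaneously across all partitions parametrising the classes, which is where the combinatorics becomes unwieldy.

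For part (b), I would argue as follows. For $E_i\in\cS_W$ the special representation in the family $\cF_i$, the diagonal block $\Lambda_i$ contains $\lambda_{E_i,E_i}$, and the recursion in the proof of Lemma~\ref{lem51} gives $u^{2b_i}\lambda_{E_i,E_i}$ as $\omega_{E_i,E_i}$ minus correction terms coming from the earlier (larger, in the $\ba$-order hence in the closure order on special classes, by Theorem~\ref{thmspalt}) families. Geometrically, $\omega_{E_i,E_i}(q)$ is essentially the number of $\F_q$-points of a Bruhat-type cell meeting $\overline{\cO}_{E_i}$, and the correction terms subtract off the contributions of the proper special subvarieties $\overline{\cO}_{E'}$ with $E'\in\cS_W$ and $\cO_{E'}\subsetneq\overline{\cO}_{E_i}$; by Remark~\ref{defsp2}, what remains is exactly the special piece of $E_i$, which is the union of $\cO_{E_i}$ and the non-special classes below it but not below any smaller special class. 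The key inputs are: Theorem~\ref{thmspalt} to guarantee that the recursion peels off special class closures in an order compatible with inclusion; the fact (Remark~\ref{rem5}) that $E\mapsto\tilde\ba_E$ is constant on families, so the $b_i$ are well-defined; and a point-counting identity expressing $\sum_{E'}\lambda_{E',E'}(q)$ over a lower set of families as $|\overline{\cO}^F|$ for the corresponding special class. The main obstacle, for both parts, is the classical-type case: one must verify that the partition into $L_0$-families produces precisely the block structure for which the Lusztig--Shoji matrix $P$ becomes the ``geometrically meaningful'' one (entries from IC stalks on special pieces), and this seems to require either a uniform argument not currently available or a case analysis in types $B_n$, $C_n$, $D_n$ using the combinatorial description of families and of the Springer correspondence — which is exactly why the statement remains a conjecture there and is only \emph{verified} (by CHEVIE computation) for exceptional types.
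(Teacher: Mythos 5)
The statement you have been asked to prove is a conjecture: the paper offers no proof of it, only the remark that it has been found through experimentation and verified by explicit CHEVIE computation for all $W$ of exceptional type, with the classical-type case left open. Your proposal likewise does not prove it --- you concede as much at the end --- so there is no complete argument to compare; but it is worth pinpointing where your outline claims more than is actually known. The identification you want to make in your first paragraph, between the factorisation of Lemma~\ref{lem51} and the output of the Lusztig--Shoji algorithm, is only available for the partition $\Irr(W)=\cI_1^*\sqcup\ldots\sqcup\cI_r^*$ indexed by unipotent classes via the Springer correspondence, with exponents $b_i^*=\frac{1}{2}(\dim G-\dim T-\dim\cO_i)$; that is the content of Theorem~\ref{thm3}. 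Conjecture~\ref{conj53} feeds a genuinely different input into Lemma~\ref{lem51}: the partition into Lusztig families for $L_0$, with exponents $\tilde{\ba}_E$. These partitions do not coincide in general --- compare Example~\ref{exp52}(a) (Springer partition, blocks $\{\sgn\},\{\sgn_2\},\{\sgn_1,\sigma\},\{1_W\}$) with Example~\ref{exp52}(c) (family partition, blocks $\{\sgn\},\{\sgn_2,\sgn_1,\sigma\},\{1_W\}$) --- and Lemma~\ref{lem51} produces a different pair $(P,\Lambda)$ for each choice. So ``once this identification is in place, polynomiality follows'' begs the question: the identification is exactly what is missing, and for a generic choice of partition the entries are not even polynomials (the paper notes this explicitly at the end of Example~\ref{exp52}). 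The appeal to the decomposition theorem for the non-negativity in (a), and the IC-stalk interpretation of the entries of $P$, are similarly tied to the Springer partition and do not transfer for free to the family partition.

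For part (b), your recursion argument has the right flavour, but its key step --- ``a point-counting identity expressing $\sum_{E'}\lambda_{E',E'}(q)$ over a lower set of families as $|\overline{\cO}^F|$'' --- is essentially a restatement of the conclusion rather than an available input. What is actually known (Theorem~\ref{thm3}) is the formula $\lambda_{E,E'}^*(q)=\sum_{u}Y_{\iota_E}(u)\,Y_{\iota_{E'}}(u)$ for the \emph{starred} matrices coming from the Springer partition; relating the unstarred $\lambda_{E_i,E_i}$ of the family factorisation to point counts on special pieces would require a block-by-block comparison of the two factorisations, and that comparison is precisely where the classical-type combinatorics (Springer correspondence and families for partitions/symbols handled simultaneously) becomes intractable. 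In short: your text is a reasonable heuristic gloss on why the conjecture should be true and why it is hard, but it is not a proof, and the paper does not contain one either.
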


We now turn to the discussion of Green functions. Let $G$ be a connected 
reductive algebraic group over $k=\overline{\F}_p$. Let $B \subseteq G$ be 
a Borel subgroup and $T \subseteq G$ be a maximal torus contained in $B$. 
Let $W=N_G(T)/T$ be the Weyl group of $G$. Let $q$ be a power of $p$ and 
$F \colon G \rightarrow G$ be a Frobenius map with respect to a split 
$\F_q$-rational structure on $G$, as in Remark~\ref{rem2}. Recall that 
then $B$ and all unipotent classes of $G$ are $F$-stable; furthermore, 
$F$ acts as the identity on $W$.  

Let $w\in W$ and $T_w \subseteq G$ be an $F$-stable maximal torus
obtained from $T$ by twisting with~$w$. Let $\theta \in \Irr(T_w^F)$ and
$R_{T_w}^\theta$ be the character of the corresponding virtual 
representation of $G^F$ defined by Deligne and Lusztig; see Carter 
\cite[\S 7.2]{Carter2}. Then the restriction of $R_{T_w}^\theta$ to 
$G_{\text{uni}}^F$ only depends on $w$ but not on $\theta$ (see 
\cite[7.2.9]{Carter2}). This restriction is called 
the {\em Green function} corresponding to $w \in W$; it will be denoted 
by $Q_w$. There is a character formula which reduces the computation of 
the values of $R_{T_w}^\theta$ to the computation of the values of 
various Green functions (see \cite[7.2.8]{Carter2}). It is known that 
the values of $Q_w$ are integers (see \cite[\S 7.6]{Carter2}), but it 
is a very hard problem to compute these values explicitly.

Let $E \in \Irr(W)$. Following Lusztig \cite[\S 3.7]{LuBook},
we define 
\[ R_E:=\frac{1}{|W|} \sum_{w \in W} \mbox{trace}(w,E)\, R_{T_w}^1
\qquad \mbox{and} \qquad Q_E:=R_E|_{G_{\text{uni}}^F},\]
where the superscript $1$ stands for the unit representation of $T_w^F$.
Note that $Q_w=\sum_{E \in \Irr(W)} \mbox{trace}(w,E)\, Q_E$, 
so $Q_E$ and $Q_w$ determine each other. Now the entries of the 
matrix $\Omega$ introduced above have the following interpretation:
\[\omega_{E,E'}(q)=\sum_{u \in G_{\text{uni}}^F} Q_E(u) Q_{E'}(u)
\qquad (E,E' \in \Irr(W)).\]
(This follows from the orthogonality relations for Green functions; 
see \cite[7.6.2]{Carter2}. It also uses the formulae for $|T_w^F|$ 
and $|(N_G(T_w)/T_w)^F|$ in \cite[\S 3.3]{Carter2}.)

As in the previous section, let $\cN_G$ be the set of all pairs 
$(\cO,\cL)$ where $\cO$ is a unipotent class in $G$ and $\cL$ is a
$G$-equivariant irreducible $\overline{\Q}_\ell$-local system on 
$\cO$ (up to isomorphism). Recall that the {\em Springer 
correspondence} defines an injection
\[ \Irr(W) \hookrightarrow \cN_G, \qquad E \mapsto \iota_E=(\cO_E,
\cL_E).\]
The Frobenius map $F$ acts naturally on $\cN_G$. Given $\iota=(\cO,
\cL) \in \cN_G^F$, we obtain a class function $Y_\iota \colon G^F 
\rightarrow \C$ as in \cite[24.2.3]{L4}. We have $Y_\iota(g)=0$ unless 
$g \in \cO^F$. Furthermore, the matrix $\bigl(Y_{\iota}(g)\bigr)$ (with 
rows labelled by all $\iota=(\cO,\cL) \in \cN_G^F$ where $\cO$ is fixed, 
and columns labelled by a set of representatives of the $G^F$-classes 
contained in $\cO^F$) is, up to multiplication of the rows by roots of 
unity, the ``$F$-twisted'' character table of the finite group $A(u)=
C_G(u)/C_G(u)^\circ$ ($u \in \cO$); see \cite[24.2.4, 24.2.5]{L4}. 
In particular, the following hold:

\begin{rem} \label{rem54} \rm (a) The functions $\{Y_\iota \mid \iota 
\in \cN_G^F\}$ form a basis of the space of class functions on 
$G_{\text{uni}}^F$. 

(b) Let $\iota=(\cO,\cL)\in \cN_G^F$ where $\cL \cong \overline{\Q}_\ell$.
Then there is a root of unity $\eta$ such that $Y_\iota(g)= \eta$ for all 
$g \in \cO^F$. (It will turn out that $\eta=\pm 1$; see Remark~\ref{rem55} 
below.)
\end{rem}

Let $\cO_1,\cO_2,\ldots,\cO_r$ be the unipotent classes of $G$, where the
labelling is chosen such that $\dim \cO_1 \leq \dim \cO_2 \leq \ldots 
\leq \dim \cO_r$. For $i\in \{1,\ldots,r\}$, we set
\begin{align*}
\cI_i^*&:=\{ E \in \Irr(W) \mid \cO_E=\cO_i\}, \mbox{ and}\\
b_i^*&:=\frac{1}{2}(\dim G-\dim T- \dim \cO_i).
\end{align*}
(It is known that $\dim G -\dim T-\dim \cO_i$ always is an even number;
see \cite[5.10.2]{Carter2}.) Recall that all pairs $(\cO_i,
\overline{\Q}_\ell) \in \cN_G$ belong to the image of the Springer 
correspondence. Thus, we obtain a partition $\Irr(W)=\cI_1^* \sqcup 
\ldots \sqcup \cI_r^*$, and a decreasing sequence of integers $b_1^* 
\geq \ldots \geq b_r^*$. Hence, Lemma~\ref{lem51} yields a factorisation 
\[ \Omega=(P^*)^{\text{tr}} \cdot \Lambda^*\cdot P^*.\]
Recall that the entries of $P^*$, $\Lambda^*$ are in $\Q(u)$; we denote 
these entries by $p_{E,E'}^*$ and $\lambda_{E,E'}^*$. With this notation, 
we can now state the following fundamental result.

\begin{thm}[Springer \protect{\cite{Spr}}; Shoji \protect{\cite{S1}, 
\cite[\S 5]{S2}}; Lusztig \protect{\cite[\S 24]{L4}, \cite{L5}}; see
also \protect{\cite[\S 3]{aver}}] \label{thm3} In the above setting, 
the entries of $P^*$ and $\Lambda^*$ are polynomials in $u$. We have 
\begin{align*}
Q_E&=\sum_{E' \in \Irr(W)} p_{E',E}^*(q)\,Y_{\iota_{E'}} \quad 
\mbox{and}\\
\lambda_{E,E'}^*(q)&=\sum_{u \in G_{\operatorname{uni}}^F} 
Y_{\iota_E}(u) \, Y_{\iota_{E'}}(u)
\end{align*}
for all $E,E'\in \Irr(W)$. Furthermore, the polynomial $p_{E',E}^*$ is 
$0$ unless $\cO_{E'} \subseteq \overline{\cO}_E$. 
\end{thm}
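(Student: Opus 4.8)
The plan is to pass from the character-theoretic functions $Q_E$ to the geometry of the Springer sheaf, to read off from intersection cohomology an expansion of $Q_E$ in the basis $\{Y_\iota\}$, and finally to match this expansion with the algebraic factorisation produced by Lemma~\ref{lem51}. First I would invoke the decomposition of the Springer sheaf $\pi_*\overline{\Q}_\ell$, where $\pi$ is the Springer resolution of $G_{\text{uni}}$: up to shift and Tate twist, $\pi_*\overline{\Q}_\ell\cong\bigoplus_{E\in\Irr(W)}E\otimes K_E$ with $K_E=IC(\overline{\cO}_E,\cL_E)$ and $E\mapsto\iota_E=(\cO_E,\cL_E)$ the Springer correspondence. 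The essential input is that the characteristic function of $K_E$, extended by zero to $G_{\text{uni}}^F$ and normalised so as to restrict to $Y_{\iota_E}$ on $\cO_E^F$ — call it $X_{\iota_E}$ — satisfies $Q_E=q^{\,b_E^*}\,X_{\iota_E}$, where $b_E^*=\tfrac12(\dim G-\dim T-\dim\cO_E)$ is the dimension of the Springer fibre over a point of $\cO_E$. This rests on Springer's construction of the $W$-action on Springer fibres, reinterpreted via perverse sheaves, together with the Lefschetz trace formula; the point of Theorem~\ref{thm3} is that it holds for \emph{every} prime $p$, for which one also needs the cleanness and purity statements of Shoji \cite{S1}, \cite{S2} and Lusztig \cite[\S24]{L4}, \cite{L5}.

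\emph{Expansion in the $Y$-basis.} Next I would expand $X_{\iota_E}$ in the basis $\{Y_\iota\}$ of class functions on $G_{\text{uni}}^F$ (Remark~\ref{rem54}(a)). By the support condition for intersection cohomology complexes, $X_{\iota_E}(g)=0$ for $g\notin\overline{\cO}_E$, while the restriction to $\cO_E^F$ is $Y_{\iota_E}$; the contributions of the smaller strata $\cO_{E'}$ with $\cO_{E'}\subsetneq\overline{\cO}_E$ are controlled by the stalks of $K_E$, which are alternating sums of traces of $F$ and, by purity, are given by fixed polynomials $\widetilde p_{E',E}\in\Z[u]$ evaluated at $q$ (one checks the local systems occurring are again of the form $\cL_{E'}$, since the whole Springer sheaf is built from the $K_{E'}$). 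Thus $Q_E=\sum_{E'}\widetilde p_{E',E}(q)\,Y_{\iota_{E'}}$ with $\widetilde p_{E',E}=0$ unless $\cO_{E'}\subseteq\overline{\cO}_E$, and — since $IC$ of a local system on a single class restricts on that class to the local system itself — the submatrix of $\widetilde P:=(\widetilde p_{E',E})$ indexed by the pairs with $\cO_E=\cO_{E'}$ equals $q^{b^*}I$. Ordering $\Irr(W)$ by the partition $\cI_1^*,\dots,\cI_r^*$ and the integers $b_1^*\geq\cdots\geq b_r^*$, the matrix $\widetilde P$ then has exactly the block shape of the matrix $P$ in Lemma~\ref{lem51}.

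\emph{Matching with Lemma~\ref{lem51}.} Now I would substitute this expansion into the orthogonality relation $\omega_{E,E'}(q)=\sum_{u\in G_{\text{uni}}^F}Q_E(u)Q_{E'}(u)$ recorded just before the theorem. Since $Y_\iota$ is supported on $\cO$, the matrix $\widetilde\Lambda$ with entries $\widetilde\lambda_{E,E'}(q):=\sum_u Y_{\iota_E}(u)Y_{\iota_{E'}}(u)$ is block diagonal for the partition $(\cI_i^*)$, so one obtains $\Omega(q)=\widetilde P(q)^{\operatorname{tr}}\,\widetilde\Lambda(q)\,\widetilde P(q)$, a factorisation of exactly the form produced by Lemma~\ref{lem51}. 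For every $q$ not annihilating the principal minors of $\Omega$ — nonzero polynomials, by the remark of Lusztig used in the proof of Lemma~\ref{lem51} — the block division algorithm of that lemma commutes with the specialisation $u\mapsto q$ and the specialised factorisation is unique, whence $\widetilde p_{E',E}(q)=p^*_{E',E}(q)$ and $\widetilde\lambda_{E,E'}(q)=\lambda^*_{E,E'}(q)$ for all such $q$. Since a rational function agreeing with a polynomial at infinitely many values must equal that polynomial, we get $p^*_{E',E}=\widetilde p_{E',E}\in\Z[u]$ and $\lambda^*_{E,E'}=\widetilde\lambda_{E,E'}\in\Z[u]$; this establishes polynomiality, the two displayed formulas (valid for every $q$ by the previous two paragraphs), and the vanishing of $p^*_{E',E}$ unless $\cO_{E'}\subseteq\overline{\cO}_E$.

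\emph{The main obstacle.} The genuinely hard part will be the geometric identification: matching the Deligne--Lusztig function $Q_E$ with the normalised characteristic function of $IC(\overline{\cO}_E,\cL_E)$ and, crucially, pinning down the scalar — the exponent $b_E^*$ and the root of unity implicit in the normalisation of $Y_\iota$ (which turns out to be $\pm1$; cf. Remark~\ref{rem55}). In good characteristic this is Springer's theorem together with the perverse-sheaf reformulation and purity; the content of Theorem~\ref{thm3} as stated is the removal of any restriction on $p$, which depends on the cleanness results and the polynomiality of the intersection cohomology stalks due to Shoji \cite{S1}, \cite{S2} and Lusztig \cite[\S24]{L4}, \cite{L5}. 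The remaining two steps are then essentially formal, the only subtlety being the harmless fact that the block algorithm of Lemma~\ref{lem51} specialises correctly at all but finitely many $q$.
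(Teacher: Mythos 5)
The paper offers no proof of Theorem~\ref{thm3} --- it is quoted from Springer, Shoji and Lusztig, with the pieces assembled as in \cite[\S 3]{aver} --- and your outline reconstructs exactly that argument: identify $Q_E$ with the suitably normalised characteristic function of $IC(\overline{\cO}_E,\cL_E)$ coming from the decomposition of the Springer sheaf, expand in the basis $\{Y_\iota\}$ using the purity and polynomiality of the stalks, and match the resulting block-triangular/block-diagonal factorisation of $\Omega$ with the unique one produced by Lemma~\ref{lem51} by specialising at infinitely many $q$. This is correct, and you rightly isolate as the genuine content the deep inputs (the identification of $Q_E$ with the intersection cohomology function in arbitrary good or bad characteristic, the independence-of-$q$ of the stalk polynomials, and the fact that only Springer local systems occur in the stalks) rather than the formal matching step.
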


\begin{rem} \label{rem55} \rm The above result shows that, for any 
$E \in \Irr(W)$, we have 
\[ Q_E=q^{d_E}\,Y_{\iota_E}+\sum_{\atop{E' \in \Irr(W)}{\cO_{E'} 
\subsetneqq \overline{\cO}_E}}  p_{E',E}^*(q)\,Y_{\iota_{E'}}.\]
These equations can be inverted and, hence, every function $Y_{\iota_E}$ 
can be expressed as a $\Q$-linear combination of the Green functions 
$Q_w$ ($w \in W$). Since the values of the Green functions are integers 
(see \cite[\S 7.6]{Carter2}), we deduce that the values of $Y_{\iota_E}$ 
are rational numbers. Since they are also algebraic integers, they must
be integers. In particular, the root of unity $\eta$ in 
Remark~\ref{rem54} must be $\pm 1$. 
\end{rem}

\begin{rem} \label{rem55a} \rm  (a)  Note that, in order to run the algorithm
in Lemma~\ref{lem51}, we only need to know the map $E \mapsto \cO_E$
and the dimensions $\dim \cO_E$. The finer information on the local
systems $\cL_E$ only comes in at a later stage.

(b) As formulated above, Theorem~\ref{thm3} does not say anything about 
the tricky question of determining the values of the functions 
$Y_{\iota_E}$. This relies on the careful choice of a representative 
in $\cO^F$, where the situation is optimal when a so-called ``split'' 
element can be found; see the discussion by Beynon--Spaltenstein 
\cite[\S 3]{besp}. Such split elements exist for $G$ of classical type 
in good characteristic; see Shoji \cite{S0a}. On the other hand, in 
type $E_8$ where $q \equiv -1\bmod 3$, there is one unipotent class 
which does not contain any split element; see \cite[Case~V, 
p.~591]{besp}.---For our purposes here, the information in 
Remark~\ref{rem54}(b) will be sufficient.
\end{rem}

\begin{exmp} \label{exp52a} \rm Let us re-interprete the computations 
in Example~\ref{exp52} in the light of Theorem~\ref{thm3}. By Carter 
\cite[p.~424]{Carter2} and Lusztig--Spaltenstein \cite[6.1]{LuSp}, the 
Springer correspondence for $G=\mbox{Sp}_4(k)$ is given by the following 
tables:
\[ \begin{array}{ccll} \hline \multicolumn{3}{c}{\mbox{char}(k) 
\neq 2} & b_E^*\\ \hline 
\sgn   &\mapsto & \cO_{(1111)} & \,4 \\ 
\sgn_2 &\mapsto & \cO_{(211)}  & \,2 \\
\sgn_1 &\mapsto & \cO_{(22)}   & \,1\;(\cL_E\not\cong\overline{\Q}_\ell)\\
\sigma &\mapsto & \cO_{(22)}   & \,1\\
1_W    &\mapsto & \cO_{(4)}    & \,0 \\ \hline 
\end{array}\qquad\qquad \begin{array}{cclc} \hline 
\multicolumn{3}{c}{\mbox{char}(k)=2} & b_E^* \\ \hline 
\sgn   &\mapsto & \cO_{(1111)} & 4\\ 
\sgn_2 &\mapsto & \cO_{(211)}  & 2 \\
\sgn_1 &\mapsto & \cO_{(22)}^* & 2 \\
\sigma &\mapsto & \cO_{(22)}   & 1  \\
1_W    &\mapsto & \cO_{(4)}    & 0 \\ \hline \end{array}\]
Here, $b_E^*=(\dim G-\dim T-\dim \cO_E)/2$; furthermore, $\cL_E \cong 
\overline{\Q}_\ell$ unless explicitly stated otherwise. Hence, these data 
give rise to the first two cases in Example~\ref{exp52}.
\end{exmp}

We shall now explain, following Lusztig \cite[1.2]{Lu10}, how 
the cardinalities of the sets $(\cO \cap B\dot{w}B)^F$ (see 
Section~\ref{secbruhat}) can be effectively computed. For this 
purpose, it will be convenient to introduce the following notation.

\begin{defn} \label{def3} \rm For any $E \in \Irr(W)$ and $w \in W$, 
we set 
\[ \beta_E^w:=|G^F/B^F|\sum_{1\leq i \leq d} |O_{u_i}\cap B^F
\dot{w}B^F| \, Y_{\iota_E}(u_i),\]
where $u_1,\ldots,u_d$ are representatives of the $G^F$-conjugacy 
classes contained in $\cO_E^F$. Note that, by Remark~\ref{rem54}(b), we
have 
\[ \beta_E^w=\pm |G^F/B^F||(\cO_E \cap B\dot{w}B)^F| \qquad
\mbox{if $\cL_E\cong \overline{\Q}_\ell$}.\]
\end{defn}

We will now rewrite the expression for $\beta_E^w$ using various results
from the representation theory of $G^F$. First, by Remark~\ref{rem3}(b), 
we obtain
\begin{align*}
\beta_E^w&= \sum_{1\leq i \leq d} \sum_{V  \in \Irr(\cH_q)} 
|O_{u_i}|\, \mbox{trace}(u_i,\rho_V)\, \mbox{trace}(T_w,V)\,
Y_{\iota_E}(u_i)\\
&=\sum_{V  \in \Irr(\cH_q)}\mbox{trace}(T_w,V) \sum_{u \in 
G_{\text{uni}}^F} \chi_V(u)\,Y_{\iota_E}(u),
\end{align*}
where $\chi_V$ denotes the character of $\rho_V$. Now, by definition, 
$\chi_V$ is a constituent of the character of the permutation module 
${\C}[G^F/B^F]$, and the latter is known to be equal to $R_T^1$; see 
\cite[7.2.4]{Carter2}. But then the multiplicity of $\chi_V$ in 
any $R_{T_w}^\theta$ is $0$ unless $\theta=1$; see \cite[7.3.8]{Carter2}.
Consequently, we can write 
\[ \chi_V=\Bigl(\sum_{E' \in \Irr(W)} \langle R_{E'},\chi_V\rangle \,
R_{E'}\Bigr)+\psi_V,\]
where $\langle R_{E'},\chi_V\rangle$ denotes the multiplicity of
$\chi_V$ in the decomposition of $R_{E'}$ as a linear combination of
irreducible characters; furthermore, $\psi_V$ is a class function which
is orthogonal to all $R_{T_w}^\theta$. We now use Theorem~\ref{thm3}
to evaluate $\chi_V$ on unipotent elements. Let $u \in G^F$ be unipotent.
Then  
\[ \chi_V(u)=\psi_V(u)+\sum_{E',E'' \in \Irr(W)} \langle R_{E'},\chi_V
\rangle\, p_{E''.E'}^*(q)\, Y_{\iota_{E''}}(u).\]
Consequently, we obtain
\begin{align*}
\sum_{u \in G_{\text{uni}}^F} \chi_V(u)Y_{\iota_E}(u) &=
\sum_{u \in G_{\text{uni}}^F} \psi_V(u)Y_{\iota_E}(u)\\
&\quad + \sum_{E',E'' \in \Irr(W)} \langle R_{E'},\chi_V\rangle \,
p_{E'',E'}^*(q)\, \lambda_{E'',E}^*(q).
\end{align*}
Finally, by Remark~\ref{rem55}, we can write $Y_{\iota_E}$ as a linear
combination of Green functions. Since $\psi_V$ is orthogonal to all
$R_{T_w}^\theta$, it follows that 
\[\sum_{u \in G_{\text{uni}}^F}\psi_V(u)Y_{\iota_E}(u)=0.\]
Thus, we have shown the following formula which is a slight variation
of the one obtained by Lusztig \cite[1.2(c)]{Lu10}; this is the key
to the explicit computation of $\beta_E^w$.

\begin{lem} \label{lem52} For any $E \in \Irr(W)$ and $w \in W$, we have:
\[ \beta_E^w=\sum_{V \in \Irr(\cH_q)} \sum_{E',E'' \in \Irr(W)} 
\operatorname{trace}(T_w,V)\, \langle R_{E'},\chi_V\rangle\, 
p_{E'',E'}^*(q) \, \lambda_{E'',E}^*(q).\]
\end{lem}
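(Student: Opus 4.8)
The plan is to unwind the definition of $\beta_E^w$ by successively substituting the representation-theoretic identities already assembled in the paragraphs above. First I would insert the formula of Remark~\ref{rem3}(b) for $|O_{u_i}\cap B^F\dot{w}B^F|$ into the defining sum. Using $|G^F/B^F|\,|B^F|=|G^F|$ and $|G^F|/|C_{G^F}(u_i)|=|O_{u_i}|$, the prefactor $|G^F/B^F|$ is absorbed and each term acquires the weight $|O_{u_i}|$. Since $Y_{\iota_E}$ vanishes outside $\cO_E^F$, the sum over the class representatives $u_1,\dots,u_d$ can then be replaced by a sum over all of $G_{\operatorname{uni}}^F$, giving
\[
\beta_E^w=\sum_{V\in\Irr(\cH_q)}\operatorname{trace}(T_w,V)\sum_{u\in G_{\operatorname{uni}}^F}\chi_V(u)\,Y_{\iota_E}(u),
\]
where $\chi_V$ denotes the character of $\rho_V$.

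The core of the argument is to evaluate the inner sum $\sum_{u}\chi_V(u)\,Y_{\iota_E}(u)$. The key input is that $\chi_V$ is a constituent of the permutation character $\C[G^F/B^F]=R_T^1$, so its multiplicity in any $R_{T_w}^\theta$ vanishes for $\theta\neq 1$; hence one may write $\chi_V=\sum_{E'\in\Irr(W)}\langle R_{E'},\chi_V\rangle\,R_{E'}+\psi_V$, where $\psi_V$ is orthogonal to every Deligne--Lusztig character $R_{T_w}^\theta$. Restricting to unipotent elements and applying Theorem~\ref{thm3} to each $R_{E'}$, so that $Q_{E'}=\sum_{E''}p_{E'',E'}^*(q)\,Y_{\iota_{E''}}$, one obtains
\begin{align*}
\sum_{u\in G_{\operatorname{uni}}^F}\chi_V(u)\,Y_{\iota_E}(u)&=\sum_{u}\psi_V(u)\,Y_{\iota_E}(u)\\
&\quad+\sum_{E',E''}\langle R_{E'},\chi_V\rangle\,p_{E'',E'}^*(q)\sum_{u}Y_{\iota_{E''}}(u)\,Y_{\iota_E}(u).
\end{align*}
The second $Y$-sum equals $\lambda_{E'',E}^*(q)$ by the second displayed identity of Theorem~\ref{thm3}. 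For the first term I would invoke Remark~\ref{rem55}: every $Y_{\iota_E}$ is a $\Q$-linear combination of the Green functions $Q_w=R_{T_w}^1|_{G_{\operatorname{uni}}^F}$, hence it is orthogonal to $\psi_V$ and that term drops out. Substituting this back into the expression for $\beta_E^w$ and relabelling yields exactly the asserted formula.

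The only genuinely delicate point is the vanishing of the $\psi_V$-contribution, that is, the claim that $Y_{\iota_E}$ lies in the span of the Green functions; this is precisely the content of Remark~\ref{rem55}, which in turn rests on the invertibility built into Theorem~\ref{thm3}. Everything else is bookkeeping: tracking the cardinality factors in the first step and carrying the double sum over $\Irr(W)$ through the application of Theorem~\ref{thm3}. One should also note that only the restriction of $\chi_V$ to $G_{\operatorname{uni}}^F$ ever enters, which is why the indeterminacy of the ``error term'' $\psi_V$ outside the Deligne--Lusztig span causes no harm.
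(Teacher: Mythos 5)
Your proposal is correct and reproduces essentially the same argument the paper gives in the paragraphs immediately preceding the lemma: Remark~\ref{rem3}(b) to convert the intersection counts into Hecke-algebra traces, the decomposition $\chi_V=\sum_{E'}\langle R_{E'},\chi_V\rangle R_{E'}+\psi_V$ justified by $\C[G^F/B^F]=R_T^1$, Theorem~\ref{thm3} to supply both $p_{E'',E'}^*(q)$ and $\lambda_{E'',E}^*(q)$, and Remark~\ref{rem55} to eliminate the $\psi_V$-contribution. You also correctly isolate the one genuinely delicate point (that $Y_{\iota_E}$ lies in the span of the Green functions), which is exactly where the paper's own derivation places the weight.
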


In the above formula, the terms ``$\mbox{trace}(T_w,V)$'' can also be seen
to be specialisations of some well-defined polynomials. For this purpose,
we introduce the generic Iwahori--Hecke algebra $\cH$ associated with $W$. 
This algebra is defined over the ring of Laurent polynomials $A={\Z}[u^{1/2},
u^{-1/2}]$; it has an $A$-basis $\{T_w\mid w \in W\}$ and the multiplication 
is given as follows, where $s \in S$ and $w \in W$:
\[T_sT_w=\left\{\begin{array}{cl}T_{sw}&\qquad\mbox{if $l(sw)>l(w)$},
\\uT_{sw}+(u-1)T_w&\qquad\mbox{if $l(sw)<l(w)$};\end{array}\right.\]
see \cite[\S 4.4]{gepf}. Thus, we have $\cH_q \cong \C \otimes_A \cH$
where $\C$ is considered as an $A$-module via the specialisation $A 
\rightarrow \C$, $u^{1/2}\mapsto q^{1/2}$; here, $q^{1/2}$ is a fixed 
square root of $q$ in $\C$. Let $K$ be the field of fractions of $A$ 
and $\cH_K$ be the $K$-algebra obtained by extending scalars from $A$
to $K$. Then it is known that $\cH_K$ is split semisimple and that there 
is a bijection $\Irr(W) \leftrightarrow \Irr(\cH_K)$, $E \leftrightarrow
E_u$, such that 
\[ \mbox{trace}(w,E)=\mbox{trace}(T_w,E_u)|_{u^{1/2}\rightarrow 1}
\qquad \mbox{for all $w \in W$};\]
see \cite[8.1.7, 9.3.5]{gepf} or \cite[Chap.~20]{Lusztig03}. Now,
following \cite[8.2.9]{gepf}, we define the {\em character table} of 
$\cH$ by 
\[ X(\cH):=\bigl(\mbox{trace}(T_{w_C},E_u)\bigr)_{E \in \Irr(W),\,
C\in \Cl(W)},\]
where $w_C \in \Cmin$ for each $C \in \Cl(W)$. (By \cite[8.2.6]{gepf},
this does not depend on the choice of the elements $w_C$; by 
\cite[9.3.5]{gepf}, the entries of $X(\cH)$ are in $\Z[u^{1/2}]$.) 

Finally, since the algebra $\cH_q$ is semisimple, we also have a bijection
$\Irr(\cH_q) \leftrightarrow \Irr(\cH_K)$, $V \leftrightarrow V_u$, 
such that 
\[ \mbox{trace}(T_w,V)=\mbox{trace}(T_w,V_u)|_{u^{1/2}\rightarrow q^{1/2}}
\qquad \mbox{for all $w \in W$}.\]
Composing this bijection with the previous bijection $E \leftrightarrow 
E_u$, we obtain a bijection $\Irr(W) \leftrightarrow \Irr(\cH_q)$, $E 
\leftrightarrow E_q$. We now define the matrix
\[\Upsilon_W:=\bigl(\langle R_E,\chi_{E_q'}\rangle \bigr)_{E,E' 
\in \Irr(W)}.\]
The entries of this matrix are explicitly described by Lusztig's 
multiplicity formula \cite[Main Theorem 4.23]{LuBook}, together with the 
information in \cite[1.5]{Lusztig80} (for types $E_7$, $E_8$) and 
\cite[12.6]{LuBook} (in all remaining cases). It turns out that 
$\Upsilon_W$ is given by certain {\em non-abelian Fourier transformations} 
associated to the various families of $\Irr(W)$; in particular, 
$\Upsilon_W$ only depends on $W$, but not on $p$ or $q$. 

Now the three matrices $\Lambda^*$, $P^*$, $\Upsilon_W$ have rows and 
columns labelled by $\Irr(W)$; furthermore, $X(\cH)$ has rows labelled 
by $\Irr(W)$ and columns labelled by $\Cl(W)$. Consequently, it makes 
sense to consider the following product
\[ \Xi^*:=\Lambda^* \cdot P^* \cdot \Upsilon_W \cdot X(\cH),\]
which is a matrix with entries in ${\Q}[u^{1/2},u^{-1/2}]$, which has
rows labelled by $\Irr(W)$ and columns labelled by $\Cl(W)$. Then 
Lemma~\ref{lem52} can be re-stated as follows.

\begin{cor} \label{cor51} Let $E \in \Irr(W)$ and $w\in \Cmin$ for some
$C \in \Cl(W)$. Then we have:
\begin{center}
\fbox{$\beta_E^w=\mbox{$(E,C)$-entry of the matrix } \Xi^*|_{u^{1/2}
\rightarrow q^{1/2}}$}
\end{center}
In particular, the numbers $\beta_E^w$ are given by ``polynomials in $q$''.
\end{cor}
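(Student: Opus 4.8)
The plan is to assemble Corollary~\ref{cor51} directly from Lemma~\ref{lem52} by observing that every factor appearing there is the specialisation of a matrix entry that has already been introduced as a polynomial (or Laurent polynomial) in $u^{1/2}$. First I would recall that, by construction, the product
\[ \Xi^*=\Lambda^* \cdot P^* \cdot \Upsilon_W \cdot X(\cH)\]
has rows indexed by $\Irr(W)$ and columns indexed by $\Cl(W)$, and that its $(E,C)$-entry is, by the ordinary rule for matrix multiplication,
\[ \sum_{E',E'' \in \Irr(W)} \lambda_{E,E''}^*\, p_{E'',E'}^*\, \langle R_{E'},\chi_{E_q'''}\rangle\, \operatorname{trace}(T_{w_C},E_u''')\]
summed also over $E'''\in\Irr(W)$, where $w_C$ is the chosen element of $\Cmin$. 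I would then note that $\Lambda^*$ is symmetric (since $\Omega$ is), so $\lambda_{E,E''}^*=\lambda_{E'',E}^*$, which is the shape in which the term appears in Lemma~\ref{lem52}.

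Next I would match the remaining factors one at a time. The entries of $P^*$ and $\Lambda^*$ are polynomials in $u$ by Theorem~\ref{thm3}, and specialising $u\mapsto q$ recovers $p_{E'',E'}^*(q)$ and $\lambda_{E'',E}^*(q)$. The entries of $\Upsilon_W$ are the integers $\langle R_E,\chi_{E_q'}\rangle$, independent of $q$, so they specialise to themselves. Finally the entry $\operatorname{trace}(T_{w_C},E_u''')$ of $X(\cH)$ lies in $\Z[u^{1/2}]$ and, by the displayed compatibility $\operatorname{trace}(T_w,V)=\operatorname{trace}(T_w,V_u)|_{u^{1/2}\to q^{1/2}}$ together with the bijection $E\leftrightarrow E_q$ identifying $\Irr(W)$ with $\Irr(\cH_q)$, its specialisation at $u^{1/2}\mapsto q^{1/2}$ is exactly $\operatorname{trace}(T_{w_C},V)$ for the corresponding $V\in\Irr(\cH_q)$. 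Substituting all four identifications into the $(E,C)$-entry of $\Xi^*|_{u^{1/2}\to q^{1/2}}$ turns the sum above into
\[ \sum_{V \in \Irr(\cH_q)} \sum_{E',E'' \in \Irr(W)} \operatorname{trace}(T_{w_C},V)\, \langle R_{E'},\chi_V\rangle\, p_{E'',E'}^*(q)\, \lambda_{E'',E}^*(q),\]
which is precisely $\beta_E^{w_C}$ by Lemma~\ref{lem52}. Since $\beta_E^w$ depends only on the conjugacy class $C$ of $w$ for $w\in\Cmin$ (this is built into the definition of $\beta_E^w$ via Corollary~\ref{cor1}(c), as $|O_{u_i}\cap B^F\dot wB^F|$ is class-function data and $w,w'\in\Cmin$ give equal values), the choice $w_C$ is immaterial and the boxed formula holds for every $w\in\Cmin$.

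For the last sentence I would simply remark that $\Xi^*$ has entries in $\Q[u^{1/2},u^{-1/2}]$, so each $\beta_E^w$ is obtained by evaluating a fixed element of $\Q[u^{1/2},u^{-1/2}]$ at $u^{1/2}=q^{1/2}$; moreover $\beta_E^w$ is manifestly an integer (it is $\pm|G^F/B^F|$ times a cardinality when $\cL_E\cong\overline{\Q}_\ell$, and in general a $\Z$-linear combination of cardinalities weighted by the integer values $Y_{\iota_E}(u_i)$ from Remark~\ref{rem55}), and integer values at infinitely many prime powers $q$ force the relevant Laurent polynomial to take integer values there — this is what is meant by ``polynomials in $q$''. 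The main obstacle is nothing deep but purely bookkeeping: one must be scrupulous about the two distinct bijections ($E\leftrightarrow E_u$ over $K$ and $V\leftrightarrow V_u$, composed to give $E\leftrightarrow E_q$) so that the column of $X(\cH)$ indexed by $E'''$ really does specialise to the character of the $\cH_q$-module $V$ indexed by the same symbol in $\Upsilon_W$, and about the symmetry of $\Lambda^*$ used to align indices with Lemma~\ref{lem52}. Once the indexing is pinned down, the proof is a one-line substitution.
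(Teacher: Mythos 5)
Your proposal is correct and matches the paper's own treatment: the paper presents Corollary~\ref{cor51} as nothing more than a matrix-form restatement of Lemma~\ref{lem52}, and your careful unpacking of the product $\Lambda^*\cdot P^*\cdot\Upsilon_W\cdot X(\cH)$ (using the symmetry of $\Lambda^*$, the bijections $E\leftrightarrow E_u\leftrightarrow E_q$, and the trace-function property guaranteeing independence of the choice of $w\in\Cmin$) is exactly the bookkeeping that justifies it. The concluding remark about entries lying in $\Q[u^{1/2},u^{-1/2}]$ is likewise what the paper intends by ``polynomials in $q$''.
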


\begin{rem} \label{finrem} \rm The advantage of working with $\beta_E^w$
is that then we obtain a true expression in terms of polynomials in $q$,
as above. (In the original setting of \cite[1.2]{Lu10}, one has to 
distinguish congruence classes of $q$ modulo $3$ in type $E_8$.)
\end{rem}

Following Lusztig \cite[1.2]{Lu10}, we are now in a position to write a 
computer program for computing $\beta_E^w$ and, hence, the cardinalities 
$|(\cO\cap B\dot{w}B)^F|$. Note that:
\begin{itemize}
\item The explicit knowledge of the Springer correspondence (see 
Remark~\ref{remspr}) can be turned into a GAP/CHEVIE program which,
given any $G$, determines the partition $\Irr(W)=\cI_1^* \sqcup \ldots 
\sqcup \cI_r^*$ and the numbers $b_1^*\geq\ldots\geq b_r^*$ required 
for running the algorithm in Lemma~\ref{lem51}. (L\"ubeck 
\cite{luebeck} provides an electronic library of tables of Green functions.) 
\item The character tables of $\cH$ are known for all types of $W$; 
see Chapters~10 and~11 of \cite{gepf}. For any given $W$, they are 
explicitly available in GAP through an already existing CHEVIE function. 
\item The Fourier matrices $\Upsilon_W$ are explicitly known by 
\cite{Lusztig80}, \cite{LuBook}. They are available in GAP
through Michel's \cite{jmich} development version of CHEVIE.
\end{itemize}
It then remains to combine all these various pieces (data and algorithms) 
into a GAP program for determining $\beta_E^w$. In this way, the 
verification of Theorem~\ref{thm2} for a given $G$ is reduced to a purely 
mechanical computation. 

\begin{rem} \label{lusrem} \rm Using the methods described above,
Lusztig \cite[1.2]{Lu10} has verified Theorem~\ref{thm2} for $G$ of
exceptional type; as remarked in \cite[4.8]{Lu11}, this works both in
good and in bad characteristic. The computations also yield the 
following property of the entries of the matrix $\Xi^*$. Let $C\in 
\Cl(W)$ be cuspidal; let $\cO$ be a unipotent class in $G$ and $E 
\in \Irr(W)$ be such that $\iota_E=(\cO, \overline{\Q}_\ell)$. Then 
we have:
\begin{itemize}
\item[(a)] The $(E,C)$-entry $\Xi^*_{E,C}$ is divisible by $D_W$; recall
that we defined $D_W=u^{l(w_0)}(u-1)^{|S|}\sum_{w \in W} u^{l(w)}$ (hence,
we have $D_w(q)=G_{\text{ad}}^F$).
\item[(b)] If $\cO=\cO_C$, then the constant term of the polynomial 
$\Xi^*_{E,C}/D_W$ is $1$; otherwise, the constant term is $0$.
\end{itemize}
In fact, the further results in \cite{Lu10}, \cite{Lu11} provide a 
general proof of (a), (b), assuming that $\cO=\cO_C$. See 
\cite[4.4(a)]{Lu11} for an explicit formula for $\Xi_{E,C}^*$ in this case.
\end{rem}

\begin{table}[htbp] \caption{The numbers $|G^F/B^F|^{-1}\beta_E^w$
for $G=\text{Sp}_4(\overline{\F}_p)$} \label{tab3} \begin{center}
$\renewcommand{\arraystretch}{1.2} \begin{array}{cccccc} \hline 
p \neq 2&\cO_{(1111)} &  \cO_{(211)} & \cO_{(22)}, \cL \not\cong 
\overline{\Q}_\ell & \cO_{(22)} & \cO_{(4)} \\ \hline
1 &   1 & q^2 - 1 & q^3 - q & 2q^3 - 2q^2 & q^4 - 2q^3 + q^2 \\
t &   0 & q^3 - q^2 & 0 & q^4 - 2q^3 + q^2 & q^5 - 2q^4 + q^3 \\
stst& 0 & 0 & 0 & q^6 - 2q^5 + q^4 & q^8 - 2q^7 + q^6  \\ 
s &   0 & 0 & q^4 - q^3 & q^4 - q^3 & q^5 - 2q^4 + q^3 \\
st&   0 & 0 & 0 & 0 & q^6 - 2q^5 + q^4 \\
\hline &&&&&\\ \hline
p=2 &\cO_{(1111)} & \cO_{(211)} & \cO_{(22)}^* &\cO_{(22)} & \cO_{(4)}\\
\hline
1   & 1 & q^2 - 1 & q^2 - 1 & 2q^3 - 3q^2 + 1 & q^4 - 2q^3 + q^2 \\
t   & 0 & q^3 - q^2 & 0 & q^4 - 2q^3 + q^2 & q^5 - 2q^4 + q^3 \\
stst&0 & 0 & 0 & q^6 - 2q^5 + q^4 & q^8 - 2q^7 + q^6  \\ 
s   & 0 & 0 & q^3 - q^2 & q^4 - 2q^3 + q^2 & q^5 - 2q^4 + q^3 \\
st  &0 & 0 & 0 & 0 & q^6 - 2q^5 + q^4 \\
\hline \end{array}$
\end{center}
\end{table}

We illustrate all this with our usual example $G=\mbox{Sp}_4
(\overline{\F}_p)$. Recall that we write $\Irr(W)=\{\sgn, \sgn_2,
\sgn_1,\sigma,1_W\}$. Then $\Upsilon_W$ is given by 
\[ \Upsilon=\frac{1}{2}\left[\begin{array}{crrrc} 1 & 0 & 0 & 0 & 0 \\
0 &  1 & {-}1 & 1 & 0 \\ 
0 & {-}1 &  1 & 1 & 0 \\ 
0 &  1 &  1 & 1 & 0 \\ 
0 & 0 & 0 & 0 & 1 \end{array}\right]\] 
(where the rows and columns are labelled by $\Irr(W)$ as specified above). 
All the remaining pieces of information are already contained in the
examples considered earlier; see Example~\ref{exp2} for the character 
table $X(\cH)$. The results are contained in Table~\ref{tab3}. First 
of all note that this is, of course, consistent with the computations 
in Example~\ref{exp2}. Furthermore, the entries in the rows corresponding
to the two cuspidal classes (with representatives $st$ and $stst$) are 
divisible by $|B^F|$, which implies that the properties (a) and (b) 
in Remark~\ref{lusrem} hold. 

\begin{rem} \label{finh} \em Finally, we wish to state a conjecture 
concerning a general finite Coxeter group $W$. We place ourselves in 
the setting of Conjecture~\ref{conj53} where $P,\Lambda$ are computed 
with respect to the partition of $\Irr(W)$ into Lusztig's families, using
the equal parameter weight function $L_0$. We form again the matrix 
\[ \Xi:=\Lambda \cdot P \cdot \Upsilon_W \cdot X(\cH) \qquad
\mbox{(with entries in $\Q(u^{1/2})$)}.\]
Analogues of the Fourier matrix $\Upsilon_W$ for $W$ of type $I_2(m)$, 
$H_3$ and $H_4$ have been constructed by Lusztig \cite[\S 3]{Luexo}, 
Brou\'e--Malle \cite[7.3]{brma} and Malle \cite{maexo}, respectively. 

Now let $C \in \Cl(W)$. Then we conjecture that there is a unique family
of $\Irr(W)$, denoted by $\cF_C$, with the following properties:
\begin{itemize}
\item[(a)] {\em For some $w \in \Cmin$ and some $E \in \cF$, the 
$(E,C)$-entry of $\Xi$ is non-zero.}
\item[(b)] {\em For any $w' \in \Cmin$ and any $E' \in \Irr(W)$, we have 
that the $(E',C)$-entry is zero unless $\cF_C \preceq_{L_0} \cF'$, where 
$\cF'$ is the family containing $E'$.}
\end{itemize}
(Here, $\preceq_{L_0}$ is the partial order as in Definition~\ref{mydef}.)
Furthermore, we expect that the assignment $C \mapsto \cF_C$ defines a 
surjective map from $\Cl(W)$ to the set of families of $\Irr(W)$. In
particular, we would obtain a natural partition of $\Cl(W)$ into
pieces which are indexed by the families of $\Irr(W)$; a similar idea 
has been formulated by Lusztig \cite[1.4]{Lu09} (for $W$ of 
crystallographic type).

For example, if $W \cong \fS_n$ (type $A_{n-1}$), then the above conjecture 
is equivalent to Theorem~\ref{thm2}. In this case, all families are 
singleton sets; furthermore, both $\Cl(W)$ and $\Irr(W)$ are naturally 
parametrised by the partitions of $n$. The map $C \mapsto \cF_C$ is given 
by sending the conjugacy class of $W$ consisting of elements of cycle type 
$\lambda \vdash n$ to the family consisting of the irreducible 
representation labelled by $\lambda$. 

Using the computational methods described above, one can check that the
conjecture holds for all $W$ of exceptional type. The resulting maps 
$C\mapsto\cF_C$ for types $H_3$, $H_4$ are described in Table~\ref{tab4}, 
where we use the following conventions. In the first column, a family 
$\cF$ is specified by the unique ``special'' representation in $\cF$; see 
\cite[App.~C]{gepf}. A non-cuspidal class $C\in \Cl(W)$ is specified as 
$(w)$ where $w \in \Cmin$. Representatives for cuspidal classes have 
already been described in Table~\ref{tab1}; so, here, $\# n$ refers to 
the class with number $n$ in that table.
\end{rem}

\begin{table}[htbp] \caption{The map from $\Cl(W)$ to families in types
$H_3$ and $H_4$} \label{tab4} \begin{center}
$\begin{array}{ccl} \hline H_3 & C \mbox{ such that } \cF_C=\cF\\ \hline
1_r' & e \\ 3_s  & (1), (1212), w_0 \\ 5_r' & (13) \\ 
4_r' & (23), \#9 \\ 5_r  & (12) \\ 3_s' &  \#8 \\ 1_r  & \#6 \\ 
\hline & \\ & \\ & \\ & \\ & \\ & \\ & \end{array}\quad 
\begin{array}{ccl} \hline H_4 & C \mbox{ such that } \cF_C=\cF\\ \hline
1_r' & e \\ 
4_t' & (1), (1212), (121213212132123), w_0 \\ 
9_s' & (13), (12124), \#33 \\ 
16_{r}' & (23), (121232123), \#30, \#31, \#32 \\ 
25_r' & (134) \\ 36_{rr}' & (12) \\ 
\overline{24}_s & \left\{\begin{array}{l} (124), (243), (12123), \#19, 
\#21, \#22, \\ \#23, \#24, \#25, \#26, \#27, \#28, \#29 \end{array}\right.\\ 
36_{rr} & (123) \\ 25_r  & \#18 \\ 16_{rr} & \#17 \\ 9_s &  \#15 \\ 
4_t &  \#14 \\ 1_r & \#11 \\ \hline \end{array}$
\end{center}
\end{table}

 
\end{document}